% date this file last revised
    % version of alggeom.cls documented

\documentclass{alggeom}
\usepackage{mathtools, microtype, mathrsfs, xfrac, amssymb, enumerate, xspace, stmaryrd, amsthm}
\usepackage[alphabetic]{amsrefs}
\usepackage{ascii}
\usepackage[latin1]{inputenc}
\usepackage{tikz-cd}
\usepackage[all,cmtip]{xy}
\usepackage{hyperref}
\hypersetup{
	colorlinks=true,
	linkcolor=blue,
	filecolor=magenta,      
	urlcolor=cyan,
}
\usepackage{faktor}
\usepackage{cleveref}
\usepackage{comment}
\usepackage{graphicx}
\usepackage{aliascnt}
\usepackage{float}
%%%%%%%%%%%%%%%%%%%%%%%%%%%%%%%%%%%%%%%%%

\numberwithin{equation}{section}

\setcounter{tocdepth}{1}

\numberwithin{subsection}{section}

\allowdisplaybreaks[1]

%%%%%%%%%%%%%%%%%%%%%%%%%%%%%%%%%%%%%%%%%

\newtheorem{theorem}{Theorem}[section]
\newtheorem{proposition}[theorem]{Proposition}
\newtheorem{proposition-definition}[theorem]
{Proposition-Definition}
\newtheorem{corollary}[theorem]{Corollary}
\newtheorem{lemma}[theorem]{Lemma}

\theoremstyle{definition}
\newtheorem{definition}[theorem]{Definition}

\newtheorem{example}[theorem]{Example}

\newtheorem{remark}[theorem]{Remark}

\theoremstyle{remark}

%%%%%%%%%%%%%%%%%%%%%%%%%%%%%%%%%%%%%%%%%

%%%%%%%%%%%%%  FONTS %%%%%%%%%%%%%%%%%%%%%%%%%%%

%\renewcommand{\mathcal}{\mathscr}

\newcommand\cA{\mathcal{A}} 
\newcommand\cC{\mathcal{C}} 
 
 \newcommand\cH{\mathcal{H}}
 
 \newcommand\cL{\mathcal{L}}
\newcommand\cM{\mathcal{M}} 
\newcommand\cO{\mathcal{O}} \newcommand\cP{\mathcal{P}}

 \newcommand\cX{\mathcal{X}}
\newcommand\cY{\mathcal{Y}} \newcommand\cZ{\mathcal{Z}}

\renewcommand\AA{\mathbb{A}} 
\newcommand\CC{\mathbb{C}}  
\newcommand\GG{\mathbb{G}} \newcommand\HH{\mathbb{H}}

 \newcommand\PP{\mathbb{P}}

 \newcommand\VV{\mathbb{V}}
 
 \newcommand\ZZ{\mathbb{Z}}

%%%%%%%%%%%%    OPERATORS (IM, KER, SPEC, ETC.) AND ARROWS    %%%%%%%%%%%%%%%%%%%%%%%%%%%%

\newcommand\arr{\ifinner\to\else\longrightarrow\fi}

\newcommand\arrto{\ifinner\mapsto\else\longmapsto\fi}

\newcommand\larr{\longrightarrow}

\newcommand{\hooklongrightarrow}{\lhook\joinrel\longrightarrow}

\renewcommand\H{\operatorname{H}}

\newcommand\into{\hookrightarrow}

\newcommand\im[1]{\operatorname{im}(#1)}

\def\displaytimes_#1{\mathrel{\mathop{\times}\limits_{#1}}}

\def\displayotimes_#1{\mathrel{\mathop{\bigotimes}\limits_{#1}}}

\renewcommand\hom{\operatorname{Hom}}

\newcommand\aut{\operatorname{Aut}}

\newcommand\spec{\operatorname{Spec}}

\newcommand\codim{\operatorname{codim}}

\newcommand\id{\mathrm{id}}

%these define dashes that will not break,
%but allow the next word to be hyphenated

\newlength{\ignora}

\renewcommand{\setminus}{\smallsetminus}

%%%%%%%%%%%%%% GROUPS %%%%%%%%%%%%%%%

\newcommand{\mmu}{\boldsymbol{\mu}}

\newcommand{\GL}{\mathrm{GL}}

\newcommand{\PGL}{\mathrm{PGL}}

%The next few lines define a \widecheck command%
\DeclareFontFamily{U}{mathx}{\hyphenchar\font45}
\DeclareFontShape{U}{mathx}{m}{n}{
	<5> <6> <7> <8> <9> <10>
	<10.95> <12> <14.4> <17.28> <20.74> <24.88>
	mathx10
}{}
\DeclareSymbolFont{mathx}{U}{mathx}{m}{n}
\DeclareFontSubstitution{U}{mathx}{m}{n}
\DeclareMathAccent{\widecheck}{0}{mathx}{"71}
\DeclareMathAccent{\wideparen}{0}{mathx}{"75}

\renewcommand{\epsilon}{\varepsilon}

%%%%%%%%%%%%%  SPECIFIC MACROS  %%%%%%%%%%%%%%%%%%%%

\newcommand{\Mbar}{\overline{\cM}}

\newcommand{\Mtilde}{\widetilde{\mathcal M}}

\newcommand{\Htilde}{\widetilde{\cH}}

\newcommand{\Hbar}{\overline{\cH}}

%%%%%%%%%%% ANGELO'S MACROS %%%%%%%%%%%%%%%%%%%%%

\newcommand{\mt}{\widetilde{\mathcal M}}

%%%%%%%%%%%%%%%%%%%%%%%%%%%%%%%%%%%%%%%%%%%

\begin{document}

\title{Hyperelliptic $A_r$-stable curves \\(and their moduli stack)}
\author{Michele Pernice}
\email{mpernice(at)kth.se}
\address{KTH, Room 1642, Lindstedtsv\"agen 25,
114 28, Stockholm }

\classification{14H10 (primary), 14H20 (secondary)}
\keywords{Moduli space of curves, Hyperelliptic curves, $A_r$-singularities}

\begin{abstract}
	This paper is the second in a series of four papers aiming to describe the (almost integral) Chow ring of $\Mbar_3$, the moduli stack of stable curves of genus $3$. In this paper, we introduce the moduli stack $\Htilde_g^r$ of hyperelliptic $A_r$-stable curves and generalize the theory of hyperelliptic stable curves to hyperelliptic $A_r$-stable curves. In particular, we prove that $\Htilde_g^r$ is a smooth algebraic stacks which can be described using cyclic covers of twisted curves of genus $0$ and it embeds in $\Mtilde_g^r$ (the moduli stack of $A_r$-stable curves) as the closure of the moduli stack of smooth hyperelliptic curves.
\end{abstract}

\maketitle
\section*{Introduction}

The geometry of the moduli spaces of curves has always been the subject of intensive investigations, because of its manifold implications, for instance in the study of families of curves. One of the main aspects of this investigation is the intersection theory of these spaces, which can used to solve either geometric, enumerative or arithmetic problems regarding families of curves. In his groundbreaking paper \cite{Mum}, Mumford introduced the intersection theory with rational coefficients for the moduli spaces of stable curves. After almost two decades, Edidin and Graham introduced in \cite{EdGra} the intersection theory of global quotient stacks (therefore in particular for moduli stacks of stable curves) with integer coefficients.

To date, several computations have been carried out. While the rational Chow ring of $\cM_g$, the moduli space of smooth curves, is known for $2\leq g\leq 9$ (\cite{Mum}, \cite{Fab}, \cite{Iza}, \cite{PenVak}, \cite{CanLar}), the complete description of the rational Chow ring of $\Mbar_g$, the moduli space of stable curves, has been obtained only for genus $2$ by Mumford and for genus $3$ by Faber in \cite{Fab}. As expected, the integral Chow ring is even harder to compute: the only complete description of the integral Chow ring of the moduli stack of stable curves is the case of $\Mbar_2$, obtained by Larson in \cite{Lar} and subsequently with a different strategy by Di Lorenzo and Vistoli in \cite{DiLorVis}. It is also worth mentioning the result of Di Lorenzo, Pernice and Vistoli regarding the integral Chow ring of $\Mbar_{2,1}$, see \cite{DiLorPerVis}.

The aim of this series of four papers is to describe the Chow ring with $\ZZ[1/6]$-coefficients of the moduli stack $\Mbar_3$ of stable genus $3$ curves. This provides a refinement of the result of Faber with a completely indipendent method. The approach is a generalization of the one used in \cite{DiLorPerVis}: we introduce an Artin stack, which is called the stack of $A_r$-stable curves, where we allow curves with $A_r$-singularities to appear. The idea is to compute the Chow ring of this newly introduced stack in the genus $3$ case and then, using localization sequence, find a description for the Chow ring of $\Mbar_3$. The stack $\Mtilde_{g,n}$ introduced in \cite{DiLorPerVis} is cointained as an open substack inside our stack. 

\subsection*{Outline of the paper}

This is the second paper in the series. It focuses on studying the moduli stack $\Htilde_g^r$ of hyperelliptic $A_r$-stable curves, which is one of the strata considered for the computation of the Chow ring of $\Mtilde_g^r$ for $g=3$. In this paper, we introduce $\Htilde_g^r$, prove that it is a smooth closed substack of $\Mtilde_g^r$ and describe it in terms of cyclic covers of twisted genus $0$ curves. This description is fundamental for the computation of the Chow ring.

Specifically, \Cref{sec:1} is dedicated to studying the possible involutions (and relative quotients) of the complete local ring of an  $A_r$-singularity. It focuses on understanding what kinds of singularities appear in the geometric quotients, whether the quotient morphism is flat and describing the fixed locus of the involution.

In \Cref{sec:2}, we introduce the moduli stack $\Htilde_g^r$ of hyperelliptic $A_r$-stable curves of genus $g$ generalizing the definition for the stable case, prove that it is a smooth stack and that it contains the stack of smooth hyperelliptic curves of genus $g$ as a dense open substack. To do this, we give an alternative description of $\Htilde_g^r$ as the moduli stack of cyclic covers of degree $2$ over twisted curves of genus $0$. This description is one of the main reasons why we choose $A_r$-singularities for our computation as they appear naturally in the case of ramified branching divisor.

 Finally, in \Cref{sec:3} we prove the main theorem of the paper, namely that the natural morphism $$\eta:\Htilde_g^r \rightarrow \Mtilde_g^r.$$  is a closed immersion of algebraic stacks as we expect from the stable case. The proof is long and uses several different tools.
 For the injectivity on geometric points, we study the combinatorics of hyperelliptic curves to prove the unicity of the hyperelliptic involution. The unramifiedness of $\eta$ follows from the unicity of the hyperelliptic involution also for infinitesimal deformations. Properness is proved by studying degenerations of families of $A_r$-curves. In particular we prove hyperelliptic involutions lift from the generic fibers of families of $A_r$-stable curves over DVR's. We want to remark that it is \emph{not} another proof of the properness of the map $\Hbar_g \rightarrow \Mbar_g$ because we use the result for the classical stable case to prove this more general result. Specifically, we only use it in \Cref{lem:not-poss-comp}. We are interested in finding a proof that does not rely on the stable case.

\section{$A_r$-singularities and involutions}\label{sec:1}

In this section we study the possible involutions acting on a singularity of type $A_r$.

Let $r$ be a nonnegative integer and $k$ be an algebraically closed field with characteristic different from $r+1$ and $2$. The complete local $k$-algebra  $$A_r:=k[[x,y]]/(y^2-x^{r+1})$$
is called an $A_r$-singularity. By definition, $A_0$ is a regular ring.		

Suppose we have an involution $\sigma$ of $A_r$. Because the normalization of a noetherian ring is universal among dominant morphism from normal rings, we know the involution lifts to the normalization of $A_r$.

\begin{remark}\label{rem:norm}
	We recall the description of the normalization of $A_r$: 
\begin{itemize}
	\item if $r$ is even, then the normalization is the morphism
	$$ \iota: A_r \arr k[[t]] $$ 
	defined by the associations $x\mapsto t^2$ and $y\mapsto t^{r+1}$;
	 
	\item if $r$ is odd, then the normalization is the morphism 
	$$ \iota: A_r \arr k[[t]]\oplus k[[t]]$$ 
	defined by the associations $x\mapsto (t,t)$ and $y \mapsto (t^{\frac{r+1}{2}}, -t^{\frac{r+1}{2}})$.
\end{itemize}

In the even case, we are identifying $A_r$ to the subalgebra of $k[[t]]$ of power series with only even degrees up to degree $r+1$. In the odd case, we are identifying $A_r$ to the subalgebra of $k[[t]]\oplus k[[t]]$ of pairs of power series which coincide up to degree $(r+1)/2$.
\end{remark}

If $\sigma$ is an involution of $k[[t]]$, we know that the differential $d\sigma$ is an involution of $k$ as a vector space over itself, therefore there exists $\xi_{\sigma} \in k$ such that $d\sigma=\xi_{\sigma} \id$ with $\xi_{\sigma}^2=1$.
We define an endomorphism $\phi_{\sigma}$ of $k[[t]]$ by the association $t\mapsto (t+\xi_{\sigma}\sigma(t))/2$. 

\begin{lemma}
	In the setting above, we have that $\phi_{\sigma}$ is an automorphism and $\sigma':=\phi_{\sigma}^{-1}\sigma\phi_{\sigma}$ is the involution of $k[[t]]$ defined by the association $t\mapsto \xi_{\sigma} t$.
\end{lemma}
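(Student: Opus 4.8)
The plan is to deal with the two assertions in turn, both being short formal computations. First I would check that $\phi_\sigma$ is a well-defined $k$-algebra endomorphism of $k[[t]]$, which amounts to verifying that $\phi_\sigma(t)=(t+\xi_\sigma\sigma(t))/2$ lies in the maximal ideal $\frkm=(t)$: this holds because $\sigma(\frkm)\subseteq\frkm$ (already implicit in the very definition of $d\sigma$), and we may divide by $2$ since $\cha k\neq 2$. To see that $\phi_\sigma$ is an automorphism, I would compute the linear term of $\phi_\sigma(t)$. By definition of $\xi_\sigma$, the induced map $d\sigma$ on $\frkm/\frkm^2$ is multiplication by $\xi_\sigma$, so $\sigma(t)\equiv\xi_\sigma t\pmod{\frkm^2}$, and hence
\[
\phi_\sigma(t)\equiv\frac{t+\xi_\sigma\cdot\xi_\sigma t}{2}=t\pmod{\frkm^2},
\]
using $\xi_\sigma^2=1$. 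A $k$-algebra endomorphism of $k[[t]]$ sending $t$ to an element of the form $t+(\text{higher order terms})$ is an automorphism — it induces the identity on $\frkm/\frkm^2$, hence on each $\frkm^n/\frkm^{n+1}$, and one builds the inverse by successive approximation (the formal inverse function theorem). Thus $\phi_\sigma$ is an automorphism.

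For the second assertion, since every $k$-algebra endomorphism of $k[[t]]$ is determined by the image of $t$, it suffices to show $\sigma'(t)=\xi_\sigma t$, equivalently $\sigma(\phi_\sigma(t))=\xi_\sigma\,\phi_\sigma(t)$ (then apply the $k$-linear map $\phi_\sigma^{-1}$). This is a direct calculation: using that $\sigma$ is a ring homomorphism with $\sigma\circ\sigma=\id$,
\[
\sigma(\phi_\sigma(t))=\frac{\sigma(t)+\xi_\sigma\,\sigma(\sigma(t))}{2}=\frac{\sigma(t)+\xi_\sigma t}{2},
\]
while, using $\xi_\sigma^2=1$,
\[
\xi_\sigma\,\phi_\sigma(t)=\frac{\xi_\sigma t+\xi_\sigma^2\,\sigma(t)}{2}=\frac{\xi_\sigma t+\sigma(t)}{2},
\]
and the two expressions agree. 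Applying $\phi_\sigma^{-1}$ yields $\sigma'(t)=\phi_\sigma^{-1}(\xi_\sigma\,\phi_\sigma(t))=\xi_\sigma t$, as claimed. Finally, $t\mapsto\xi_\sigma t$ is indeed an involution since $\xi_\sigma^2=1$, which is consistent with $\sigma'$ being conjugate to $\sigma$.

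There is no real obstacle here; the computation is essentially a one-liner once the setup is in place. The only points that deserve a word of care are the hypothesis $\cha k\neq 2$ (needed to divide by $2$ in the definition of $\phi_\sigma$), the fact that $\sigma$ preserves the maximal ideal so that $\phi_\sigma$ genuinely lands in the power series ring, and the standard fact that a substitution $t\mapsto t+O(t^2)$ defines an automorphism of $k[[t]]$, which I would either cite or dispatch in a line by the associated-graded argument above.
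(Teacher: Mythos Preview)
Your proof is correct and follows essentially the same approach as the paper: the paper's proof simply observes that $d\phi_\sigma=\id$ (which is exactly your linear-term computation) to conclude $\phi_\sigma$ is an automorphism, and dismisses the second assertion as a ``straightforward computation'' --- precisely the one you carry out. You have merely unpacked in detail what the paper leaves implicit.
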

\begin{proof}
	The fact that $d\phi_{\sigma}=\id$ implies $\phi_{\sigma}$ is an automorphism. The second statement follows from a straightforward computation.
\end{proof}

The idea is to prove the above lemma also for the algebra $A_r$ using the morphism $\phi_{\sigma}$. In fact we prove that in the even case the automorphism $\phi_{\sigma}$ restricts to an automorphism of $A_r$. Similarly, in the odd case we can construct an automorphism of $k[[t]]\oplus k[[t]]$, prove that it restricts to an automorphism of the subalgebra $A_r$ and describe explicitly the conjugation of the involution by this automorphism.

 \begin{proposition}\label{prop:descr-inv}
 	Every non-trivial involution of $A_r$ is one of the following: 
 	\begin{itemize}
 		\item[$(a)$] if $r$ is even, $\sigma:k[[x,y]]/(y^2-x^{r+1}) \arr k[[x,y]]/(y^2-x^{r+1})$ is defined by the associations $x\mapsto x$ and $y\mapsto -y$;	
 		\item[$(b)$] if $r$ is odd and $r\geq 3$, we get that $\sigma:k[[x,y]]/(y^2-x^{r+1}) \arr k[[x,y]]/(y^2-x^{r+1})$ is defined by one of the following associations:
 		\begin{itemize}
 			\item[$(b_1)$] $x\mapsto x$ and $ y \mapsto -y$,
 			\item[$(b_2)$] $x\mapsto -x$ and $y \mapsto -y$,
 			\item[$(b_3)$] $x\mapsto -x$ and $y \mapsto -y$;
 		\end{itemize}
 		\item[$(c)$] if $r=1$, we get that $\sigma:k[[x,y]]/(y^2-x^2) \arr k[[x,y]]/(y^2-x^2)$ is defined by one of the following associations: 
 		\begin{itemize}
 			\item[$(c_1)$] $x\mapsto x$ and $y\mapsto -y$,
 			\item[$(c_2)$] $x\mapsto -x$ and $y\mapsto -y$,
 			\item[$(c_3)$] $x\mapsto y$ and $y\mapsto x$;
 		\end{itemize}
 	\end{itemize}
 up to conjugation by an automorphism of $A_r$. 
\end{proposition}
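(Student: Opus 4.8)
The plan is to transport the problem to the normalization $\widetilde{A_r}$, classify the involutions there by means of the Lemma above, and then show that the ``linearizing'' automorphism produced there already restricts to an automorphism of $A_r$ itself; the list is finally read off by a direct computation with linear involutions. Concretely, by functoriality of normalization $\sigma$ extends uniquely to an involution $\widetilde\sigma$ of $\widetilde{A_r}$, which by \Cref{rem:norm} is $k[[t]]$ when $r$ is even and $k[[t]]\oplus k[[t]]$ when $r$ is odd, with $A_r$ realized as the explicit subalgebra described there; likewise $\aut(A_r)$ is exactly the subgroup of $\aut(\widetilde{A_r})$ preserving $A_r$, so conjugating $\sigma$ by an automorphism of $A_r$ amounts to conjugating $\widetilde\sigma$ by an automorphism of $\widetilde{A_r}$ that preserves $A_r$.

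\emph{Even case.} Here $\widetilde\sigma$ is an involution of $k[[t]]$ with $\xi := \xi_{\widetilde\sigma}\in\{\pm1\}$. If $\xi = 1$ the Lemma gives $\phi_{\widetilde\sigma}^{-1}\widetilde\sigma\,\phi_{\widetilde\sigma} = \id$, hence $\widetilde\sigma = \id$ and $\sigma = \id$, which is excluded; so $\xi = -1$, and $\phi := \phi_{\widetilde\sigma}$ conjugates $\widetilde\sigma$ to $t\mapsto -t$. The crux is that $\phi$ preserves $A_r\subset k[[t]]$: since $A_r$ is generated by $t^2$ and $t^{r+1}$, a short computation with $\widetilde\sigma(t)^2$ shows that $\widetilde\sigma(A_r) = A_r$ is equivalent to the vanishing in $\widetilde\sigma(t)$ of the coefficients of $t^2, t^4,\dots$ up to degree $r-2$, a condition which is inherited by $\phi(t) = \tfrac12\bigl(t + \xi\,\widetilde\sigma(t)\bigr)$ and thus forces $\phi(t^2),\phi(t^{r+1})\in A_r$. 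Hence $\phi$ restricts to an automorphism of $A_r$ conjugating $\sigma$ to the involution induced by $t\mapsto -t$, which sends $x = t^2\mapsto x$ and $y = t^{r+1}\mapsto -y$: this is case $(a)$.

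\emph{Odd case.} Now $\widetilde\sigma$ is an involution of $k[[t]]\oplus k[[t]]$ and either preserves or exchanges the two factors. If it preserves them, write $\widetilde\sigma = (\sigma_1,\sigma_2)$, apply the Lemma to each $\sigma_i$, and note that preserving $A_r = \{(f,g): f\equiv g \bmod t^{(r+1)/2}\}$ constrains the differentials (for $r\geq 3$ it forces $\xi_{\sigma_1} = \xi_{\sigma_2}$); as in the even case $\phi := (\phi_{\sigma_1},\phi_{\sigma_2})$ preserves $A_r$, so conjugation reduces $\sigma$ to a sign involution $x\mapsto\pm x$, $y\mapsto\pm y$. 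If $\widetilde\sigma$ exchanges the factors, $\widetilde\sigma^2 = \id$ forces $\widetilde\sigma(f,g) = (\alpha^{-1}(g),\alpha(f))$ for some $\alpha\in\aut k[[t]]$, the condition of preserving $A_r$ bounds how far $\alpha$ is from the identity, and conjugating by an appropriate $A_r$-preserving automorphism reduces $\widetilde\sigma$ to a normalized branch-swap, whose restriction to $A_r$ supplies the remaining case of $(b)$ and, when $r = 1$ (so that the two branches may be genuinely transposed), case $(c)$. To finish, one observes that a linear involution of $A_r$ must satisfy $\sigma^2 = \id$ and preserve the relation $y^2 = x^{r+1}$; solving these directly yields exactly $(a)$, $(b)$, $(c)$, and comparing their fixed loci (equivalently, the quotient singularities) shows the listed involutions are pairwise non-conjugate.

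The step I expect to be the main obstacle is the verification that $\phi_{\widetilde\sigma}$ — and its analogue in the odd case — really restricts to an automorphism of the \emph{singular} subring $A_r\subset\widetilde{A_r}$: this is where ``$\widetilde\sigma$ preserves $A_r$'' must be converted into explicit vanishing conditions on the Taylor coefficients of $\widetilde\sigma(t)$, and those conditions shown to propagate to $\phi_{\widetilde\sigma}$. The branch-exchanging subcase for odd $r$ is the other delicate point, since one must pin down exactly which linear involution each normalized swap induces on $A_r$ and check that nothing outside the stated list appears.
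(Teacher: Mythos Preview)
Your proposal is correct and follows essentially the same approach as the paper: lift to the normalization, split into even/odd and branch-preserving/branch-exchanging subcases, and show that the linearizing automorphism $\phi_{\widetilde\sigma}$ (or its diagonal analogue $\Phi_\Sigma$) already preserves the subring $A_r$, then read off the resulting linear involutions. The paper carries out exactly the computations you anticipate --- in the even case via $\phi_\sigma(t^2)=\tfrac14\bigl(t^2+\sigma(t^2)+2t^2p(t)\bigr)$ together with $t^2p(t)\in A_r$, and in the branch-exchanging odd case via an explicit $\Phi_\Sigma$ --- so your identification of the main obstacle is spot on.
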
  

\begin{proof}

We start with the even case. We identify $\sigma$ with its lifting to the normalization of $A_r$ by abuse of notation. First of all, we know that $\sigma(t)=\xi_{\sigma}tp(t)$ where $p(t)\in k[[t]]$ with $p(0)=1$. Because $\sigma$ is induced by an involution on $A_r$, we have that $\sigma(t)^2=\sigma(t^2) \in A_r$. An easy computation shows that this implies $t^2p(t) \in A_r$.
We see that the images of the two generators of $A_r$ through the morphism $\phi_{\sigma}$  are inside $A_r$:
$$\phi_{\sigma}(t^2)=\frac{t^2+\sigma(t)^2 + 2\xi_{\sigma}t\sigma(t)}{4}= \frac{t^2+\sigma(t^2)+2t^2p(t)}{4} \in A_r$$
and  
$$ \phi_{\sigma}(t^{r+1})=t^{r+1}\frac{(1+p(t))^{r+1}}{2^{r+1}}\in t^{r+1}k[[t]] \subset A_r;$$
notice that if we compute the differential of the restriction $\phi_{\sigma}\vert_{A_r}:A_r \arr A_r$ we get an endomorphism of the tangent space of $A_r$ of the form 
$$
\begin{pmatrix}
  1  &  \star \\
  0  &    1
\end{pmatrix} 
$$ 
therefore  $\phi_{\sigma}\vert_{A_r}$ is an injective morphism with surjective differential between complete noetherian rings, thus it is an automorphism. Finally, if we define $\sigma':=\phi_{\sigma}^{-1}\sigma\phi_{\sigma}$, then $\sigma'\vert_{A_r}=(\phi_{\sigma}^{-1}\vert_{A_r})(\sigma\vert_{A_r})(\phi_{\sigma}\vert_{A_r})$ and we can describe its action on the generators:
\begin{itemize}
	\item $\sigma'(x)=\sigma'(t^2)=\xi_{\sigma}^2 t^2 = x$,
	\item $\sigma'(y)=\sigma'(t^{r+1})=\xi_{\sigma}^{r+1} t^{r+1} = \xi_{\sigma} y$
\end{itemize} 
 where we know that $\xi_{\sigma}^2=1$. We can have both $\xi_{\sigma}=1$ and $\xi_{\sigma}=-1$, although the first one is just the identity.

They same idea works for the odd case. We describe the lifting $\Sigma$ of the involution $\sigma$ of $A_r$ to the normalization $k[[t]]\oplus k[[t]]$. We have two possibilities: the involution $\sigma$ exchanges the two branches or not. This translates in the condition that $\Sigma$ exchanges the two connected component of the normalization (or not). Firstly, we consider the case where $\Sigma$ fixes the two connected components of the normalization, therefore we can describe $\Sigma:k[[t]]^{\oplus 2}\arr k[[t]]^{\oplus 2}$ as a matrix of the form
$$
\Sigma=
\begin{pmatrix}
      \sigma_1 & 0 \\ 0 & \sigma_2
\end{pmatrix}
$$
where $\sigma_1,\sigma_2$ are involutions of $k[[t]]$. Because $\Sigma$ is induced by an involution of $A_r$, we have that $(\sigma_1(t),\sigma_2(t))\in A_r$, i.e. $\sigma_1(t)=\sigma_2(t) \mod t^{\frac{r+1}{2}}$.

 We consider the automorphism $\Phi_{\Sigma}$ of $k[[t]]^{\oplus 2}$ which can described as a matrix of the form
$$
\Phi_{\Sigma}:=
\begin{pmatrix}
\phi_{\sigma_1} & 0 \\ 0 & \phi_{\sigma_2}
\end{pmatrix};
$$
 we have the following equalities:
$$
\Phi_{\Sigma}(t,t)=(\phi_{\sigma_1}(t),\phi_{\sigma_2}(t))=1/2(t+\xi_{\sigma_1}\sigma_1(t),t+\xi_{\sigma_2}\sigma_2(t)) \in A_r
$$ 
and 
$$
\Phi_{\Sigma}(t^{\frac{r+1}{2}},-t^{\frac{r+1}{2}})=
(\phi_{\sigma_1}(t)^{\frac{r+1}{2}},-\phi_{\sigma_2}(t)^{\frac{r+1}{2}}) \in A_r
$$
 and again we have that the differential of $\Phi_{\Sigma}\vert_{A_r}$ is of the form
$$
\begin{pmatrix}
1  &  \star \\
0  &    1
\end{pmatrix} 
$$ 
therefore $\Phi_{\sigma}\vert_{A_r}$ is an automorphism of $A_r$. Notice that if $r\geq 3$ we have that $\xi_{\sigma_1}$ and $\xi_{\sigma_2}$ are the same, but if $r=1$ we don't; nevertheless it is still true that $\Phi_{\Sigma}(t,t) \in A_1$. 
Again, we have proved that if the involution of $A_r$ fixes the two branches, then we have only a finite number of involutions up to conjugation. If $r\geq 3$ then we have only the involution described on generators by $x\mapsto \xi x$ and $y\mapsto \xi^{\frac{r+1}{2}} y$ where $\xi^2=1$. Conversely, if $r=1$ we get more possible involutions, as $\xi_{\sigma_1}$ and $\xi_{\sigma_2}$ can be different. Specifically, we get four of them; if we consider their action on the pair of generators $(x,y)$ of $A_r$, we get the following matrices describing the four involutions:
$$
\begin{pmatrix}
1  &  0 \\
0  &    1
\end{pmatrix} ,
\begin{pmatrix}
	-1  &  0 \\
	0  &   -1
\end{pmatrix},
\begin{pmatrix}
0  &  1 \\
1  &  0
\end{pmatrix},
\begin{pmatrix}
	0  &  -1 \\
	-1  &  0
\end{pmatrix};
$$
notice that the third matrix is the conjugate of the fourth one by the automorphism of $A_1$ defined on the pair of generators $(x,y)$ by the following matrix:
$$
\begin{pmatrix}
-1  &  0 \\
 0 &  1
\end{pmatrix}.
$$
Lastly, we consider the case when $\Sigma$ exchanges the connected components. Because it is an involution, $\Sigma$ is an automorphism of $k[[t]]^{\oplus 2}$ of the form 
$$
\begin{pmatrix}
0  &  \tilde{\sigma} \\
\tilde{\sigma}^{-1}  &    0
\end{pmatrix} 
$$
and because $\Sigma$ is induced by the involution $\sigma$ of $A_r$, we get that $\tilde{\sigma}$ is an involution of $k[[t]]/(t^{\frac{r+1}{2}})$, i.e. $\tilde{\sigma}^2(t)=t+t^{\frac{r+1}{2}}p(t)$ with $p(t)\in k[[t]]$. Notice that if $r\geq 3$ we get that $\tilde{\sigma}(t)=\xi_{\tilde{\sigma}}tp(t)$ where $p(t)\in k[[t]]$ with $p(0)=1$ and $\xi_{\tilde{\sigma}}^2=1$. On the contrary, if $r=1$ the previous condition is empty, i.e. $\tilde{\sigma}$ is any automorphism,. Let us first consider the case $r\geq 3$. Consider the endomorphism $\Phi_{\Sigma}$ of $k[[t]]^{\oplus 2}$ of the form 
$$
\Phi_{\Sigma}:=
\begin{pmatrix}
\phi_1  &  0 \\
0  &    \phi_2
\end{pmatrix} 
$$ 
where we define $\phi_1(t)=1/2(t+\xi_{\tilde{\sigma}}\tilde{\sigma}(t))$ and $\phi_2(t)=1/2(t+\xi_{\tilde{\sigma}}\tilde{\sigma}(t)+t^{\frac{r+1}{2}}p(t))$. Again, an easy computation shows that the automorphism $\Phi_{\Sigma}$ restricts to the algebra $A_r$ and it is in fact an automorphism. For the case $r=1$, we can simply consider $\Phi_{\Sigma}$ of the form 
$$
\Phi_{\Sigma}:=
\begin{pmatrix}
 \tilde{\sigma} &  0 \\
0  &    \id
\end{pmatrix} 
$$
which restricts to an automorphism of $A_r$ as well. As before, if $r\geq 3$ we have that the involution is of the form $x\mapsto \xi x$ and $y \mapsto -\xi^{\frac{r+1}{2}}y$. Instead, if $r=1$ we have the involution described by the association $ x \mapsto x$ and $y \mapsto -y$. 
	
\end{proof}

The previous corollary finally implies the description of the invariant we were looking for. Let us focus on the description of this invariant subalgebras (in the case of a non trivial involution). We prove the following statement.
 
\begin{corollary}
	Let $\sigma$ be a non-trivial involution of the algebra $A_r$ and let us denote by $A_r^{\sigma}$ the invariant subalgebra and by $i:A_r^{\sigma}\into A_r$ the inclusion. If we refer to the classification proved in \Cref{prop:descr-inv}, we have that
	\begin{itemize}
		\item[$(a)$] if $r$ is even, we have that $A_r^{\sigma} \simeq A_0$, the inclusion $i$ is faithfully flat and the fixed locus of $\sigma$ has length $r+1$ and it is a Cartier divisor;
		\item[$(b)$] if $r:=2k-1$ is odd and $k\geq 2$ we have that 
		\begin{itemize}
			\item[$(b_1)$] $A_r^{\sigma} \simeq A_0$, the inclusion $i$ is faithfully flat, the fixed locus of $\sigma$ has length $r+1$ and it is the support of a Cartier divisor;
			\item[$(b_2)$] $A_r^{\sigma} \simeq A_{k-1}$, the inclusion $i$ is faithfully flat, the fixed locus of $\sigma$ has length $2$ and it is the support of a Cartier divisor;
			\item[$(b_3)$] $A_r^{\sigma} \simeq A_k$, the inclusion $i$ is not flat, the fixed locus is of length $1$ and it is not the support of a Cartier divisor;
		\end{itemize}
		\item[$(c)$] if $r=1$, we have that
		\begin{itemize}
			\item[$(c_1)$] $A_1^{\sigma} \simeq A_0$, the inclusion $i$ is faithfully flat, the fixed locus of $\sigma$ has length $2$ and it is the support of a Cartier divisor;
			\item[$(c_2)$] $A_1^{\sigma} \simeq A_1$, the inclusion $i$ is not faithfully flat, the fixed locus of $\sigma$ has length $1$ and it is not the support of a Cartier divisor;
			\item[$(c_3)$] $A_1^{\sigma} \simeq A_1$, the inclusion $i$ is faithfully flat and the fixed locus coincides with one of two irreducible components.
		\end{itemize}
	\end{itemize}
\end{corollary}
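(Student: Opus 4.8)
The plan is to go through the classification from \Cref{prop:descr-inv} case by case, and in each case compute the invariant subalgebra $A_r^\sigma$ explicitly, then read off flatness, the length of the fixed locus, and whether the fixed locus is Cartier. Since $\cha k\neq 2$, there is the standard eigenspace decomposition $A_r = A_r^+ \oplus A_r^-$ into the $(\pm 1)$-eigenspaces of $\sigma$, with $A_r^+ = A_r^\sigma$; this is the main computational tool. The fixed locus is cut out by the ideal generated by $A_r^-$ (equivalently by $\{a-\sigma(a)\}$), so computing its colength amounts to understanding the $A_r^\sigma$-module structure of $A_r^-$, and $i$ is flat iff $A_r$ is free (equivalently flat) as an $A_r^\sigma$-module.

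The computations are most transparent via the normalization descriptions in \Cref{rem:norm}. In the even case $(a)$, $\sigma$ is $x\mapsto x,\ y\mapsto -y$, so $A_r^\sigma = k[[x]] \cong A_0$; then $A_r = k[[x]]\oplus k[[x]]\cdot y$ is free of rank $2$, giving faithful flatness, and the fixed ideal is $(y)$, with $A_r/(y) = k[[x]]/(x^{r+1})$ of length $r+1$, visibly Cartier since generated by the nonzerodivisor $y$. In the odd case $r=2k-1$: for $(b_1)$, $x\mapsto x,\ y\mapsto -y$ again gives $A_r^\sigma = k[[x]]\cong A_0$, $A_r$ free of rank $2$, fixed ideal $(y)$, colength $=r+1$; for $(b_2)$, $x\mapsto -x,\ y\mapsto -y$, the invariants are generated by $u:=x^2$ and $v:=xy$ with $v^2 = x^2 y^2 = x^2\cdot x^{r+1} = u^{k+1}$, so $A_r^\sigma \cong k[[u,v]]/(v^2-u^{k+1}) = A_{k-1}$ — wait, one must double-check the exponent: $v^2 = u^{\,k}\cdot u$? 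Since $r+1 = 2k$ and $x^2 = u$, $x^{r+1} = u^k$, so $v^2 = u\cdot u^k = u^{k+1}$, i.e. $A_{k}$; I should recompute carefully against the claimed $A_{k-1}$ and adjust either the presentation of invariants or note that $x$ itself or $y/x$ may lie in a larger subring — in fact since $r\geq 3$, $k\geq 2$, and one checks $A_r$ is free over $A_r^\sigma$ on $1, x$ (using $y = v/x$... which is not integral, so rather on $1,x$ with $y$-multiples handled via $v$), yielding flatness, fixed ideal $(x,y)$ wait no — the fixed locus of $x\mapsto -x, y\mapsto -y$ is $V(x,y)$ which has length... this needs the eigenspace computation, and the claim length $2$ suggests the fixed ideal is actually $(x)$ in the quotient ring, so $A_r/(x)\cdot A_r$ — I will sort the precise generators during write-up. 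For $(b_3)$ (note $(b_3)$ coincides with $(b_2)$ as printed, presumably a typo for $x\mapsto -x, y\mapsto y$), the invariants are $x^2$ and $y$ with $y^2 = (x^2)^{?}$... leading to $A_k$, non-flat because $A_r$ is not free over it (rank jumps), fixed locus length $1$, not Cartier. For $(c)$ with $r=1$: $(c_1)$ as in $(b_1)$; $(c_2)$ as in $(b_2)$ but now $r+1=2$ so $A_1^\sigma \cong A_1$ and one checks non-flatness directly; $(c_3)$ is $x\leftrightarrow y$, where $A_1 = k[[x,y]]/((y-x)(y+x))$ decomposes, $A_1^\sigma = k[[x+y]]$, and the fixed locus is the diagonal branch $V(x-y)$, one of the two components.

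I would organize the write-up as: first establish the general eigenspace setup and the fact that the fixed-locus ideal is $(a-\sigma a : a\in A_r)$; then treat $(a)$, $(b_1)$ — these are the "$\cong A_0$, free of rank $2$, fixed ideal principal generated by $y$" cases and can be done together; then $(b_2)$, $(c_2)$ together (sign involution $x\mapsto -x, y\mapsto -y$, invariants $k[[x^2, xy]]$); then $(b_3)$, $(c_3)$. For the flatness assertions I would use that over a complete local Noetherian ring, a finite module is flat iff free iff its fiber dimension is constant, and compute $A_r\otimes_{A_r^\sigma} k$; the non-flat cases are exactly those where this fiber has dimension $3$ rather than $2$. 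For "Cartier" I would check whether the fixed-locus ideal (in $A_r$) is generated by a single nonzerodivisor. The main obstacle I anticipate is the bookkeeping in the odd cases $(b_2)$ and $(b_3)$: getting the exponent in the presentation of $A_r^\sigma$ right (reconciling with the stated $A_{k-1}$ vs. $A_k$) and correctly identifying which ideal cuts out the fixed locus so that the stated lengths ($r+1$, $2$, $1$) come out — this requires carefully expressing a general invariant power series and its complement in the eigenspace decomposition, rather than just looking at the action on generators.
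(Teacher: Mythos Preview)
Your approach is the same as the paper's --- a direct case-by-case computation of the invariant subalgebra, the fixed ideal, and freeness --- and your general setup (eigenspace splitting, fixed ideal $=(A_r^-)$, flatness $\Leftrightarrow$ free of rank $2$) is correct.

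The one genuine problem is that you have resolved the typo in \Cref{prop:descr-inv} the wrong way around. The intended list is
\[
(b_2):\ x\mapsto -x,\ y\mapsto y,\qquad (b_3):\ x\mapsto -x,\ y\mapsto -y,
\]
not the reverse. With the correct $(b_2)$ the invariants are generated by $u=x^{2}$ and $v=y$, and from $y^{2}=x^{2k}$ one gets $v^{2}=u^{k}$, i.e.\ $A_r^{\sigma}\cong A_{k-1}$ as stated; the fixed ideal is $(x)$, which is principal and a nonzerodivisor, and $A_r/(x)\cong k[[y]]/(y^{2})$ has length $2$. Your computation giving $u=x^{2},\ v=xy,\ v^{2}=u^{k+1}$ and hence $A_k$ is \emph{correct}, but it belongs to $(b_3)$; there the fixed ideal is the maximal ideal $(x,y)$, of colength $1$ and not Cartier, and $A_r$ is not free over $A_r^{\sigma}$ (the fiber has dimension $3$). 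Once you swap the labels, all your ``reconcile with the stated $A_{k-1}$'' worries disappear and the lengths $r+1$, $2$, $1$ fall out immediately. Note also that with this correction, $(c_2)$ is the $k=1$ instance of $(b_3)$, not of $(b_2)$.

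One further slip in your sketch of $(c_3)$: the invariant subalgebra under $x\leftrightarrow y$ is not $k[[x+y]]$. Passing to coordinates $u=x+y,\ v=x-y$ (so $uv=0$ and $\sigma(u)=u,\ \sigma(v)=-v$), the invariants are $k[[u,v^{2}]]$ with the single relation $u\cdot v^{2}=0$, i.e.\ again an $A_1$; the inclusion is $u\mapsto u,\ v^{2}\mapsto v^{2}$, visibly free of rank $2$ on $\{1,v\}$, and the fixed locus is $V(v)$, one of the two branches.
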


\begin{proof}
Let us start with the even case. Thanks to \Cref{prop:descr-inv}, we have that the involution $\sigma$ of $A_r$ is defined by the associations $x\mapsto x$ and $y\mapsto -y$ (up to conjugation by an isomorphism of $A_r$). Therefore it is clear that $A_r^{\sigma} \simeq  k[[x]]$ and the quotient morphism is induced by the inclusion $$A_0 \simeq k[[x]] \subset \frac{k[[x,y]]}{(y^2-x^{r+1})} \simeq A_r;$$
 the same is true for the odd case when the involution $\sigma$ acts in the same way. In this case the algebras extension (corrisponding to the quotient morphism) is faithfully flat, the fixed locus is the support of a Cartier divisor defined by the ideal $(y)$ in $A_r$ and it has length $r+1$.

Now we consider the case $r=2k-1$ with $\sigma$ acting as follows: $\sigma(x)=-x$ and $\sigma(y)=y$. A straightforward computation shows that the invariant algebra $A_r^{\sigma}$ is of the type $A_{k-1}$. To be precise, the inclusion of the invariant subalgebra can be described by the morphism
$$ i: A_{k-1}\simeq\frac{k[[x,y]]}{(y^2-x^{k})} \hooklongrightarrow \frac{k[[x,y]]}{(y^2-x^{2k})}\simeq A_r$$ 
where $i(x)=x^2$ and $i(y)=y$. In this case we get the $A_r$ is a faithfully flat $A_{k-1}$-algebra and the fixed locus is the support of a Cartier divisor defined by the ideal $(x)$ in $A_r$ and it has length $2$.

If $\sigma$ is defined by the associations $x\mapsto -x$ and $y \mapsto -y$ and $r=2k-1$, then the invariant subalgebra $A_r^{\sigma}$ is of type $A_k$ and the quotient morphism is defined by the inclusion

$$ i: A_k\simeq \frac{k[[x,y]]}{(y^2-x^{k+1})} \hooklongrightarrow \frac{k[[x,y]]}{(y^2-x^{r+1})}\simeq A_r $$

where $i(x)=x^2$ and $i(y)=xy$. In contrast with the two previous cases, $A_r$ is not a flat $A_k$-algebra and the fixed locus is not (the support of) a Cartier divisor, it is in fact defined by the ideal $(x,y)$ in $A_r$ and it has length $1$. 

Finally, we consider the case where $r=2$ and the action of $\sigma$ is described by $x\mapsto y$ and $y \mapsto x$. If you consider the isomorphism 
$$ \frac{k[[u,v]]}{(uv)} \rightarrow  \frac{k[[x,y]]}{(y^2-x^2)}$$
defined by the associations $u\mapsto y+x$ and $v\mapsto y-x$  we get that the invariant subalgebra is defined by the inclusion 
$$ i: A_1\simeq \frac{k[[u,v]]}{(uv)} \hooklongrightarrow \frac{k[[u,v]]}{(uv)}$$ 
where $i(u)=u$ and $i(v)=v^2$. Notice that in this situation the algebras extension is flat but the fixed locus is not a Cartier divisor, as it is defined by $(v)$, which is a zero divisor in $A_1$.
\end{proof}

\begin{remark}
	If $r$ is odd and $r\geq 3$ (case $(b)$), we have that every involution gives a different quotient. The same is not true for the case $r=1$ as we can obtain the nodal singularity in two ways.
\end{remark}

\begin{remark}\label{rem:fix-locus}
 Notice that $(c_3)$ is the only case when the fixed locus is an irreducible component. This situation do not appear in the stack of hyperelliptic curves as we consider only involutions with finite fixed locus. 
\end{remark}
 
 We end this section with a technical lemma which will be useful afterwards. 
 \begin{lemma}\label{lem:local-node-involution}
 	Let $(R,m)\hookrightarrow (S,n)$ be a flat extension of noetherian complete local rings over $k$ such that  $$S\otimes_R R/m \simeq A_1.$$
 	Suppose we have an $R$-involution $\sigma$ of $S$ such that $\sigma \otimes R/m$ (seen as an involution of $A_1$) does not exchange the two irreducible components. Hence there exists a $R$-isomorphism 
 	$$ S \simeq R[[x,y]]/(xy-t) $$
 	where $t \in R$ such that $\sigma$ (seen as an involution of the right-hand side of the isomorphism) acts as follows: $\sigma(x)=\xi_2x$ and $\sigma(y)=\xi_1y$ for some $\xi_i \in k$ such that $\xi_i^2=1$ for $i=1,2$. Furthermore, if $\xi_1=-\xi_2$ we have $t=0$.

 \end{lemma}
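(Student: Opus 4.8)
The plan is to build $\sigma$-equivariant coordinates on $S$, reduce to a presentation $S=R[[X,Y]]/(g)$ in which $\bar g = XY$ and $\sigma$ acts diagonally, and then normalize $g$ to the shape $XY-t$ by a convergent sequence of equivariant coordinate changes.

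\emph{Diagonalizing the closed fibre and lifting.} Write $\bar\sigma$ for the involution induced by $\sigma$ on $S\otimes_R R/m\simeq A_1$. By \Cref{prop:descr-inv}$(c)$, together with the standard isomorphism $k[[x,y]]/(y^2-x^2)\simeq k[[u,v]]/(uv)$, after a coordinate change on the fibre we may write $S\otimes_R R/m = k[[\bar x,\bar y]]/(\bar x\bar y)$ with $\bar\sigma(\bar x)=\xi_2\bar x$ and $\bar\sigma(\bar y)=\xi_1\bar y$, where $\xi_i^2=1$; the only listed involution excluded here is the one exchanging the two components, which is ruled out by hypothesis. Lift $\bar x,\bar y$ to $x_0,y_0\in S$ and average: since $\cha k\neq 2$, the elements $x:=\tfrac12(x_0+\xi_2\sigma(x_0))$ and $y:=\tfrac12(y_0+\xi_1\sigma(y_0))$ satisfy $\sigma(x)=\xi_2 x$, $\sigma(y)=\xi_1 y$ and still reduce to $\bar x,\bar y$ modulo $mS$. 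By complete Nakayama $x,y$ generate $S$ over $R$, giving a surjection $q\colon R[[X,Y]]\to S$, $X\mapsto x$, $Y\mapsto y$, whose reduction modulo $m$ has kernel $(XY)$. Picking $g\in\ker q$ with $\bar g=XY$ (reduction mod $m$), the local criterion of flatness shows $R[[X,Y]]/(g)$ is $R$-flat; since it surjects onto $S$ and both have closed fibre $A_1$, the surjection is an isomorphism. Hence $S=R[[X,Y]]/(g)$ with $g=XY+\gamma$, $\gamma\in mR[[X,Y]]$, and $\sigma$ is induced by $\tilde\sigma\colon X\mapsto\xi_2 X$, $Y\mapsto\xi_1 Y$.

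\emph{Making $g$ an eigenvector and straightening it.} As $\sigma$ preserves $(g)$, we get $\tilde\sigma(g)=hg$ for a unit $h$ with $h\tilde\sigma(h)=1$; reducing mod $m$ and using $\bar g=XY$ forces $\bar h=\xi_1\xi_2$, so $h':=(\xi_1\xi_2)^{-1}h\in 1+mR[[X,Y]]$ with $h'\tilde\sigma(h')=1$. The binomial series defines $w:=(h')^{1/2}\in 1+mR[[X,Y]]$ ($\cha k\neq 2$, and it converges since $R[[X,Y]]$ is complete); then $\tilde\sigma(w)=w^{-1}$, whence $\tilde\sigma(wg)=\xi_1\xi_2\,(wg)$ and $\overline{wg}=XY$ (as $\bar w=1$). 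Replacing $g$ by $wg$, we may assume $\tilde\sigma(g)=\xi g$ with $\xi:=\xi_1\xi_2$. Decompose $\gamma=c+XP(X,Y)+YQ(Y)$ with $c\in m$, $P\in mR[[X,Y]]$, $Q\in mR[[Y]]$; comparing $\tilde\sigma$-weights in $g=XY+\gamma$ shows $P$ has weight $\xi_1$, $Q$ has weight $\xi_2$, and the constant $c$ has weight $1$, so $c=0$ as soon as $\xi=-1$. Setting $X_1:=X+Q$, $Y_1:=Y+P$ — again eigenvectors, of weights $\xi_2$ and $\xi_1$, and a regular system of parameters — one computes $g=X_1Y_1+(c-PQ)$ with $PQ\in m^2R[[X,Y]]$, i.e. the same shape with the ``$XP+YQ$'' part now in $m^2$. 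Iterating and using completeness of $R[[X,Y]]$, the coordinate changes converge to eigen-coordinates $x,y$ and a constant $c_\infty\in m$ with $g=xy+c_\infty$; putting $t:=-c_\infty$ yields $S\simeq R[[x,y]]/(xy-t)$ with $\sigma(x)=\xi_2 x$, $\sigma(y)=\xi_1 y$. Since at every step the accumulated constant is the constant term of a $\tilde\sigma$-eigenvector of weight $\xi$, it vanishes when $\xi=-1$; hence $t=0$ whenever $\xi_1=-\xi_2$.

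\emph{Main obstacle.} The delicate part is the last one: one has to verify that the substitution $X_1=X+Q$, $Y_1=Y+P$ really pushes the error one ideal-power deeper (the cross term $PQ$ being the mechanism), that the infinitely many substitutions compose to an honest automorphism of $R[[X,Y]]$ by completeness, and — this is exactly what gives the ``furthermore'' clause — that the whole construction respects the $\ZZ/2$-eigenspace decomposition, so the limiting coordinates stay eigenvectors and the constant is forced to $0$ when $\xi_1=-\xi_2$. The fibrewise classification, the averaging, the flatness comparison and the square-root step are all routine.
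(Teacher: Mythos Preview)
Your argument is correct and reaches the same conclusion, but the route differs from the paper's in a way worth noting. The paper never fixes a presentation of $S$; instead it works inductively in the truncations $S_n:=S\otimes_R R/m^{n+1}$: at each stage it lifts the eigen-coordinates from $S_{n-1}$, averages to restore the eigenvector property, and then corrects the product $x_n y_n$ to a constant using that the discrepancy lies in $A_1\otimes_k(m^{n+1}/m^{n+2})$, where every element is uniquely a constant plus a series in $x$ plus a series in $y$. Your approach is extrinsic: you first produce a presentation $S=R[[X,Y]]/(g)$, then force $g$ to be a $\tilde\sigma$-eigenvector by the square-root trick, and finally normalise $g$ to $XY-t$ by iterated equivariant substitutions whose error drops by one power of $m$ (in fact doubles in depth) at each step. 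What you gain is a clean weight bookkeeping that makes the ``furthermore'' clause transparent (the constant term of an eigenvector of weight $-1$ vanishes); what the paper gains is that no square-root step is needed, since working in $S$ itself the involution is already given and one never has to make the \emph{relation} equivariant. Both arguments rest on the same two ingredients --- the fibre classification of \Cref{prop:descr-inv} and completeness to pass to the limit --- so the difference is one of packaging rather than of idea.
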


 \begin{proof}
 For the sake of notation, we still denote by $\sigma$ the involution $\sigma \otimes R/m^{n+1}$. We inductively construct elements $x_n,y_n$ in $S_n:=S\otimes R/m^{n+1}$ and $t_n \in R/m^{n+1}$ such that 
 	\begin{itemize}
 		\item[1)] $\sigma(x_n)=\xi_1x_n$,
 		\item[2)] $\sigma(y_n)=\xi_2y_n$,
 		\item[3)] $x_ny_n=t_n$ in $S_n$;
 	\end{itemize}
 for some $\xi_i \in k$ indipendent of $n$ such that $\xi_i^2=1$ for $i=1,2$. The case $n=0$ follows from \Cref{prop:descr-inv} and it gives us $\xi_i$ for $i=1,2$. Suppose we have constructed $(x_n,y_n,t_n)$ with the properties listed above. 

 Consider two general liftings $x'_{n+1},y'_{n+1}$ in $S_{n+1}$. 
 
 We define $x''_{n+1}:=(x'_{n+1}+\xi_1\sigma(x'_{n+1}))/2$ and $y''_{n+1}:=(y'_{n+1}+\xi_2\sigma(y'_{n+1}))/2$. The pair $(x''_{n+1},y''_{n+1})$ clearly verify the properties 1) and 2). A priori, $x''_{n+1}y''_{n+1}=t_n+h$ with $h$ an element in $S_{n+1}$ such that its restriction in $S_n$ is zero. The flatness of the extension implies that 
 $$ \ker(S_{n+1}\rightarrow S_n)\simeq A \otimes_R (m^{n+1}/m^{n+2})\simeq S_0 \otimes_k (m^{n+1}/m^{n+2})$$ and therefore $$ h = h_0 + x''_{n+1}p(x''_{n+1}) + y''_{n+1}q(y''_{n+1})$$ 
 where all the coefficients of the polynomial $p$ and $q$ (and clearly $h_0$) are in $m^{n+1}/m^{n+2}$. if we define 
 \begin{itemize}
 	\item $t_{n+1}:=t_n+h_0$,
 	\item $x_{n+1}:=x''_{n+1}+q(y''_{n+1})$,
 	\item $y_{n+1}:=y''_{n+1}+p(x''_{n+1})$,
 \end{itemize}  
 the third condition above is verified but we have to prove that the first two are still verified for $x_{n+1}$ and $y_{n+1}$. 

 Using the fact that $\sigma(x''_{n+1}y''_{n+1})=\xi_1\xi_2x''_{n+1}y''_{n+1}$, we reduce to analyze three cases.
 
 If $\xi_1=\xi_2=1$, there is nothing to prove.
 If $\xi_1=-\xi_2$, a computation shows that   
 \begin{itemize}
 	\item $h_0=0$,
 	\item $ p\equiv 0$,
 	\item $q(y''_{n+1})=\tilde{q}(y''^2_{n+1})$
 \end{itemize}
for a suitable polynomial $\tilde{q}$ with coefficients in $m^{n+1}/m^{n+2}$. 
 If $\xi_1=\xi_2=-1$, a computation shows that   
\begin{itemize}
	\item $ p(x''_{n+1})=\tilde{p}(x''^2_{n+1})$,
	\item $q(y''_{n+1})=\tilde{q}(y''^2_{n+1})$
\end{itemize}
for a suitable $\tilde{p},\tilde{q}$ polynomials with coefficients in $m^{n+1}/m^{n+2}$. 

Hence $(x_{n+1},y_{n+1},t_{n+1})$ satisfies the conditions 1), 2) and 3),
therefore we have a morphism of flat $R/m^{n+1}$-algebras
$$ (R/m^{n+1})[[x_{n+1},y_{n+1}]]/(x_{n+1}y_{n+1}-t_{n+1}) \longrightarrow S_{n+1}$$ 
which is an isomorphism modulo $m$, therefore it is an isomorphism. If we pass to the limit we get the result.
 \end{proof}

\section{Moduli stack of $A_r$-stable hyperelliptic curves}\label{sec:2}

In this section, we briefly recall the definition of the moduli stack of $A_r$-stable curves and their moduli stack. See Section 1.2 of \cite{PerTesi} for a more detailed treatment. Moreover, we introduce the notion of hyperelliptic $A_r$-stable curve and prove that the moduli stack of hyperelliptic $A_r$-stable curves is smooth.

Fix a nonnegative integer $r$. Let $g$ be an integer with $g\geq 2$ and $n$ be a nonnegative integer.
\begin{definition}	Let $k$ be an algebraically closed field and $C/k$ be a proper reduced connected one-dimensional scheme over $k$. We say the $C$ is a \emph{$A_r$-prestable curve} if it has at most $A_r$-singularity, i.e. for every $p\in C(k)$, we have an isomorphism
		$$ \widehat{\cO}_{C,p} \simeq k[[x,y]]/(y^2-x^{h+1}) $$ 
	with $ 0\leq h\leq r$. Furthermore, we say that $C$ is $A_r$-stable if it is $A_r$-prestable and the dualizing sheaf $\omega_C$ is ample. A $n$-pointed $A_r$-stable curve over $k$ is $A_r$-prestable curve together with $n$ smooth distinct closed points $p_1,\dots,p_n$ such that $\omega_C(p_1+\dots+p_n)$ is ample.
\end{definition}
\begin{remark}
	Notice that a $A_r$-prestable curve is l.c.i by definition, therefore the dualizing complex is in fact a line bundle. 
\end{remark}

We fix a base field $\kappa$ where all the primes smaller than $r+1$ are invertible. Every time we talk about genus, we intend arithmetic genus, unless specified otherwise. We recall a useful fact.

\begin{remark}\label{rem:genus-count}
	Let $C$ be a connected, reduced, one-dimensional, proper scheme over an algebraically closed field. Let $p$ be a rational point which is a singularity of $A_r$-type. We denote by $b:\widetilde{C}\arr C$ the partial normalization at the point $p$ and by $J_b$ the conductor ideal of $b$. Then a straightforward computation shows that \begin{enumerate}
		\item if $r=2h$, then $g(C)=g(\widetilde{C})+h$;
		\item if $r=2h+1$ and $\widetilde{C}$ is connected, then $g(C)=g(\widetilde{C})+h+1$,
		\item if $r=2h+1$ and $\widetilde{C}$ is not connected, then $g(C)=g(\widetilde{C})+h$.
	\end{enumerate}
	If $\widetilde{C}$ is not connected, we say that $p$ is a separating point. Furthermore, Noether formula gives us that $b^*\omega_C \simeq \omega_{\widetilde{C}}(J_b^{\vee})$.
\end{remark}

 We can define $\Mtilde_{g,n}^r$ as the fibered category over $\kappa$-schemes  whose objects are the datum of $A_r$-stable curves over $S$ with $n$ distinct sections $p_1,\dots,p_n$ such that every geometric fiber over $S$ is a  $n$-pointed $A_r$-stable curve. These families are called \emph{$n$-pointed $A_r$-stable curves} over $S$. Morphisms are just morphisms of $n$-pointed curves.

We recall the following description of $\Mtilde_{g,n}^r$. See Theorem 1.2.7 of \cite{PerTesi} for the proof of the result. 

\begin{theorem}\label{theo:descr-quot}
	$\Mtilde_{g,n}^r$ is a smooth connected algebraic stack of finite type over $\kappa$. Furthermore, it is a quotient stack: that is, there exists a smooth quasi-projective scheme X with an action of $\GL_N$ for some positive $N$, such that 
	$ \Mtilde_{g,n}^r \simeq [X/\GL_N]$.
\end{theorem}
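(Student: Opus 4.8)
The plan is to run the classical Gieseker--Deligne--Mumford argument: embed all $n$-pointed $A_r$-stable curves uniformly by a fixed pluricanonical system, realize the resulting parameter space as a locally closed subscheme $H$ of a Hilbert scheme, present $\Mtilde_{g,n}^r$ as a quotient of $H$ by a linear group, and finally read off smoothness and connectedness from deformation theory and from the density of the smooth locus. The first ingredient is a uniform boundedness statement: there is an integer $m=m(g,n,r)\gg 0$ such that for every $n$-pointed $A_r$-stable curve $(C,p_1,\dots,p_n)$ over an algebraically closed field, the line bundle $L:=\omega_C(p_1+\dots+p_n)^{\otimes m}$ is very ample, satisfies $H^1(C,L)=0$, and $h^0(C,L)=(2g-2+n)m-g+1=:N+1$. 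Since an $A_r$-stable curve is l.c.i., hence Gorenstein, and $\omega_C(\sum p_i)$ is ample by hypothesis, this is a uniform effective very-ampleness statement for powers of an ample line bundle on a Gorenstein curve whose singularities are bounded (by $r$); one gets it by pulling back to the normalization $b\colon\widetilde C\to C$, using $b^*\omega_C\simeq\omega_{\widetilde C}(J_b^\vee)$ from \Cref{rem:genus-count}, and bounding, in terms of $r$ alone, the $\delta$-invariants and the failure of very-ampleness at the singular points. This fixes the Hilbert polynomial $P(t)=(2g-2+n)mt-g+1$.

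Next, inside $\operatorname{Hilb}^{P}(\PP^N)\times(\PP^N)^n$ I would define $H$ to be the locus of tuples $(C\subset\PP^N,p_1,\dots,p_n)$ such that $C$ is an $A_r$-stable curve, the $p_i$ are distinct smooth points of $C$, and the embedding is the $m$-canonical one, i.e.\ $\mathcal O_{\PP^N}(1)|_C\cong\omega_C(\textstyle\sum p_i)^{\otimes m}$ and $H^0(\mathcal O_{\PP^N}(1))\xrightarrow{\ \sim\ }H^0(C,\mathcal O_C(1))$. Each of these conditions cuts out a locally closed subfunctor of $\operatorname{Hilb}^P(\PP^N)\times(\PP^N)^n$: flatness and $P$ are built in; ``l.c.i.\ with connected reduced geometric fibres'' is open; ``having at worst $A_r$-singularities on the fibres'' is open (this requires a short argument --- upper-semicontinuity of a suitable local invariant, or a comparison with the standard miniversal family $y^2=x^{r+1}+a_{r-1}x^{r-1}+\dots+a_0$); ampleness of the relative $\omega(\sum p_i)$ is open; and the pluricanonical/balanced condition is locally closed by cohomology and base change. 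Hence $H$ is a quasi-projective $\kappa$-scheme of finite type carrying a universal $n$-pointed $A_r$-stable curve, which induces a morphism $H\to\Mtilde_{g,n}^r$. Since any isomorphism between two such $m$-canonically embedded pointed curves is induced by a projective-linear transformation (both embeddings use the complete system $|\omega(\sum p_i)^{\otimes m}|$), this morphism exhibits $\Mtilde_{g,n}^r\cong[H/\PGL_{N+1}]$; the standard associated-bundle construction along a faithful representation $\PGL_{N+1}\hookrightarrow\GL_{(N+1)^2}$ then rewrites this as $[X/\GL_M]$ with $X=H\times^{\PGL_{N+1}}\GL_{(N+1)^2}$ quasi-projective and $M=(N+1)^2$. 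In particular $\Mtilde_{g,n}^r$ is an algebraic stack of finite type over $\kappa$.

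It remains to check smoothness and connectedness. For smoothness it suffices that the deformations of an $A_r$-stable curve are unobstructed, i.e.\ $\ext^2_C(\LL_C,\mathcal O_C)=0$: in the local-to-global spectral sequence the total-degree-$2$ contributions are $H^2(\mathcal Hom(\LL_C,\mathcal O_C))$, which vanishes since $C$ is one-dimensional, and $H^1(\mathcal Ext^1(\LL_C,\mathcal O_C))$, which vanishes since $\mathcal Ext^1(\LL_C,\mathcal O_C)$ is supported on the finite singular locus (and $\mathcal Ext^{\geq 2}(\LL_C,-)=0$ because $\LL_C$ is perfect of amplitude $[-1,0]$); the presence of the smooth marked points does not affect this. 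Hence $H$, and therefore $\Mtilde_{g,n}^r$, is smooth. For connectedness: the miniversal deformation space of an $A_r$-stable curve is smooth and contains points at which every $A_r$-singularity has been smoothed (each $A_r$ is a smoothable hypersurface singularity, and ampleness of $\omega(\sum p_i)$ is open), so $\cM_{g,n}$ is dense in $\Mtilde_{g,n}^r$; being irreducible and dense inside the smooth --- hence disjoint-union-of-irreducible --- stack $\Mtilde_{g,n}^r$, it forces $\Mtilde_{g,n}^r$ to be irreducible, in particular connected.

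I expect the two genuinely non-formal steps to be the uniform boundedness statement --- this is the place where the bound $r$ on the singularities is actually used, via the conductor/normalization analysis --- and the openness of the condition ``at worst $A_r$-singularities'' in a flat family; once these are in hand, the Hilbert-scheme bookkeeping, the unobstructedness computation, and the passage from $[H/\PGL_{N+1}]$ to $[X/\GL_M]$ are routine.
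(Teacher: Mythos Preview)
The paper does not give its own proof of this theorem: it simply cites Theorem~1.2.7 of \cite{PerTesi}. So there is nothing in the present paper to compare your argument against line by line.

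That said, your proposal is the standard Gieseker--Deligne--Mumford construction adapted to $A_r$-singularities, and it is correct as an outline; this is almost certainly what the cited reference does as well. A couple of minor remarks. First, rather than passing from $[H/\PGL_{N+1}]$ to a $\GL_M$-quotient via the adjoint embedding $\PGL_{N+1}\hookrightarrow\GL_{(N+1)^2}$, it is slightly cleaner (and more commonly done in this context) to take the frame bundle of the rank-$(N+1)$ vector bundle $\pi_*\bigl(\omega_{\cC/\Mtilde}(\sum p_i)^{\otimes m}\bigr)$ on $\Mtilde_{g,n}^r$: this is directly a $\GL_{N+1}$-torsor, and identifying its total space with a $\gm$-torsor over $H$ shows it is a quasi-projective scheme. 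Second, the openness of ``at worst $A_r$'' is exactly where the hypothesis on $\kappa$ (primes $\leq r+1$ invertible) enters, since one needs the miniversal deformation of each $A_h$-point to be the standard $y^2=x^{h+1}+a_{h-1}x^{h-1}+\dots+a_0$; you flag this correctly. Your unobstructedness and density-of-$\cM_{g,n}$ arguments are fine.
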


\begin{remark}\label{rem: max-sing}
Recall that we have an open embedding $\Mtilde_{g,n}^r  \subset \Mtilde_{g,n}^s$ for every $r\leq s$. Notice that $\Mtilde_{g,n}^r=\Mtilde_{g,n}^{2g+1}$ for every $r\geq 2g+1$. 
\end{remark}

The usual definition of the Hodge bundle extends to our setting. See Proposition 1.2.9 of \cite{PerTesi} for the proof.  As a consequence we obtain a locally free sheaf $\HH_{g}$ of rank~$g$ on $\mt_{g, n}^r$, which is called \emph{Hodge bundle}.

Now, we introduce the main actor of this paper: the moduli stack of hyperelliptic $A_r$-stable curves. 

\begin{definition}
	Let $C$ be an $A_r$-stable curve of genus $g$ over an algebraically closed field. We say that $C$ is hyperelliptic if there exists an involution $\sigma$ of $C$ such that the fixed locus of $\sigma$ is finite and the geometric categorical quotient, which is denoted by $Z$, is a reduced connected nodal curve of genus $0$. We  call the pair $(C,\sigma)$ a \emph{hyperelliptic $A_r$-stable curve} and such $\sigma$  is called a \emph{hyperelliptic involution}.
\end{definition}

Finally we can define $\Htilde_g^r$ as the following fibered category: its objects are the data of a pair $(C/S,\sigma)$ where $C/S$ is an $A_r$-stable curve over $S$ and $\sigma$ is an involution of $C$ over $S$ such that $(C_s,\sigma_s)$ is a $A_r$-stable hyperelliptic curve of genus $g$ for every geometric point $s \in S$. These are called \emph{hyperelliptic $A_r$-stable curves over $S$}. A morphism is a morphism of $A_r$-stable curves that commutes with the involutions. We clearly have a morphism of fibered categories 
$$\eta:\Htilde_g^r  \larr \Mtilde_g^r$$
 over $\kappa$ defined by forgetting the involution. This morphism is known to be a closed embedding for $r\leq 1$, where both source and target are smooth algebraic stacks of finite type over $\kappa$. We always assume that $2$ is invertible in $\kappa$.
 \begin{remark}
 	 We have a morphism $i:\Htilde_g^r \arr \mathit{I}_{\Mtilde_g^r}$ over $\Mtilde_g^r$ induced by $\eta$ where $\mathit{I}_{\Mtilde_g^r}$ is the inertia stack of $\Mtilde_g^r$. This factors through $\mathit{I}_{\Mtilde_g^r}[2]$, the two torsion elements of the inertia, which is closed inside the inertia stack. It is fully faithful by definition. This implies $\Htilde_g^r$ is a prestack in groupoid. It is easy to see that it is in fact a stack in groupoid.
 \end{remark}

We want to describe $\Htilde_g^r$ as a connected component of the $2$-torsion of the inertia of $\Mtilde_g^r$. To do so, we need to following two lemmas.  

\begin{lemma}\label{lem:quotient}
	Let $n$ be a positive integer coprime with the characteristic of $\kappa$.
	Let $X\arr S$ be a finitely presented, proper, flat morphism of schemes over $\kappa$ and let $\mmu_{n,S}\arr S$ the group of $n$-th roots acting on $X$ over $S$. Then there exists a geometric categorical quotient $X/\mmu_n\arr S$ which it is still  flat, proper and finitely presented.  
\end{lemma}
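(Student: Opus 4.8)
The strategy is to reduce the existence of the geometric categorical quotient to an affine/local computation and then check that flatness, properness, and finite presentation descend from $X$ to $X/\mmu_n$.

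First I would address existence. Since $\mmu_{n,S}$ is a finite flat (even finite locally free) group scheme over $S$ of order $n$ invertible in $\kappa$, and $X\to S$ is proper, the orbit of any point of a fiber lands in an affine open (by the standard fact that finitely many points of a scheme that is quasi-projective over an affine — or, more generally, using that $X\to S$ proper plus the action of a finite group scheme — admit a common affine neighborhood; here one uses that $X_s$ has a $\mmu_n$-invariant affine cover because $\mmu_n$ is linearly reductive in the sense that $\mmu_n$-invariant affine opens suffice). Concretely: cover $X$ by $\mmu_n$-invariant affine opens $U=\spec B$, with $B$ a $\mathcal{O}_S$-algebra carrying a coaction $B\to B\otimes_{\mathcal{O}_S}\mathcal{O}_S[\mmu_n]$, and set $(U/\mmu_n):=\spec B^{\mmu_n}$ where $B^{\mmu_n}$ is the subalgebra of invariants. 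These glue to give $X/\mmu_n\to S$, and one checks it is a geometric categorical quotient: this is a formal property of taking invariants, using that formation of invariants of a $\mmu_n$-action (equivalently, the degree-$0$ part of a $\ZZ/n$-grading) commutes with arbitrary base change on $S$ because $n$ is invertible — the projector $\tfrac1n\sum_{\zeta^n=1}\zeta^{-i}(\text{action})$ onto the $i$-th graded piece is defined integrally over $S$ after the finite flat base change $S[\mmu_n]\to S$, and invariants can be computed after that faithfully flat base change. This base-change compatibility is precisely what upgrades "categorical quotient" to "geometric categorical quotient" and also makes it commute with passing to geometric fibers.

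Next, flatness and finite presentation. Finite presentation of $B^{\mmu_n}$ over $\mathcal{O}_S$: since $B$ is finitely presented over $\mathcal{O}_S$ and, étale-locally on $S$ (or after the finite flat cover trivializing $\mmu_n$), $B^{\mmu_n}$ is a direct summand of $B$ as a $B^{\mmu_n}$-module with $B$ module-finite... — more carefully, the cleanest route is: the quotient map $X\to X/\mmu_n$ is finite (affine with $B$ integral, in fact finite, over $B^{\mmu_n}$, because $B$ is generated by elements each satisfying its "norm" polynomial $\prod_{\zeta}(T-\zeta\cdot b)$ with coefficients in $B^{\mmu_n}$), $X\to X/\mmu_n\to S$ with $X\to S$ of finite presentation and $X\to X/\mmu_n$ finite and surjective, so finite presentation of $X/\mmu_n\to S$ follows by standard descent of finite presentation along the finite faithfully flat (after base change) surjection, combined with the Noetherian approximation / limit arguments of EGA IV. For flatness I would use the local criterion fiberwise: $X/\mmu_n\to S$ is of finite presentation, and its geometric fibers are $(X/\mmu_n)_{\bar s}=X_{\bar s}/\mmu_n$ (by the base-change compatibility above), which are one-dimensional reduced (quotients of reduced schemes by a finite group in the tame case are reduced) — but dimension alone is not flatness. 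The honest argument: $B$ is $\mathcal{O}_S$-flat and is a $B^{\mmu_n}\otimes_{\mathcal{O}_S}$-module; after the faithfully flat base change to $S[\mmu_n]$ the action is split and $B\cong \bigoplus_{i=0}^{n-1}B_i$ as $B_0=B^{\mmu_n}$-modules, with $B_0$ a direct summand, hence $B_0$ is $\mathcal{O}_S$-flat because $B$ is and a direct summand of a flat module over a flat... — wait, one needs $B_0$ flat over $\mathcal{O}_S$, which follows since $B=\bigoplus B_i$ is $\mathcal{O}_S$-flat and each $B_i$ is an $\mathcal{O}_S$-module direct summand of $B$, so each $B_i$, in particular $B_0$, is $\mathcal{O}_S$-flat; then descend flatness of $B^{\mmu_n}$ down along $S[\mmu_n]\to S$ which is faithfully flat. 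Properness of $X/\mmu_n\to S$: $X\to S$ is proper, $X\to X/\mmu_n$ is finite hence surjective, so $X/\mmu_n\to S$ is separated (invariants separate the diagonal) and universally closed (image of a universally closed under a surjection, via $X\to X/\mmu_n$ being a surjective proper — indeed finite — morphism), and of finite type, hence proper.

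**Main obstacle.** The delicate point is not any single verification but the interplay: one must know that formation of the quotient $X/\mmu_n$ commutes with arbitrary base change $S'\to S$ (so that fibers behave, and so that "categorical" becomes "geometric categorical"), and this is exactly where the hypothesis $\gcd(n,\operatorname{char})=1$ enters — it guarantees the Reynolds-type projectors exist and that taking $\mmu_n$-invariants is exact, hence commutes with tensoring. I would isolate this as the core claim: \emph{for a finite locally free $\mmu_n$-action on an affine $S$-scheme with $n$ invertible, formation of invariants commutes with base change and the quotient map is finite faithfully flat onto a flat $S$-scheme}; granting this (which one can cite from the theory of tame/linearly reductive finite group schemes, e.g. the Abramovich–Olsson–Vistoli framework, or prove directly by the splitting argument above), everything else is bookkeeping with properness and finite presentation. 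I would also remark that one may alternatively invoke the general existence theorem for good moduli spaces / quotients by linearly reductive group schemes, but the hands-on $\mmu_n$ argument is shorter and self-contained here.
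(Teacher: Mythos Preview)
Your approach is more hands-on than the paper's, which outsources most of the work: the paper cites Rydh for existence and separatedness of the quotient (since $\mmu_{n,S}$ is finite locally free), cites Rydh again for properness and local finite presentation in the locally noetherian case, invokes diagonalizability of $\mmu_n$ to get base-change compatibility and preservation of flatness, and then uses a standard noetherian approximation argument (spread out $X\to S$ and the action to a finite-type $\kappa$-subalgebra $R_0\subset R$) to pass from the noetherian case to the general one. Your direct argument via the $\ZZ/n$-grading is in the same spirit as the paper's appeal to diagonalizability, and your flatness argument---$B=\bigoplus_i B_i$ over $\cO_S$ with $B^{\mmu_n}=B_0$ a direct summand of a flat module---is exactly the content behind ``flatness is preserved,'' only spelled out.

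There are two genuine weak spots in your write-up. First, the existence step: you assert that $X$ admits a cover by $\mmu_n$-invariant affine opens, appealing to ``finitely many points of a quasi-projective scheme over an affine lie in a common affine.'' But the lemma only assumes $X\to S$ proper, not quasi-projective, and proper schemes need not have this property; the phrase ``or, more generally, using that $X\to S$ proper plus the action of a finite group scheme'' is not a theorem. This is precisely why the paper defers to Rydh. (In the paper's applications $X$ is a family of curves, hence projective over $S$, so your argument would suffice there; but as stated the lemma needs more.) Second, your finite-presentation argument is muddled: you invoke ``descent of finite presentation along the finite faithfully flat (after base change) surjection'' $X\to X/\mmu_n$, but that map is \emph{not} flat in general (think of $\mmu_2$ acting on $\AA^2$ by $\pm 1$). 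The honest route, which you gesture at, is noetherian approximation---and that is exactly what the paper does: reduce to $S$ noetherian and then cite the literature.
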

\begin{proof}
First of all, we know the existence and also the separatedness (see Corollary 5.4 of\cite{Rydh}) because $\mmu_{n,S}$ is a finite locally free group scheme over $S$. Since $\mmu_{n}$ is diagonalizable, we know that the formation of the quotient commutes with base change and that flatness is preserved. We also know that if $S$ is locally noetherian, we get that $X/\mmu_{n,S} \arr S$ is locally of finite presentation and proper (see Proposition 4.7 of \cite{Rydh}). Because proper and locally of finite presentation are two local conditions on the target, we can reduce to the case $S=\spec R$ is affine. Using now that the morphism $X\arr S$ is of finite presentation we get that there exists $R_0 \subset R$ subalgebra of $R$ which is of finite type over $\kappa$ (therefore noetherian) such that there exist a morphism $X_0\arr S_0:=\spec R_0$ proper and flat, and an action of $\mmu_{S_0,n}$ over $X_0/S_0$ such that the pullback to $S$ is our initial data. Because formation of the quotient commutes with base change, we get that we can assume $S$ noetherian and we are done.
\end{proof}

\begin{lemma}\label{lem:conn-comp}
	Let $C\arr S$ be a $A_r$-stable curve over $S$, and $\sigma$ be an automorphism of the curve over $S$ such that $\sigma^2=\id$. Then there exists an open and closed subscheme $S'\subset S$ such that the following holds: a morphism $f:T\arr S$  factors through $S'$ if and only if $(C\times_S T, \sigma \times_S T)$ is a hyperelliptic $A_r$-stable curve over $T$.
\end{lemma}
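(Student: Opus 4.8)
The plan is as follows. Since $2$ is invertible in $\kappa$ and $\sigma^{2}=\id$, the $\ZZ/2$-action of $\sigma$ on $C$ over $S$ is a $\mmu_{2}$-action, so by \Cref{lem:quotient} the geometric categorical quotient $Z:=C/\mmu_{2}\arr S$ exists and is flat, proper and of finite presentation, and its formation commutes with arbitrary base change; in particular, for a geometric point $s$, the fibre $Z_{s}$ is the geometric categorical quotient $C_{s}/\sigma_{s}$ of the definition. I would also introduce the fixed scheme $C^{\sigma}:=C\times_{C\times_{S}C}C$, the equalizer of $\id$ and $\sigma$; it is a closed subscheme of $C$, hence proper and of finite presentation over $S$, and its formation commutes with base change, so $(C^{\sigma})_{s}=C_{s}^{\sigma_{s}}$. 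The goal is to prove that
$$ S':=\bigl\{\,s: C^{\sigma}_{s}\ \text{is finite}\,\bigr\}\cap\bigl\{\,s:\chi(\cO_{Z_{s}})=1\,\bigr\} $$
(intersected, if $C/S$ does not already have all fibres of genus $g$, with the open and closed locus where $C_{s}$ has arithmetic genus $g$) is an open and closed subscheme of $S$ with the required universal property. At the outset one reduces to $S$ locally noetherian by the standard limit argument, exactly as in the proof of \Cref{lem:quotient}.

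I would first deal with the genus condition. For every geometric point $s$ the curve $C_{s}$ is connected and reduced, hence so is $Z_{s}$: topologically $|Z_{s}|$ is the quotient of $|C_{s}|$ by the finite group $\langle\sigma_{s}\rangle$, and $\cO_{Z_{s}}$ is locally a ring of invariants of a reduced ring. Thus $h^{0}(\cO_{Z_{s}})=1$ and $p_{a}(Z_{s})=1-\chi(\cO_{Z_{s}})$; since $Z\arr S$ is flat and proper, $s\mapsto\chi(\cO_{Z_{s}})$ is locally constant, so $\{\chi(\cO_{Z_{s}})=1\}=\{p_{a}(Z_{s})=0\}$ is open and closed in $S$. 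I claim that on this locus $Z_{s}$ is automatically a reduced connected nodal curve of genus $0$. Indeed, every singularity of $Z_{s}$ has type $A_{j}$ for some $j$: if $\bar p\in Z_{s}$ and $\bar q\in C_{s}$ lies above it, then either $\sigma_{s}(\bar q)\neq\bar q$ and $\widehat{\cO}_{Z_{s},\bar p}\simeq\widehat{\cO}_{C_{s},\bar q}$, or $\bar q$ is fixed and $\widehat{\cO}_{Z_{s},\bar p}=\widehat{\cO}_{C_{s},\bar q}^{\sigma_{s}}$ is described by \Cref{prop:descr-inv} and its corollary. Then the normalization sequence $0\arr\cO_{Z_{s}}\arr\nu_{*}\cO_{\widetilde{Z_{s}}}\arr\bigoplus_{\bar p}\cF_{\bar p}\arr 0$, together with $h^{0}(\cO_{Z_{s}})=1$, $p_{a}(Z_{s})=0$ and the genus count in the spirit of \Cref{rem:genus-count}, forces every component of $\widetilde{Z_{s}}$ to be rational and every singular point $\bar p$ to satisfy $\delta_{\bar p}=(\text{number of branches at }\bar p)-1$; among $A_{j}$-singularities this holds only for the node, so $Z_{s}$ is a tree of $\PP^{1}$'s.

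The heart of the proof is showing that $\{s: C^{\sigma}_{s}\ \text{is finite}\}$ is open and closed. It is open, since $\{s:\dim C^{\sigma}_{s}\geq 1\}$ is closed by upper semicontinuity of fibre dimension for the proper morphism $C^{\sigma}\arr S$. For closedness, being constructible it suffices to check stability under specialization, which (over a locally noetherian base) may be tested along discrete valuation rings; this reduces to the claim: if $R$ is a discrete valuation ring with fraction field $K$ and residue field $k$, $C_{R}\arr\spec R$ an $A_{r}$-stable curve, and $\sigma_{R}$ an $R$-involution whose fibre over $K$ has finite fixed locus, then its fibre over $k$ does too. One argues by contradiction: if $\sigma_{0}$ fixed pointwise some irreducible component $D_{0}\subseteq C_{0}$, choose a closed point $p\in D_{0}$ that is a smooth point of $C_{0}$ (possible, since the singular locus of $C_{0}$ is finite). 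Then $C_{R}$ is regular at $p$, with a regular system of parameters $t,x$ where $t$ is a uniformizer of $R$ and $(t)$ cuts out $C_{0}$. As $\sigma_{R}$ is an $R$-involution, $\sigma_{R}(t)=t$; as $\sigma_{0}$ is the identity near $p$ on $C_{0}$, we get $\sigma_{R}(x)=x+tg$ for some $g\in\widehat{\cO}_{C_{R},p}$, and $\sigma_{R}^{2}=\id$ forces $\sigma_{R}(g)=-g$, so $x':=x+\frac{tg}{2}$ (here $2$ is inverted) is $\sigma_{R}$-invariant. Hence $\sigma_{R}$ is the identity on $\widehat{\cO}_{C_{R},p}$, so $C_{R}^{\sigma_{R}}$ contains the unique irreducible component of $C_{R}$ through $p$; that component dominates $\spec R$, so its fibre over $K$ is a positive-dimensional subscheme of $C_{K}$ fixed by $\sigma_{K}$ --- contradicting the hypothesis. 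This is the step I expect to be the main obstacle; it is a small instance of the degeneration analysis of \Cref{sec:3} and cannot be dispensed with, since the genus-$0$ condition on $Z_{s}$ alone does \emph{not} imply that $\sigma_{s}$ has finite fixed locus: a $\PP^{1}$ meeting three elliptic curves, each along a node, with $\sigma$ the identity on the $\PP^{1}$ and $-\id$ on each elliptic component, is $A_{1}$-stable of genus $3$ with connected reduced nodal genus-$0$ quotient but one-dimensional fixed locus (compare \Cref{rem:fix-locus}).

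Putting the pieces together, $S'$ is an open and closed subscheme of $S$, and for a geometric point $s$ one has $s\in S'$ if and only if $\sigma_{s}$ has finite fixed locus and $Z_{s}$ is a reduced connected nodal curve of genus $0$ (the latter being automatic once $\chi(\cO_{Z_{s}})=1$, by the second step), i.e. if and only if $(C_{s},\sigma_{s})$ is a hyperelliptic $A_{r}$-stable curve of genus $g$. Since the formations of $Z$, of $C^{\sigma}$ and of the Euler characteristic all commute with base change, a morphism $f:T\arr S$ has the property that $(C_{T},\sigma_{T})$ is a hyperelliptic $A_{r}$-stable curve over $T$ precisely when every geometric point of $T$ maps into $S'$, that is, precisely when $f$ factors through $S'$ (scheme-theoretically, as $S'$ is open and closed). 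This completes the plan.
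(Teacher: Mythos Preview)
Your proof is correct and follows the same overall architecture as the paper's: form the quotient $Z\to S$ via \Cref{lem:quotient}, observe that the genus of $Z_s$ is locally constant, and then show that finiteness of the fixed locus is both open (by upper semicontinuity of fibre dimension for the proper map $C^{\sigma}\to S$) and closed. You are also more explicit than the paper in verifying that $p_a(Z_s)=0$ already forces $Z_s$ to be nodal, using that all quotient singularities are of type $A_j$; the paper defers this to a later remark.

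The substantive difference lies in the closedness step. The paper first restricts to the genus-$0$ locus and then argues, via \Cref{prop:descr-inv}, that a positive-dimensional fixed locus in a fibre must be a $\PP^1$ meeting the rest of the curve only in separating nodes; it then invokes \Cref{lem:local-node-involution} at such a node, whose normal form $R[[x,y]]/(xy)$ with $\sigma(x)=x$, $\sigma(y)=-y$ exhibits an $R$-flat branch inside $C_R^{\sigma_R}$, yielding a positive-dimensional fixed locus on the generic fibre. Your argument bypasses this machinery: you work instead at a \emph{smooth} point $p$ of the fixed component, where $\widehat{\cO}_{C_R,p}$ is regular two-dimensional, and the averaging trick $x'=x+\tfrac{tg}{2}$ forces $\sigma_R=\id$ locally, hence on an entire irreducible component of $C_R$ dominating $\spec R$. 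This is shorter, does not require first passing to the genus-$0$ locus, and makes no use of the classification from \Cref{sec:1}; the paper's route, by contrast, reuses tools already developed and ties in with the later description of $\Htilde_g^r$ via cyclic covers.
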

  
\begin{proof}
Consider $$S':=\{s\in S| (C_s,\sigma_s) \in \Htilde_g^r(s)  \}\subset S$$
i.e. the subset where the geometric fibers over $S$ are hyperelliptic $A_r$-stable curves of genus $g$. If we prove $S'$ is open and closed, we are done.
Let $Z\rightarrow S$ be the geometric quotient of $C$ by the involution. Because of \Cref{lem:quotient}, $Z\rightarrow S$ is flat, proper and finitely presented and thus both the dimension and the genus of the geometric fibers over $S$ are locally costant function on $S$. It remain to prove that the finiteness of the fixed locus is an open and closed condition. Notice that in general the fixed locus is not flat over $S$, therefore this is not trivial.

The openess follows from the fact the fixed locus of the involution is proper over $S$, and therefore we can use the semicontinuity of the dimension of the fibers. 
 
Regarding the closedness, the fixed locus of the geometric fiber over a point $s\in S$ is positive dimensional only when we have a projective line on the fiber $C_s$ such that intersects the rest of the curve only in separating nodes (because of \Cref{prop:descr-inv}). Therefore the result is a direct conseguence of \Cref{lem:local-node-involution}.
\end{proof}
 	
\begin{proposition}\label{prop:open-closed-imm}
	The morphism $i:\Htilde_g^r \arr \mathit{I}_{\Mtilde_g^r}[2]$ is an open and closed immersion of algebraic stacks. In particular, $\Htilde_g^r$ is a closed substack of $\mathit{I}_{\Mtilde_g^r}$ and it is locally of finite type (over $\kappa$). 
\end{proposition}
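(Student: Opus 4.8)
The plan is to verify that $i$ is representable, a monomorphism, and both open and closed, treating these properties Zariski-locally on a smooth atlas of $\mathit{I}_{\Mtilde_g^r}[2]$. Recall from \Cref{theo:descr-quot} that $\Mtilde_g^r\simeq[X/\GL_N]$ with $X$ a smooth quasi-projective scheme, so the inertia stack is the quotient of $\{(x,g)\in X\times\GL_N : gx=x\}$ by the conjugation action, and $\mathit{I}_{\Mtilde_g^r}[2]$ is the closed substack cut out by $g^2=1$; in particular it is a finite-type algebraic stack over $\kappa$. The morphism $i$ factors the inclusion of groupoids described in the remark preceding \Cref{lem:quotient}, and since objects of $\Htilde_g^r$ are the \emph{same} data as objects of $\mathit{I}_{\Mtilde_g^r}[2]$ satisfying an extra fiberwise condition (the involution $\sigma$ with finite fixed locus and nodal genus-$0$ quotient), with morphisms likewise the same, the functor $i$ is fully faithful. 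Hence $i$ is a monomorphism of algebraic stacks, so in particular representable by monomorphisms of schemes.

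Next I would upgrade "monomorphism" to "open and closed immersion" using \Cref{lem:conn-comp}. Pick a smooth atlas $U\arr\mathit{I}_{\Mtilde_g^r}[2]$ by a scheme; pulling back the universal $A_r$-stable curve with its $2$-torsion automorphism gives a pair $(C\arr U,\sigma)$ with $\sigma^2=\id$. By \Cref{lem:conn-comp} there is an open and closed subscheme $U'\subset U$ through which a test morphism $T\arr U$ factors precisely when $(C_T,\sigma_T)$ is a hyperelliptic $A_r$-stable curve over $T$ — that is, precisely when $T\arr U$ lifts to $\Htilde_g^r$. This exhibits $\Htilde_g^r\times_{\mathit{I}_{\Mtilde_g^r}[2]}U\simeq U'$, so after base change $i$ becomes the inclusion of an open-and-closed subscheme; since "open immersion" and "closed immersion" are fppf-local on the target, $i$ itself is an open and closed immersion of algebraic stacks.

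Finally, the "in particular" follows formally: $\mathit{I}_{\Mtilde_g^r}[2]$ is closed in $\mathit{I}_{\Mtilde_g^r}$ because it is defined by the equation $g^2=\id$ on the inertia (a closed condition, as $\mathit{I}_{\Mtilde_g^r}\arr\Mtilde_g^r$ is separated of finite type and the locus where a section has order dividing $2$ is closed), and the composition of the closed immersion $\Htilde_g^r\into\mathit{I}_{\Mtilde_g^r}[2]$ with this closed immersion is a closed immersion; finite-typeness over $\kappa$ is inherited from $\mathit{I}_{\Mtilde_g^r}$, which is of finite type over $\Mtilde_g^r$ and hence over $\kappa$ by \Cref{theo:descr-quot}.

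I expect the only genuine content to be the reduction to \Cref{lem:conn-comp}: once one has carefully set up that the fibered category $\Htilde_g^r$ is literally the subcategory of $\mathit{I}_{\Mtilde_g^r}[2]$ carved out by an open-and-closed condition on the base (which is exactly what \Cref{lem:conn-comp} provides, with the subtlety that the fixed locus need not be flat handled there via \Cref{lem:local-node-involution}), everything else is formal descent of immersion properties. The main obstacle, already dispatched in \Cref{lem:conn-comp}, is the closedness of the finiteness-of-fixed-locus condition; here I would simply cite that lemma.
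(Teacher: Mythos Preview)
Your proposal is correct and follows essentially the same approach as the paper: both arguments use full faithfulness of $i$ to obtain representability, then identify the base change of $i$ along a map from a scheme with the open-and-closed subscheme produced by \Cref{lem:conn-comp}. The only cosmetic difference is that the paper checks this against an arbitrary test scheme $S\to\mathit{I}_{\Mtilde_g^r}$ (and separately verifies representability of the diagonal via the cartesian square for a fully faithful functor), whereas you pull back to a smooth atlas and invoke fppf descent of immersions; the content is the same.
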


\begin{proof}
We first prove that $\Htilde_g^r$ is an algebraic stack, and an open and closed substack of $\mathit{I}_{\Mtilde_g^r}[2]$.
First of all, we need to prove that the diagonal of $\Htilde_g^r$ is representable by algebraic spaces. It follows from the following fact: given a morphism of fibered categories $X\arr Y$, we can consider the $2$-commutative diagram
$$
\begin{tikzcd}
X \arrow[d, "\Delta_X"] \arrow[r, "f"] & Y \arrow[d, "\Delta_Y"] \\
X\times X \arrow[r, "{(f,f)}"]       & Y\times Y;            
\end{tikzcd}
$$
 if $f$ is fully faithful, then the diagram is also cartesian.
Secondly, we need to prove that the morphism $i$ is representable by algebraic spaces and it is an open and closed immersion. Suppose we have a morphism $ V\arr \mathit{I}_{\Mtilde_g^r}$ from a $\kappa$-scheme $S$. Thus we have a cartesian diagram:
 $$
 \begin{tikzcd}
 F \arrow[d] \arrow[r, "i_S"] & S \arrow[d]              \\
 \Htilde_g^r \arrow[r, "i"]   & \mathit{I}_{\Mtilde_g^r}
 \end{tikzcd}
 $$
 where $F$ is equivalent to a category fibered in sets as $i$ is fully faithful. We can describe $F$ in the following way: for every $T$ a $\kappa$-scheme, we have $$F(T)=\{ f:T\arr S |(C_S\times_S T,\sigma_S \times_S T) \in \Htilde_g^r(T) \}$$
 where $(C_S,\sigma_S) \in \mathit{I}_{\Mtilde_g^r}(S)$ is the object associated to the morphism $S\arr   \mathit{I}_{\Mtilde_g^r}$. By \Cref{lem:conn-comp}, we deduce $F=S'$ and the morphism $i_S$ is an open and closed immersion.
\end{proof}

In the last part of this section we introduce another description of $\Htilde_g^r$, useful for understanding the link with the smooth case. We refer to \cite{AbOlVis} for the theory of twisted nodal curves, although we consider only twisted curves with $\mu_2$ as stabilizers and with no markings. 

The first description is a way of getting cyclic covers from $A_r$-stable hyperelliptic curves. 

\begin{definition}
	Let $\cC(2,g,r)'$ be the category fibered in groupois whose object are morphisms $f:C\rightarrow \cZ$ over some base scheme $S$ such that $C\rightarrow S$ is a family of $A_r$-stable genus $g$ curve, $\cZ\rightarrow S$ is a family of twisted curves of genus $0$ and $f$ is a finite flat morphism of degree $2$ which is generically \'etale. Morphisms are commutative diagrams of the form
	$$
	\begin{tikzcd}
	C \arrow[d, "\phi_C"] \arrow[r, "f"] & \cZ \arrow[d, "\phi_Z"] \arrow[r] & S \arrow[d] \\
	C' \arrow[r, "f'"]                   & \cZ' \arrow[r]                    & S'.         
	\end{tikzcd}
	$$
\end{definition}

\begin{remark}
	The definition implies that the morphism $f$ is \'etale over the stacky locus of $\cZ$ . First of all, we can reduce to the case of $S$ being a spectrum of an algebraically closed field. Let $\xi: B\mu_2\hookrightarrow \cZ$ be a stacky node of $\cZ$, thus $f^{-1}(\xi)\rightarrow B\mu_2$ is finite flat of degree two. It is clear that  $f^{-1}(\xi)\subset C$ implies $f^{-1}(\xi)\rightarrow B\mu_2$ \'etale. It is easy to prove that over a stacky node of $\cZ$ there can only be a separating node of $C$. 
\end{remark}

The theory of cyclic covers (see for instance \cite{ArVis}) guarantees the existence of the functor of fibered categories 
$$\gamma:\cC(2,g,r)' \longrightarrow \Htilde_g^r$$
defined in the following way on objects: if $f:C\rightarrow Z$ is finite flat of degree $2$, we can give a $\ZZ/(2)$-grading on $f_*\cO_C$, because it splits as the sum of $\cO_{\cZ}$ and some line bundle $\cL$ on $\cZ$ with a section $\cL^{\otimes 2}\hookrightarrow \cO_{\cZ}$. This grading defines an action of $\mu_2$ over $C$. Everything is relative to a base scheme $S$. The geometric quotient by this action is the coarse moduli space of $\cZ$, which is a genus $0$ curve. The fact that $f$ is generically \'etale, implies that the fixed locus is finite. 

We prove a general lemma which gives us the uniqueness of an involution once we fix the geometric quotient. 

\begin{lemma}\label{lem:unique-inv-quotient}
	Suppose $S$ is a $\kappa$-scheme.  Let $X/S$ be a finitely presented, proper, flat $S$-scheme and $\sigma_1,\sigma_2$ be two involutions of $X/S$. Consider a geometric quotient $\pi_i:X\rightarrow Y_i$ of the involution $\sigma_i$ for $i=1,2$. If there exists an isomorphism  $\psi:Y_1 \rightarrow Y_2$ of $S$-schemes which commutes with the quotient maps, then $\sigma_1=\sigma_2$. 
\end{lemma}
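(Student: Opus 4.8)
The plan is to recover each involution from its quotient map and to exploit that $\psi$ intertwines $\pi_1$ and $\pi_2$. First I would record the elementary consequence of the commutativity $\psi\circ\pi_1=\pi_2$: since $\psi$ is an isomorphism, $\pi_1$ and $\pi_2$ have the same (scheme-theoretic, hence set-theoretic) fibers in $X$, namely $\pi_2^{-1}(\psi(y))=\pi_1^{-1}(y)$ for every $y\in Y_1$. As each $\pi_i$ is a \emph{geometric} quotient, its geometric fibers are exactly the $\sigma_i$-orbits; therefore $\sigma_1$ and $\sigma_2$ have the same orbits on the underlying space of $X$. In particular they agree topologically on $X_{\mathrm{red}}$, they share the same fixed-point locus $F\subseteq X$, and, reading $\pi_2\circ\sigma_2=\pi_2$ through the isomorphism $\psi$, we obtain $\pi_1\circ\sigma_2=\pi_1$ as well; thus $\sigma_2$, like $\sigma_1$, is a morphism over $Y_1$, and both $(\id,\sigma_1)$ and $(\id,\sigma_2)$ are sections of the projection $\pr_1\colon X\times_{Y_1}X\to X$.

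Next I would analyze the open locus $U:=X\setminus F$. On $U$ both involutions act freely, so $\pi_1|_U\colon U\to\pi_1(U)=:V_1$ is a $\mmu_2$-torsor; since $U=\pi_1^{-1}(V_1)$ is $\pi_1$-saturated, $\pi_1|_U$ is finite \'etale, and therefore $\pr_1$ is \'etale over $U$. The restrictions $\sigma_1|_U$ and $\sigma_2|_U$ are then two sections of this \'etale morphism that agree on every geometric point of $U$ (by the first step the two-element fibers of $\pi_1|_U$ and $\pi_2|_U$ coincide, hence so do their non-identity members); two sections of an \'etale map agreeing on all points are equal, so $\sigma_1|_U=\sigma_2|_U$ as morphisms.

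Finally I would propagate this equality onto $F$. The equalizer $E$ of $\sigma_1$ and $\sigma_2$ is a closed subscheme of $X$ (because $X\to S$ is separated) and it contains $U$. For a hyperelliptic involution $F$ meets every geometric fiber of $X\to S$ in a finite set, and since $A_r$-curves are l.c.i.\ --- hence Cohen--Macaulay, so that $X\to S$ has no embedded points --- the open subscheme $U$ is schematically dense in $X$; therefore $E=X$, i.e.\ $\sigma_1=\sigma_2$. (To obtain the statement for an arbitrary proper flat $X/S$ one first reduces to the case where $S$ is a geometric point, and then treats separately any fibral irreducible component on which $\sigma_1$ acts trivially: on the generic open part of such a component $\pi_1$, and hence also $\pi_2=\psi\circ\pi_1$, is an immersion, which forces $\sigma_2$ to act trivially there too, after which the argument for reduced $X$ applies.)

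The step I expect to be the main obstacle is this last one: over the fixed (ramification) locus $\pi_1$ fails to be \'etale, so the ``two sections of an \'etale map'' argument is not available there, and one must pass from $U$ to its closure by separatedness together with the absence of embedded points --- which is exactly where the finiteness of the fixed locus of a hyperelliptic involution, and the l.c.i.\ nature of $A_r$-curves, come in.
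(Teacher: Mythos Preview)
Your approach is quite different from the paper's. The paper argues directly via the functor of points: for an arbitrary $T$-valued point $t\colon T\to X$, it asserts that the fiber $X_{\pi_i(t)}\to T$ admits at most two sections, namely $t$ and $\sigma_i(t)$; since $\psi$ identifies the two fibered products, one reads off $\sigma_1(t)=\sigma_2(t)$ immediately, with no separate treatment of the ramification locus.

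You are right to flag the passage from the free locus to all of $X$ as the obstacle, but the obstacle is worse than you suspect: the lemma is false in the stated generality. Take $S=\spec k$, $X=\spec k[x]/(x^{3})$, and the involutions $\sigma_0(x)=-x$ and $\sigma_1(x)=-x+x^{2}$. Both have invariant subring $k+kx^{2}$, so the two quotient maps literally coincide and $\psi=\id$ applies, yet $\sigma_0\neq\sigma_1$. The paper's ``at most two sections'' step fails here because $\pi$ is not flat: the $(k+kx^{2})$-algebra endomorphisms of $k[x]/(x^{3})$ are exactly $x\mapsto \pm x+cx^{2}$ for arbitrary $c\in k$, so $X\times_{Y}X\to X$ has infinitely many sections. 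Your parenthetical cannot rescue the general statement either, since this counterexample already lives over a single geometric point.

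Your density argument, on the other hand, is a correct proof whenever the free locus $U$ is schematically dense in $X$ --- in particular for families of $A_r$-stable curves and for their trivial square-zero thickenings $C[\epsilon]$ (both Cohen--Macaulay with nowhere-dense fixed locus), which are exactly the two places the paper invokes the lemma. So your route in fact establishes what the paper needs; the honest repair is to add a hypothesis (schematic density of $U$, or flatness of $\pi_i$) rather than to pursue the general case.
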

\begin{proof}
	Let $T$ be a $S$-scheme and $t:T\rightarrow X$ be a $T$-point of $X$. We want to prove that $\sigma_1(t)=\sigma_2(t)$.
	Fix $i\in \{1,2\}$ and consider the cartesian diagram
	$$
	\begin{tikzcd}
	X_{\pi_i(t)} \arrow[d] \arrow[r] & X \arrow[d, "\pi_i"] \\
	T \arrow[r, "\pi_i(t)"]          & Y_i,                
	\end{tikzcd}
	$$
	thus we have that there are at most two sections of the morphism $X_{\pi_i(t)}\rightarrow T$, namely $t$ and $\sigma_i(t)$. Using the fact that $\pi_2=\psi\circ \pi_1$, it follows easily that $\sigma_1(t)=\sigma_2(t)$.
\end{proof}	

\begin{proposition}\label{prop:cyclic-covers}
	The functor $\gamma:\cC(2,g,r)' \rightarrow \Htilde_g^r$ is an equivalence of fibered categories.
\end{proposition}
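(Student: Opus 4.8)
The plan is to prove that $\gamma$ is fully faithful and essentially surjective, which amounts to reconstructing the twisted target $\cZ$ and the double cover $f$ canonically from the pair $(C,\sigma)$, i.e.\ to producing a quasi-inverse. The guiding principle is that the geometric categorical quotient $Z$ of $C$ by $\sigma$ is intrinsic to $(C,\sigma)$, and that $\cZ$ differs from $Z$ only by reinstating a twisted-node structure at a finite set of nodes singled out by the local classification of \Cref{prop:descr-inv}.

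\emph{Essential surjectivity.} Given $(C,\sigma)\in\Htilde_g^r(S)$, I would view $\sigma$ as a $\mmu_2$-action (legitimate since $2$ is invertible in $\kappa$). By \Cref{lem:quotient} the geometric categorical quotient $\pi\colon C\to Z:=C/\mmu_2$ is flat, proper and finitely presented over $S$, and by the definition of $\Htilde_g^r$ each geometric fibre of $Z\to S$ is a connected reduced nodal curve of genus $0$. Working étale-locally on $C$ and using \Cref{prop:descr-inv} together with the ensuing description of the invariant subalgebras, one sees that $\pi$ is already finite flat of degree $2$, and generically étale, away from the images of the separating nodes of the fibres of $C$ at which $\sigma$ preserves each branch and acts by $-1$ on both branch parameters; at such a point the local quotient is a node and $\pi$ is not flat, whereas in every other local model (non-fixed point; fixed smooth point for arbitrary $r$; node with the two branches exchanged; etc.) $\pi$ is finite flat of degree $2$ onto a point of a nodal curve, and the non-nodal quotients such as the $A_k$ with $k\geq 2$ of case $(b_3)$ cannot occur precisely because $Z$ is required nodal. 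I would then define $\cZ$ by replacing $Z$, at each of those finitely many distinguished nodes, with the standard twisted node $[\{xy=0\}/\mmu_2]$ ($\mmu_2$ acting by $(x,y)\mapsto(-x,-y)$), glued to $Z$ along the complement. Then $\cZ\to S$ is a family of twisted curves of genus $0$ — its coarse space is $Z$, and passing to $\cZ$ does not change the cohomology of the structure sheaf — and the induced $f\colon C\to\cZ$ is finite flat of degree $2$ (it is $\pi$ over the complement of the twisted nodes and a $\mmu_2$-torsor near each of them) and generically étale, the fixed locus of $\sigma$ being finite. Finally $\cO_\cZ$ is the subsheaf of $\sigma$-invariants of $f_*\cO_C$, so the $\ZZ/2$-grading of $f_*\cO_C$ that $\gamma$ uses has associated involution exactly $\sigma$; equivalently, $\sigma$ and the involution $\sigma_f$ output by $\gamma$ share the quotient $\pi$, hence coincide by \Cref{lem:unique-inv-quotient}. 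Thus $\gamma(C\to\cZ)\simeq(C,\sigma)$.

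\emph{Full faithfulness.} Since $f$ is finite, faithfully flat and surjective, hence an fppf epimorphism, $\phi_\cZ$ is uniquely determined by $\phi_C$ (and the base morphism) via $\phi_\cZ\circ f=f'\circ\phi_C$. For the existence of $\phi_\cZ$, given $\phi_C$ compatible with $\sigma$ and $\sigma'$: since both fibered categories are fibered in groupoids, $\phi_C$ is an isomorphism onto the relevant pullback of $C'$; it carries the geometric categorical quotient of $(C,\sigma)$ isomorphically to that of $(C',\sigma')$, carries separating nodes to separating nodes, and intertwines the branch-signs of the two involutions, hence carries the distinguished nodes of $\cZ$ to those of $\cZ'$ and descends to the required isomorphism $\phi_\cZ$ of families of twisted curves. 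In particular $(\cZ,f)$ is determined by $(C,\sigma)$ up to unique isomorphism, so the recipe above is a genuine quasi-inverse to $\gamma$.

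The step I expect to be the main obstacle is globalizing the construction of $\cZ$: turning the fibrewise recipe into a single \emph{family} over $S$. One must check that the locus of distinguished nodes is open and closed inside the relative singular locus $\mathrm{Sing}(Z/S)$ (which is finite and unramified over $S$), so that twisting $Z$ along it, in the sense of \cite{AbOlVis}, produces a flat, proper, finitely presented family $\cZ\to S$ with twisted-curve fibres. This rests on \Cref{lem:local-node-involution}: over a complete local base it puts $\sigma$ near a separating node into a normal form $\sigma(x)=\xi_2x$, $\sigma(y)=\xi_1y$ on $R[[x,y]]/(xy-t)$ with signs $\xi_i$ independent of the fibre, so that being a distinguished node is a locally constant condition, even though such a node may be present only over a closed subscheme of $S$ and may smooth out in a deformation. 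The fibrewise description of $f$ and $\cZ$ then follows once more from the local analysis of \Cref{sec:1}, and the cyclic-cover formalism of \cite{ArVis} used to set up $\gamma$ handles the remaining compatibilities.
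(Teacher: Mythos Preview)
Your argument is correct and follows the same overall strategy as the paper: build a quasi-inverse to $\gamma$ by reconstructing $\cZ$ from $(C,\sigma)$ via the local classification of \Cref{prop:descr-inv}, and then invoke \Cref{lem:unique-inv-quotient} to check that the two functors compose to the identity. The difference is in how $\cZ$ is produced. You start from the coarse geometric quotient $Z=C/\sigma$ and \emph{add} $\mmu_2$-structure at the finitely many distinguished nodes, relying on a twisting construction for families of nodal curves; the paper instead starts from the stack quotient $[C/\sigma]$ and \emph{removes} the superfluous $\mmu_2$-inertia by rigidifying along $F_\sigma\setminus N_\sigma$, where $N_\sigma\subset F_\sigma$ is the non-flat locus of the fixed scheme, cut out intrinsically by the first Fitting ideal of $F_\sigma$. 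Both routes land on the same $\cZ$ and both lean on \Cref{lem:local-node-involution} to make the construction behave in families. The paper's approach has the advantage that the Fitting-ideal description of $N_\sigma$ and the rigidification procedure are manifestly functorial and base-change compatible over an arbitrary $S$, so the globalization issue you single out as the main obstacle is absorbed into standard machinery; your approach keeps the coarse space $Z$ in the foreground and makes the location of the stacky nodes more explicit, at the cost of having to argue directly that the distinguished-node locus is open and closed in $\mathrm{Sing}(Z/S)$.
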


\begin{proof}
	We explicitly construct an inverse. Let $(C,\sigma)$ be a hyperelliptic $A_r$-stable curve of genus $g$ over $S$. Consider $F_{\sigma}\subset C$ the fixed locus of $\sigma$, which is finite over $S$. \Cref{prop:descr-inv} implies that the defining ideal of $F_{\sigma}$ in $C$ is locally generated by at most $2$ elements and the quotient morphism is not flat exactly in the locus where it is generated by $2$ elements. By the theory of Fitting ideals, we can describe the non-flat locus as the vanishing locus of the $1$-st Fitting ideal of $F_{\sigma}$. Let $N_{\sigma}$ be such locus in $C$. \Cref{lem:local-node-involution} implies that $N_{\sigma}$ is also open inside $F_{\sigma}$, therefore $F_{\sigma}\setminus N_{\sigma}$ is closed inside $C$. If we look at the stacky structure on the image of $F_{\sigma}\setminus N_{\sigma}$ through the stacky quotient morphism $C\rightarrow [C/\sigma]$, we know that the stabilizers are isomorphic to $\mu_2$ as it is contained in the fixed locus. We denote by $[C/\sigma] \rightarrow \cZ$ the rigidification along $F_{\sigma}\setminus N_{\sigma}$ of $[C/\sigma]$. Because we are dealing with linearly reductive stabilizers, we know that the rigidification is functorial and $\cZ\rightarrow S$ is still flat (proper and finitely presented).
	
	We claim that the composition $$C\longrightarrow [C/\sigma] \longrightarrow \cZ$$ is an object of $\cC(2,g,r)$. As the quotient morphism is \'etale, the only points where we need to prove flatness are the ones in $F_{\sigma}\setminus N_{\sigma}$, which are fixed points. Because the morphism $C\rightarrow \cZ$ locally at a point $p\in F_{\sigma}\setminus N_{\sigma}$ is the same as the geometric quotient, it follows from \Cref{prop:descr-inv} that it is flat. Thus $C\rightarrow \cZ$ is a finite flat morphism of degree $2$, generically \'etale as $F_{\sigma}$ is finite. 
	
	Hence, this construction defines a functor 
	$$\tau: \Htilde_g^r \longrightarrow \cC(2,g,r)'$$
	where the association on morphisms is defined in the natural way. A direct inspection using \Cref{lem:unique-inv-quotient} shows that $\tau$ and $\gamma$ are one the quasi-inverse of the other. 
	
\end{proof}

Using the theory developed in \cite{ArVis}, we know that $\cC(2,g,r)'$ is isomorphic to a stack of cyclic covers, namely the datum $C\rightarrow \cZ$ is equivalent to the triplet $(\cZ,\cL,i)$ where $\cZ$ is a twisted nodal curve, $\cL$ is a line bundle over $\cZ$ and $i:\cL^{\otimes 2}\rightarrow \cO_{\cZ}$ is a morphism of $\cO_{\cZ}$-modules. The vanishing locus of $i^{\vee}$ determines a subscheme of $\cZ$ which consists of the branching points of the cyclic cover. 

To recover $C$, we consider the sheaf of $\cO_{\cZ}$-algebras $\cA:=\cO_{\cZ}\oplus \cL$ where the algebra structure is defined by the section $i:\cL^{\otimes 2}\hookrightarrow \cO_{\cZ}$. Thus we define $C:=\spec_{\cZ}(\cA)$. Clearly not every triplet $(\cZ,\cL,i)$ gives us a $A_r$-stable genus $g$ curve $C$. We need to understand what are the conditions for $\cL$ and $i$ such that $C$ is a $A_r$-stable curve of genus $g$. Because $C\rightarrow S$ is flat, proper and of finite presentation, it is a family of $A_r$-stable curve of genus $g$ if and only if the geometric fiber $C_{s}$ is a $A_r$-stable curve of genus $g$ for every point $s \in S$. Therefore we can reduce to understand the conditions for $\cL$ and $i$ over an algebraically closed field $k$. 

\begin{lemma}
	In the situation above, for every triplet $(\cZ,\cL,i)$ we have that $C$ is a proper one-dimensional Deligne-Mumford stack over $k$. Furthermore:
	\begin{itemize}
		\item[(i)] $C$ is a scheme if and only if the morphism $\cZ \rightarrow B\GG_m$ induced by $\cL$ is representable and $i$ does not vanish on the stacky nodes, 
		\item[(ii)] $C$ is reduced if and only if $i$ is injective,
		\item[(iii)] $C$ has arithmetic genus $g$ if and only if $\chi(\cL)=-g$.
	\end{itemize}
\end{lemma}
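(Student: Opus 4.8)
The plan is to reduce everything to local computations on the twisted curve $\cZ$ and its coarse space, using the cyclic cover presentation $C=\spec_{\cZ}(\cA)$ with $\cA=\cO_{\cZ}\oplus\cL$. The first observation is that $C\to\cZ$ is finite, hence $C$ is proper over $k$ since $\cZ$ is; and $C$ is one-dimensional and Deligne--Mumford because $\cZ$ is a one-dimensional DM stack and $\cA$ is a coherent sheaf of algebras, finite over $\cO_{\cZ}$. For the rest, I would fix a geometric point of $\cZ$, pass to the strict henselization (or completion) of the local ring, and describe $\cA$ explicitly: $\cL$ is trivialized there, say $\cL\simeq\cO$, the section $i:\cL^{\otimes2}\to\cO_{\cZ}$ becomes multiplication by some $a$ in the local ring, and $\cA\simeq\cO[\tau]/(\tau^2-a)$.

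For \textit{(i)}, I would argue in two steps. Over the non-stacky locus $\cZ$ is already a scheme, so $C$ is too. At a stacky node, the local picture of $\cZ$ is $[\spec k[[u,v]]/(uv)\,/\,\mu_2]$ with $\mu_2$ acting by $(u,v)\mapsto(-u,-v)$, and a line bundle $\cL$ corresponds to a character of $\mu_2$; the morphism $\cZ\to B\GG_m$ induced by $\cL$ is representable exactly when this character is faithful, i.e. $\cL$ carries the nontrivial character. If $\cL$ is the nontrivial character and $i$ is nonzero on the node (so $a$ is a unit times something, in particular $a$ does not vanish identically on a branch), then $\cA$ is a free module on which $\mu_2$ acts so that the $\mu_2$-action on $C$ is free; concretely $C$ near such a point is the $\mu_2$-quotient presentation of an honest node $A_1$ of a scheme, matching case $(b_3)$/$(c_1)$-type local models of \Cref{prop:descr-inv}. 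Conversely, if $\cL$ is the trivial character at the stacky node, the generic stabilizer $\mu_2$ of $\cZ$ survives on $C$, so $C$ is not representable; and if $\cL$ is nontrivial but $i$ vanishes on the stacky node, then $\tau^2=a$ with $a\in\frkm^2$ in suitable coordinates and one checks the fiber of $C$ over the node acquires a nontrivial stabilizer as well (the cover is ramified there in the stacky sense). So $C$ is a scheme iff both conditions hold.

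For \textit{(ii)}: reducedness is local and is detected on the normalization of the branches, so working branch by branch on $\cZ$ we may assume $\cZ$ (locally) is $\spec R$ with $R$ a DVR or a nodal local ring and $\cA=R[\tau]/(\tau^2-a)$. If $i$ is not injective then $a=0$ on some branch, so there $\cA=R[\tau]/(\tau^2)$ is nonreduced. If $i$ is injective then $a$ is a nonzerodivisor on each branch; then $R[\tau]/(\tau^2-a)$ is a subring of $R[1/a][\tau]/(\tau^2-a)=R[1/a]\times R[1/a]$ (after adjoining $\sqrt a$, étale locally) or is a domain, in either case reduced. For \textit{(iii)}: since $C\to\cZ$ is finite and $f_*\cO_C=\cA=\cO_{\cZ}\oplus\cL$, we have $\chi(C,\cO_C)=\chi(\cZ,\cO_{\cZ})+\chi(\cZ,\cL)=\chi(\cO_{\cZ})+\chi(\cL)$; the coarse space of $\cZ$ has genus $0$ and one shows $\chi(\cZ,\cO_{\cZ})=1$ (twisted curves of genus $0$ with $\mu_2$ stabilizers have $\rH^0=k$, $\rH^1=0$ — the stacky structure does not change cohomology of the structure sheaf here), so $\chi(C,\cO_C)=1+\chi(\cL)$, hence $p_a(C)=1-\chi(\cO_C)=-\chi(\cL)$, which equals $g$ iff $\chi(\cL)=-g$.

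The main obstacle is \textit{(i)}, specifically the careful bookkeeping at the stacky nodes: one must match the three possible local behaviors of $(\cL,i)$ at a $B\mu_2$-point with the trichotomy "$C$ is a scheme with a node / $C$ is non-representable / $C$ has a stacky point", and for the last two one needs to exhibit the surviving $\mu_2$-stabilizer on $C$ rather than merely on $\cZ$. The computations in \Cref{sec:1}, in particular the local models in \Cref{prop:descr-inv} and the node-lifting in \Cref{lem:local-node-involution}, are exactly what is needed to identify the étale-local structure of $C$ at these points; parts \textit{(ii)} and \textit{(iii)} are then routine.
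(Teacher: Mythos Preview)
Your approach is essentially the same as the paper's, with a few differences worth noting.

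For (i), the paper is more direct: it simply restricts to the fiber over the closed point $B\mu_2\hookrightarrow\cZ$, which is the quotient of $\spec k[x]/(x^2-h)$ by $\mu_2$, where $x$ is the local generator of $\cL$ and $h\in k$ is the value of $i^{\vee}$ at the stacky point. Representability of this fiber is immediately equivalent to $h\neq 0$ together with $\mu_2$ acting nontrivially on $x$. You do not need the full local models from \Cref{prop:descr-inv} or \Cref{lem:local-node-involution} here; the residual computation suffices.

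For (ii), the paper observes that $\cZ$ is Cohen--Macaulay and $f$ is finite flat, so $C$ is Cohen--Macaulay; hence reduced is equivalent to generically reduced, and one only needs to check the generic points of components. Your branch-by-branch computation reaches the same conclusion but is slightly less clean.

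For (iii), there is a point you omit: the paper first argues that $\chi(\cL)\leq 0$ forces $C$ to be connected (if $C$ were disconnected, the involution would identify the two pieces, making $f$ the trivial double cover and forcing $\chi(\cL)=1$). Since in this paper ``arithmetic genus $g$'' is used for curves that will be $A_r$-stable, hence connected, this step is part of what (iii) is asserting in context. Your Euler-characteristic computation $\chi(\cO_C)=1+\chi(\cL)$ is correct, but you should add the connectedness argument to fully match the statement's intended meaning.
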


\begin{proof}
	 Because $C\rightarrow \cZ$ is finite, then $C$ is a proper one-dimensional Deligne-Mumford stack. To prove $(i)$, it is enough to prove that $C$ is an algebraic space, or equivalently it has trivial stabilizers. Because the map $f: C\rightarrow \cZ$ is affine, it is enough to check the fiber of the stacky points of $\cZ$. By the local description of the morphism, it is clear that if $p:B\mu_2 \hookrightarrow \cZ$ is a stacky point of $\cZ$, we have that the fiber $f^{-1}(p)$ is the quotient of the artinian algebra $k[x]/(x^2-h)$ by $\mu_2$, where $h \in k$. The usual description of the cyclic cover implies that $x$ is the local generator of $\cL$ and $h$ is the value of $i^{\vee}$ at the point $p$. The representability of the fiber then is equivalent to $h\neq 0$ and to the $\mu_2$-action being not trivial on $x$. Thus (i) follows.
	 	
	 Notice that $\cZ$ is clearly a Cohen-Macaulay stack and $f$ is finite flat and representable, therefore $C$ is also Cohen-Macaulay. This implies that $C$ is reduced if and only if it is generically reduced, i.e. the local ring of every irreducible component is a field. It is easy to see that this is equivalent to the fact that $i$ does not vanish in the generic point of any component.
	 	
	 As far as (iii) is concerned, firstly we prove that $\chi(\cL)\leq 0$ implies $C$ is connected. Suppose $C$ is the disjoint union of $C_1$ and $C_2$, then the involution has to send at least one irreducible component of $C_1$ to one irreducible component of $C_2$, otherwise the quotient $\cZ$ is not connected. But this implies that the restriction $f\vert_{C_1}$ is a finite flat representable morphism of degree $1$, therefore an isomorphism. In particular $f$ is a trivial \'etale cover of $\cZ$ and therefore $\chi(\cO_C)=2$, or equivalently $\chi(\cL)=1$. A straightforward computation now shows the equivalence.
	 	
\end{proof}
 
 \begin{remark}
 	Notice that $C$ is disconnected if and only if $\chi(\cO_C)=2$. We say that in this case $C$ has genus $-1$ and it is equivalent to $\chi(\cL)=1$.
 \end{remark}

Secondly, we have to take care of the $A_r$-prestable condition which makes sense only in the case when $C$ is a scheme. Therefore, we suppose $i^{\vee}$ does not vanish on the stacky points.

The $A_r$-prestable condition is encoded in the section $i^{\vee}\in \H^0(\cZ,\cL^{\otimes -2})$. The local description of the cyclic covers of degree $2$ (see \Cref{prop:descr-inv}) implies the following lemma.

\begin{lemma}
	In the situation above, $C$ is an $A_r$-stable curves if and only if $i^{\vee}$ has the following properties: 
	\begin{itemize}
		\item if it vanishes at a non-stacky node, then $r\geq 3$, $i^{\vee}$ vanishes with order $2$ and, locally at the node, it is not a zero divisor (there is a tacnode over the regular node),
		\item if it vanishes at a smooth point, than it vanishes with order at most $r$.
	\end{itemize}
\end{lemma}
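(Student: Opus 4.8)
The plan is to reduce everything to a local computation and to read off both conditions directly from \Cref{prop:descr-inv}. Since $C\to S$ is flat, proper and of finite presentation, and being $A_r$-stable is tested on geometric fibres (as recalled just above the statement), I would first assume $S=\spec k$ with $k$ algebraically closed. Now $C=\spec_{\cZ}(\cA)$ with $\cA=\cO_{\cZ}\oplus\cL$ is finite and flat of degree $2$ over the nodal — hence l.c.i.\ — twisted curve $\cZ$, so $C$ is l.c.i.; in particular $\omega_C$ is a line bundle and the ampleness clause in the definition makes sense. Over the complement of the branch locus $V(i^{\vee})\subset\cZ$ the map $f\colon C\to\cZ$ is \'etale, so there $C$ carries exactly the singularities of $\cZ$: smooth points over smooth points, ordinary nodes over non-stacky nodes, and separating nodes over the stacky nodes (using the standing hypotheses that $i^{\vee}$ does not vanish on the stacky locus and that $C$ is reduced, i.e.\ $i$ is injective). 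Thus everything comes down to the \'etale-local shape of $C$ over the two kinds of points of $\cZ$ that lie in $V(i^{\vee})$.

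\emph{At a smooth point $q\in V(i^{\vee})$.} There is a single point $p$ of $C$ over $q$; choosing a uniformiser $s$ of $\widehat{\cO}_{\cZ,q}=k[[s]]$ and a local generator $e$ of $\cL$ gives $\widehat{\cO}_{C,p}\simeq k[[s,e]]/(e^{2}-h)$ with $h$ the local form of $i^{\vee}$, vanishing to some order $m\ge 1$. Writing $h=s^{m}u$ with $u(0)\ne 0$: whenever the primes dividing $m$ are invertible in $k$ — which holds if $m\le r$ — one may replace $s$ by $s\,u^{1/m}$ and reach $k[[s,e]]/(e^{2}-s^{m})$, i.e.\ an $A_{m-1}$-singularity, admissible iff $m-1\le r$. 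Conversely, when $m>r$ either $A_{m-1}$ is simply not an $A_r$-singularity, or ($m=r+1$) the root extraction above can fail over a base $\kappa$ in which $r+1$ is not invertible and the resulting singularity need not be of type $A$. Hence the exact condition here is that $i^{\vee}$ vanish to order at most $r$.

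\emph{At a non-stacky node $q$ of $\cZ$.} Here $\widehat{\cO}_{\cZ,q}=k[[u,v]]/(uv)$ and the local form of $i^{\vee}$ is $c+uf(u)+vg(v)$. If $c\ne 0$ then $e^{2}-h$ splits and $C$ is a pair of ordinary nodes over $q$ (admissible once $r\ge 1$). If $c=0$, injectivity of $i$ forces $f\not\equiv 0$ and $g\not\equiv 0$ — otherwise $C$ is non-reduced along a branch — equivalently the local form of $i^{\vee}$ is not a zerodivisor; let $a,b\ge 1$ be its vanishing orders along the two branches. After normalising units one gets $\widehat{\cO}_{C,p}\simeq k[[u,v,e]]/(uv,\ e^{2}-u^{a}-v^{b})$, whose normalisation is the disjoint union of the normalisations of $k[[u,e]]/(e^{2}-u^{a})$ and of $k[[v,e]]/(e^{2}-v^{b})$; so for $(a,b)\ne(1,1)$ one obtains either at least three branches or exactly two branches one of which is singular — in neither case an $A_{n}$-singularity. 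When $a=b=1$, eliminating a variable gives $k[[u,e]]/\big(u(e^{2}-u)\big)$, which after completing the square is isomorphic to the tacnode $A_{3}$. Hence admissibility forces $a=b=1$ and $r\ge 3$, and these conditions conversely produce exactly an $A_{3}$ over $q$; this is the first bulleted condition (vanishing of order $2$, not a local zerodivisor, with $r\ge 3$).

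\emph{Assembly and the expected difficulty.} Combining the three cases yields that $C$ is $A_r$-prestable if and only if $i^{\vee}$ has the two stated properties; the residual ampleness of $\omega_C$ is then handled separately, using $\omega_C=f^{*}(\omega_{\cZ}\otimes\cL^{\vee})$ together with $\chi(\cL)=-g$, $g\ge 2$ and the Noether formula of \Cref{rem:genus-count}. The two points I expect to require real care are: (i) pinning the smooth-point bound down to ``$\le r$'' rather than ``$\le r+1$'', i.e.\ showing that a vanishing of order $r+1$ genuinely fails to be admissible over the base $\kappa$ at hand; and (ii) the exhaustive node analysis — ruling out, through the explicit models above and \Cref{prop:descr-inv}, every configuration that creates a third branch, a singular branch, or a local zerodivisor, since those are precisely the ways to leave the class of $A_n$-singularities or to destroy reducedness.
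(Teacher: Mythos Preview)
Your approach is exactly the paper's: the paper's entire proof is the sentence ``The local description of the cyclic covers of degree $2$ (see \Cref{prop:descr-inv}) implies the following lemma'', and your \'etale-local case analysis is precisely how one unpacks that reference. The node analysis is correct (and can be said even more cleanly: if $a,b\ge 2$ the embedding dimension is $3$, so not a plane curve; if $a=1$, $b\ge 2$ the plane model $v(e^2-v^b)$ has multiplicity $3$; only $a=b=1$ gives multiplicity $2$ and one checks it is $A_3$).

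There is, however, a genuine gap in your smooth-point argument, and it is exactly the point you flag in (i). Your computation shows that vanishing order $m$ produces $k[[s,e]]/(e^2-s^m)$, i.e.\ an $A_{m-1}$-singularity, so the correct bound for ``at most $A_r$'' is $m\le r+1$, not $m\le r$. Your attempt to recover ``$\le r$'' via a characteristic obstruction does not work: the failure of the \emph{particular} coordinate change $s\mapsto s\,u^{1/m}$ says nothing about whether $k[[s,e]]/(e^2-s^m u)$ and $A_{m-1}$ are isomorphic --- you would have to rule out \emph{all} automorphisms of $k[[s,e]]$, which you do not do. In fact the paper itself uses the bound $r+1$ when it writes the condition carefully: see \Cref{def:hyp-A_r}(b3), ``length at most $r+1$''. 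So the discrepancy you noticed is an off-by-one slip in the lemma's wording, not something your proof needs to (or can) justify.

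A smaller point: the word ``stable'' in the statement should really read ``prestable'' --- the paper treats the ampleness of $\omega_C$ separately in the paragraphs that follow, culminating in conditions (c1)--(c2) of \Cref{def:hyp-A_r}. Your one-line formula $\omega_C=f^{*}(\omega_{\cZ}\otimes\cL^{\vee})$ together with $g\ge 2$ is not enough to conclude ampleness component-by-component (the paper needs the combinatorial conditions on $g_\Gamma$, $m_\Gamma$, $n_\Gamma$), so that sentence should simply be dropped rather than expanded.
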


Lastly, we want to understand how to describe the stability condition. For this, we need a lemma.

\begin{lemma}
	Let $(C,\sigma)$ be an $A_r$-prestable hyperelliptic curve of genus $g\geq 2$ over an algebraically closed field such that the geometric quotient $Z:=C/\sigma$ is integral of genus $0$, i.e. a projective line. Then $(C,\sigma)$ is $A_r$-stable. 
	
	Furthermore, suppose instead that $g=1$ and let $p_1,p_2$ be two smooth points such that $\sigma(p_1)=p_2$. Then $(C,p_1,p_2)$ is stable.
\end{lemma}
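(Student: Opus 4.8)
Since $C$ is l.c.i., $\omega_C$ — and, the $p_i$ being Cartier, also $\omega_C(p_1+p_2)$ — is a line bundle, so by the Nakai--Moishezon criterion on a projective curve it suffices to show it has strictly positive degree on every irreducible component of $C$. The plan is to determine the possible shapes of $C$ and then carry out that degree computation component by component.

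First I would note that $C$ has at most two irreducible components. Indeed $\sigma$ is non-trivial (else $Z = C$ would have genus $g \geq 1 \neq 0$), so the finite quotient morphism $\pi\colon C \to Z$ is generically of degree $2$; since $\pi$ has finite fibres each component of $C$ surjects onto the irreducible curve $Z$, and hence the generic degrees of the components add up to $2$. If $C$ is irreducible we are done at once: $\deg\omega_C = 2g-2 > 0$ when $g \geq 2$, while for $g = 1$ we have $\deg\omega_C = 0$ and $\deg\bigl(\omega_C(p_1+p_2)\bigr) = 2 > 0$; in either case the relevant bundle has positive degree on the unique component. If $C$ is reducible, write $C = C_1 \cup C_2$; each $C_i \to Z$ is finite and birational, hence — $\PP^1$ being normal — an isomorphism $C_i \cong \PP^1$. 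Let $q_1,\dots,q_s$ be the points of $C_1 \cap C_2$; since $C$ is connected $s \geq 1$, and, being two-branch singularities, the $q_j$ are of type $A_{2m_j+1}$ for some $m_j \geq 0$. A conductor computation for $A_{2m+1}$ — the conductor cuts out the divisor $(m_j+1)q_j$ on each branch — combined with Noether's formula $b^{*}\omega_C \cong \omega_{\widetilde C}(J_b^{\vee})$ shows $\deg(\omega_C|_{C_i}) = -2 + \sum_{j=1}^{s}(m_j+1)$, while \Cref{rem:genus-count} applied at each $q_j$ gives $g = \sum_{j=1}^{s} m_j + (s-1)$. Combining these, $\deg(\omega_C|_{C_i}) = g-1$ for $i = 1,2$.

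If $g \geq 2$ then $\deg(\omega_C|_{C_i}) = g-1 \geq 1 > 0$ on each component, so $\omega_C$ is ample and $(C,\sigma)$ is $A_r$-stable. If $g = 1$ this degree is $0$, so I must check that each of $C_1, C_2$ carries one of $p_1, p_2$; granting this, $\deg\bigl(\omega_C(p_1+p_2)|_{C_i}\bigr) = 1 > 0$ and $(C,p_1,p_2)$ is stable. To obtain it I would show that $\sigma$ interchanges $C_1$ and $C_2$: if on the contrary $\sigma(C_i) = C_i$, then $\sigma_i := \sigma|_{C_i}$ is a non-trivial involution of $\PP^1$ (non-trivial because $\sigma$ has finite fixed locus), and $C_i \to Z$ factors as $C_i \to C_i/\sigma_i \to Z$; but $C_i \to Z$ has degree $1$, forcing $C_i \to C_i/\sigma_i$ to have degree $1$, i.e. $\sigma_i = \id$, a contradiction. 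Hence $\sigma(C_1) = C_2$, and since $\sigma(p_1) = p_2$ with $p_1$ a smooth point of $C$ (so lying on a single component), $p_1$ and $p_2$ lie on distinct components, as required.

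The genuinely delicate part is the reducible case: the conductor/Noether computation of $\deg(\omega_C|_{C_i})$ in the presence of arbitrarily high tacnodal contact, and — for $g = 1$ — the argument that the hyperelliptic involution must swap the two components, which is exactly what excludes both marked points landing on a single $\PP^1$ (the only configuration in which stability could fail). Everything else is a routine application of Nakai--Moishezon.
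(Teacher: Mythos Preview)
Your proof is correct and follows essentially the same line as the paper's: reduce to at most two components via the degree-$2$ quotient map, identify each component with $\PP^1$ in the reducible case, and compute $\deg(\omega_C|_{C_i}) = g-1$ via the conductor and the genus count. You are somewhat more explicit than the paper in two places --- you actually prove that $\sigma$ must swap the two components (the paper simply asserts this), and you do the conductor computation directly rather than citing Catanese's Lemma~1.12 --- but the structure and the key numerical identity are the same.
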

\begin{proof}
	If the curve $C$ is integral, there is nothing to prove. Suppose $C$ is not, then it has two irreducible components $C_1$ and $C_2$ of genus $0$ that has to be exchanged by the involution and their intersection $C_1 \cap C_2$ is a disjoint union of $A_{2k+1}$-singularities for $2k+1\leq r$. Let $p_1,\dots,p_h$ be the support of the intersection and $k_1,\dots,k_h$ integers such that $p_i$ is a $A_{2k_i+1}$-singularity for $i=1,\dots,h$. By \Cref{rem:genus-count}, we have that 
	$$  k_1+\dots+k_h-1=g\geq 2$$ 
	but at the same time Lemma 1.12 of \cite{Cat} implies
	$$\deg\omega_C\vert_{C_j}=-2+k_1+\dots+k_h=g-1>0$$
    for $j=1,2$. 
    
    Suppose now $g=1$. If $C$ is integral, then there is nothing to prove. If $C$ is not integral, again it has two irreducible components of genus $0$ such that their intersection is either two nodes or a tacnode, because of \Cref{rem:genus-count} (see the proof of \Cref{lem:genus1} for a more detailed discussion). Then again the statement follows.
\end{proof}

\begin{remark}
	In the previous lemma, we can take $p:=p_1=p_2$ to be a fixed smooth point of the hyperelliptic involution. In this case, $C$ has to be integral and therefore $(C,p)$ is $A_r$-stable. 
\end{remark}

The stability condition makes sense when $C$ is Gorestein, and in particular when it is $A_r$-prestable. Therefore suppose we are in the situation when $C$ is a $A_r$-prestable curve. We translate the stability condition on $C$ to a condition on the restrictions of $i$ to the irreducible components of $\cZ$.

Given an irreducible component $\Gamma$ of $\cZ$, we can define a quantity $$g_{\Gamma}:=\frac{n_{\Gamma}}{2}-\deg \cL\vert_{\Gamma}-1$$ where $n_{\Gamma}$ is the number of stacky points of $\Gamma$. It is easy to see that  $g_{\Gamma}$ coincides with the arithmetic genus of the preimage $C_{\Gamma}:=f^{-1}(\Gamma)$, when it is connected. The previous lemma implies that $\omega_{C}\vert_{C_{\Gamma}}$ is ample for all the components $\Gamma$ such that $g_{\Gamma}\geq 1$ . 

Let us try to understand the stability condition for $g_{\Gamma}=0$, i.e. $\deg\cL\vert_{\Gamma}=n_{\Gamma}/2-1$. Let $m_{\Gamma}$ be the number of points of the intersection of $\Gamma$ with the rest of the curve $\cZ$, or equivalently the number of nodes (stacky or not) lying on the component. Then the stability condition translates to $2m_{\Gamma}-n_{\Gamma}\geq 3$, because the fiber of the morphism $C\rightarrow \cZ$ of every non-stacky node of $\cZ$ is of length $2$ (either two disjoint nodes or a tacnode) in $C$ while the fiber of a stacky node ($B\mu_2 \hookrightarrow \cZ$) has length $1$.

Suppose now that the curve $C_{\Gamma}$ is disconnected. Thus $C_{\Gamma}$ is the disjoint union of two projective line with an involution that exchange them. In this case $\cL\vert_{\Gamma}$ is trivial but also $n_{\Gamma}=0$. Stability condition on $C$ is equivalent to $m_{\Gamma}\geq 3$.  Notice that $g_{\Gamma}=-1$ if and only if $C_{\Gamma}$ is disconnected, or equivalently it is the \'etale trivial cover of projective line. 

This motivates the following definition.

\begin{definition}\label{def:hyp-A_r}
	Let $\cZ$ be a twisted nodal curve over an algebraically closed field. We denote by $n_{\Gamma}$ the number of stacky points of $\Gamma$ and by $m_{\Gamma}$ the number of intersections of $\Gamma$ with the rest of the curve for every $\Gamma$ irreducible component of $\cZ$. Let $\cL$ be a line bundle on $\cZ$ and $i:\cL^{\otimes 2} \rightarrow \cO_{\cZ}$ be a morphism of $\cO_{\cZ}$-modules.  We denote by $g_{\Gamma}$ the quantity $n_{\Gamma}/2-1-\deg\cL\vert_{\Gamma}$. 
	\begin{itemize}
	\item[(a)] We say that $(\cL,i)$ is hyperelliptic if the following are true:
			\begin{itemize}
				\item[(a1)] the morphism $\cZ \rightarrow B\GG_m$ induced by $\cL$ is representable,
				\item[(a2)] $i^{\vee}$ does not vanish restricted to any stacky point.
			\end{itemize}
	\item[(b)] We say that $(\cL,i)$ is $A_r$-prestable and hyperelliptic of genus $g$ if $(\cL,i)$ is hyperelliptic, $\chi(\cL)=-g$ and the following are true:
		\begin{itemize}
			\item[(b1)] $i^{\vee}$ does not vanish restricted to any irreducible component of $\cZ$ or equivalently the morphism $i:\cL^{\otimes 2 }\rightarrow \cO_{\cZ}$ is injective,
			\item[(b2)] if $p$ is a non-stacky node and $i^{\vee}$ vanishes at $p$, then $r\geq 3$ and the vanishing locus $\VV(i^{\vee})_p$ of $i^{\vee}$ localized at $p$ is a Cartier divisor of length $2$;
			\item[(b3)] if $p$ is a smooth point and $i^{\vee}$ vanishes at $p$, then the vanishing locus $\VV(i^{\vee})_p$ of $i^{\vee}$ localized at $p$ has length at most $r+1$.
		\end{itemize}	
	\item[(c)] We say that $(\cL,i)$ is $A_r$-stable and hyperelliptic of genus $g$ if it is $A_r$-prestable and hyperelliptic of genus $g$ and the following are true for every irreducible component $\Gamma$ in $\cZ$:
			\begin{itemize}
				\item[(c1)] if $g_{\Gamma}=0$ then we have $2m_{\Gamma}-n_{\Gamma}\geq 3$,
				\item[(c2)] if $g_{\Gamma}=-1$ then we have
				$m_{\Gamma}\geq 3$ ($n_{\Gamma}=0$).
			\end{itemize}
\end{itemize}
\end{definition}

Let us define now the stack classifying these data. We denote by $\cC(2,g,r)$ the fibered category defined in the following way: the objects are triplets $(\cZ\rightarrow S,\cL,i)$ where $\cZ \rightarrow S$ is a family of twisted curves of genus $0$, $\cL$ is a line bundle of $\cZ$ and $i:\cL^{\otimes 2}\rightarrow \cO_{\cZ}$ is a morphism of $\cO_{\cZ}$-modules such that the restrictiond $(\cL_s,i_s)$ to the geometric fibers over $S$ are $A_r$-stable and hyperelliptic of genus $g$.  Morphisms are defined as in \cite{ArVis}. We have proven the following equivalence.

\begin{proposition}\label{prop:descr-hyper}
	The fibered category $\cC(2,g,r)$ is isomorphic to $\cC(2,g,r)'$.
\end{proposition}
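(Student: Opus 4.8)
The plan is to deduce the equivalence from the theory of cyclic covers of degree $2$ (see \cite{ArVis}) together with the three lemmas proved just above, which translate the fibrewise geometric conditions defining $\cC(2,g,r)'$ into the numerical and divisorial conditions of \Cref{def:hyp-A_r}.

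First, recall the unrestricted correspondence. For any family of twisted curves $\cZ\arr S$ of genus $0$, \cite{ArVis} provides an equivalence between the groupoid of finite flat morphisms $f\colon C\arr\cZ$ of degree $2$ endowed with a $\mu_2$-action over $\cZ$ --- equivalently, a $\ZZ/(2)$-grading on $f_*\cO_C$, which exists and is uniquely determined by $f$ since $2$ is invertible in $\kappa$ --- and the groupoid of pairs $(\cL,i)$ with $\cL$ a line bundle on $\cZ$ and $i\colon\cL^{\otimes 2}\arr\cO_{\cZ}$ a morphism of $\cO_{\cZ}$-modules: in one direction $\cL$ is the summand of $f_*\cO_C$ on which $\mu_2$ acts nontrivially and $i$ is induced by multiplication, in the other $C=\spec_{\cZ}(\cO_{\cZ}\oplus\cL)$ with algebra structure given by $i$. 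This correspondence is compatible with base change and with morphisms of twisted curves, and it carries the datum over $S$ underlying an object of $\cC(2,g,r)'$ to a triplet $(\cZ\arr S,\cL,i)$ of the shape occurring in $\cC(2,g,r)$, and conversely. Hence $\cC(2,g,r)$ and $\cC(2,g,r)'$ are both full subcategories --- cut out by fibrewise conditions --- of one and the same fibered category, and it is enough to check that an object lies in the first subcategory exactly when the corresponding object lies in the second.

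Since the algebra $\cO_{\cZ}\oplus\cL$ is locally free of rank $2$ over $\cO_{\cZ}$ and $\cZ\arr S$ is flat, proper and of finite presentation, the structure morphism $C\arr S$ is automatically flat, proper and of finite presentation; consequently ``$C\arr S$ is a family of $A_r$-stable genus $g$ curves'' and ``$f$ is generically \'etale'' may be tested on geometric fibres, as may the conditions of \Cref{def:hyp-A_r}, so we reduce to $S=\spec k$ with $k$ algebraically closed. Now the three lemmas preceding the proposition supply the dictionary: $C$ is a scheme if and only if $\cZ\arr B\GG_m$ induced by $\cL$ is representable and $i^{\vee}$ is non-zero at every stacky node --- the only stacky points of an unmarked twisted genus-$0$ curve --- that is, (a1) and (a2); $C$ is reduced, equivalently $f$ is generically \'etale, equivalently $i$ is injective, that is, (b1); $C$ is connected of arithmetic genus $g$ if and only if $\chi(\cL)=-g$; $C$ is $A_r$-prestable if and only if at each node in the zero locus of $i^{\vee}$ the section vanishes to order $2$ non-degenerately (forcing $r\geq 3$) and at each smooth zero its vanishing locus has length at most $r+1$, that is, (b2) and (b3); and, using that the geometric quotient of each $C_{\Gamma}:=f^{-1}(\Gamma)$ by the hyperelliptic involution is a smooth rational curve, ampleness of $\omega_C$ is equivalent to $2m_{\Gamma}-n_{\Gamma}\geq 3$ when $g_{\Gamma}=0$ and to $m_{\Gamma}\geq 3$ when $g_{\Gamma}=-1$, that is, (c1) and (c2). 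Therefore $f\colon C\arr\cZ$ is an object of $\cC(2,g,r)'$ precisely when $(\cZ,\cL,i)$ is an object of $\cC(2,g,r)$, and the two quasi-inverse functors of the cyclic-cover correspondence restrict to quasi-inverse functors between $\cC(2,g,r)$ and $\cC(2,g,r)'$.

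The substance of the argument lies entirely in this fibrewise dictionary, which has already been established in the lemmas above; the only point deserving a little care is the verification that being a family of $A_r$-stable curves of genus $g$, and being generically \'etale, are genuinely conditions on geometric fibres, which rests on the flatness and properness of $C\arr S$ noted above.
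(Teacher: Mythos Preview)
Your proof is correct and follows essentially the same approach as the paper: the paper's own ``proof'' consists only of the sentence ``We have proven the following equivalence,'' referring back to the cyclic-cover dictionary of \cite{ArVis} and the three preceding lemmas, which is exactly the argument you have spelled out. Your version is in fact more explicit than the paper's in articulating why the comparison reduces to geometric fibres and in organizing the dictionary condition by condition.
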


We can use this description to get the smoothness of $\Htilde_g^r$. Firstly, we need to understand what kind of line bundles $\cL$ on $\cZ$ can appear in $\cC(2,g,r)$.

\begin{lemma}\label{lem:hyp-line-bun}
	Let $Z$ be a nodal curve of genus $0$ over an algebraically closed field $k/\kappa$, $\cL$ a line bundle on $Z$ and $s \in \H^0(Z,\cL)$. We consider the following assertions:
	\begin{itemize}
		\item[$(i)$] $s$ does not vanish identically on any irreducible component of $Z$,
		\item[$(ii)$] $\cL$ is globally generated,
		\item[$(ii')$] $\deg \cL\vert_{\Gamma}\geq 0$ for every $\Gamma$ irreducible component of $Z$,
		\item[$(iii)$] $\H^1(Z,\cL)=0$;
	\end{itemize}
	then we have $(i)\implies (ii)\iff (ii')\implies (iii)$.
\end{lemma}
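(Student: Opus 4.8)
\emph{Proof proposal.} The plan is: first describe $Z$ explicitly, then dispatch the elementary implications $(i)\implies(ii')$ and $(ii)\implies(ii')$, and finally prove the two implications with actual content, $(ii')\implies(ii)$ and $(ii')\implies(iii)$; combining $(i)\implies(ii')\implies(ii)$, $(ii)\implies(ii')$ and $(ii')\implies(iii)$ then yields exactly the chain claimed. Since $Z$ is a reduced connected nodal curve with $p_a(Z)=0$, the normalization sequence $0\to\cO_Z\to\nu_*\cO_{\widetilde Z}\to\bigoplus_{\text{nodes}}k\to 0$ for $\nu\colon\widetilde Z\to Z$ forces each component of $\widetilde Z$ to have genus $0$ and the dual graph of $Z$ to be a tree; in particular every irreducible component $\Gamma\subset Z$ is isomorphic to $\PP^1$, has no self-node, and the configuration of components is a tree. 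Given this, $(i)\implies(ii')$ because $s\vert_\Gamma$ is a nonzero global section of $\cL\vert_\Gamma$ for every $\Gamma$, and a line bundle on $\PP^1$ carrying a nonzero section has nonnegative degree; and $(ii)\implies(ii')$ because the restriction of a globally generated sheaf is globally generated, and a globally generated line bundle on $\PP^1$ has nonnegative degree.

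For $(ii')\implies(iii)$ I would induct on the number $c$ of irreducible components of $Z$. The base case $c=1$ is $Z\cong\PP^1$, $\cL\cong\cO(d)$ with $d\geq 0$, hence $\H^1(Z,\cL)=0$. For $c\geq 2$ choose a leaf $\Gamma$ of the dual graph; it meets the union $Z'$ of the remaining components in a single node $p$, and $Z'$ is again a connected nodal curve of genus $0$ with $\deg(\cL\vert_{Z'})\vert_{\Gamma'}\geq 0$ on every component $\Gamma'$. From the Mayer--Vietoris sequence
\[ 0\longrightarrow\cL\longrightarrow\cL\vert_{Z'}\oplus\cL\vert_\Gamma\longrightarrow\cL\otimes k(p)\longrightarrow 0 \]
and its cohomology long exact sequence, $\H^1(Z,\cL)$ is an extension of $\H^1(Z',\cL\vert_{Z'})\oplus\H^1(\Gamma,\cL\vert_\Gamma)$ by the cokernel of $\H^0(\cL\vert_{Z'})\oplus\H^0(\cL\vert_\Gamma)\to\cL\otimes k(p)$. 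The evaluation $\H^0(\Gamma,\cL\vert_\Gamma)\to\cL\otimes k(p)$ is already surjective since $\deg\cL\vert_\Gamma\geq 0$ on $\PP^1$, the same bound gives $\H^1(\Gamma,\cL\vert_\Gamma)=0$, and $\H^1(Z',\cL\vert_{Z'})=0$ by induction. Hence $\H^1(Z,\cL)=0$.

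For $(ii')\implies(ii)$ I would show directly that for each closed point $q\in Z$ there is a global section of $\cL$ not vanishing at $q$. Pick a component $\Gamma_0$ through $q$ and, using $\deg\cL\vert_{\Gamma_0}\geq 0$, a section $s_0\in\H^0(\Gamma_0,\cL\vert_{\Gamma_0})$ with $s_0(q)\neq 0$. Then extend $s_0$ component by component outward along the tree: if $q$ is a node lying also on a component $\Gamma_0'$, first extend over $\Gamma_0'$ prescribing the value $s_0(q)$ at $q$ (possible because evaluation at a point is surjective for a nonnegative-degree line bundle on $\PP^1$), and thereafter each new component $\Gamma$ meets the already constructed subcurve in exactly one node $p_\Gamma$ by the tree property, so one may choose a section of $\cL\vert_\Gamma$ with the prescribed value at $p_\Gamma$ and glue. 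The result is $s\in\H^0(Z,\cL)$ with $s(q)\neq 0$, so $\cL$ is globally generated.

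The whole argument reduces to elementary facts about $\cO(d)$ on $\PP^1$ (nonnegative degree $\iff$ globally generated $\iff$ evaluation at a point surjective, and vanishing of $\H^1$ for $d\geq 0$); the one point that genuinely needs care is the bookkeeping in $(ii')\implies(ii)$, namely that the tree shape of the dual graph --- which is precisely the genus $0$ hypothesis --- ensures ``extending outward'' imposes a single gluing condition at each step and never produces an inconsistent cycle of conditions. A possible alternative for $(ii')\implies(ii)$ is to invoke $(iii)$ for the twists $\cL(-q)$ at smooth points $q$ and handle the finitely many nodes by a parallel partial-normalization computation, but the direct section-building argument seems cleaner.
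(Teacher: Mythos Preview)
Your proof is correct and follows essentially the same approach as the paper: both exploit the tree structure of the dual graph, prove $(ii')\implies(iii)$ by a Mayer--Vietoris induction on the number of components, and prove $(ii')\implies(ii)$ by extending a section outward along the tree one component at a time. The only cosmetic differences are that the paper splits $Z$ at an arbitrary node rather than at a leaf for $(iii)$, and frames the section-extension for $(ii)$ as an induction (leaving the nodal-point case as an exercise, which you handle explicitly).
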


\begin{proof}
	It is easy to prove that $(i) \implies(ii')$ and $(ii)\implies (ii')$. To prove that $(ii')$ implies $(ii)$ we proceed by induction on the number of components of $Z$. Let $p$ a smooth point on $Z$ and $\Gamma_p$ the irreducible component which cointains $p$. Then the morphism $$\H^0(\Gamma_p,\cL\vert_{\Gamma_p})\longrightarrow k(p)$$ is clearly surjective because $\deg \cL\vert_{\Gamma_p}\geq 0$ and $\Gamma \simeq \PP^1$. Therefore it is enough to extend a section from $\Gamma_p$ to a section to the whole $Z$. 
	
	Because the dual graph of $Z$ is a tree, we know that $Z$ can be obtained by gluing a finite number of genus $0$ curve $Z_i$ to $\Gamma_p$ such that these curves are disjoint. Hence it is enough to find a section for every $Z_i$ such that it glues in the point of intersection with $\Gamma_p$. Because everyone of the $Z_i$'s has fewer irreducible components than $Z$, we are done. The case when $p$ is singular is left as an exercise to the reader.  
	
	Finally, we need to prove that $(ii')\implies (iii)$. We know there exists a decomposition $Z=Z_1\cup Z_2$ where $Z_1$ and $Z_2$ are nodal genus $0$ curves such that $Z_1\cap Z_2$ has length $1$. Let $i_h:Z_h \hookrightarrow Z$ the closed embedding for $h=1,2$. Thus if we can consider the exact sequence of vector spaces 
	$$ 0\rightarrow \H^0(\cL)\rightarrow \H^0(i_1^*\cL)\oplus\H^0(i_2^*\cL) \rightarrow k \rightarrow \H^1(\cL) \rightarrow \H^1(i_1^*\cL)\oplus \H^1(i_2^*\cL) \rightarrow 0;$$  
	it is clear that the morphism $\H^0(i_1^*\cL)\oplus\H^0(i_2^*\cL) \rightarrow k$ is surjective (both $i_1^*\cL$ and $i_2^*\cL$ are globally generated) and therefore $\H^1(Z,\cL)=\H^1(Z_1,i_1^*\cL)\oplus \H^1(Z_2,i_2^*\cL)$. We get that 
	$$\H^1(Z,\cL)=\bigoplus_{\Gamma} \H^1(\Gamma, \cL\vert_{\Gamma})$$
	where the sum is indexed over the irreducible components $\Gamma$ of $Z$. Thus it is enough to prove that $\H^1(\Gamma,\cL\vert_{\Gamma})=0$ for every $\Gamma$ irreducible component of $Z$, which follows from $(iii))$.
\end{proof} 

\begin{proposition}\label{prop:smooth-hyp}
	The moduli stack $\Htilde_g^r$ of $A_r$-stable hyperelliptic curves of genus $g$ is smooth and the open $\cH_g$ parametrizing smooth hyperelliptic curves is dense in $\Htilde_g^r$. In particular $\Htilde_g^r$ is connected.
\end{proposition}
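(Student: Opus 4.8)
The plan is to deduce everything from the equivalence $\Htilde_g^r\cong\cC(2,g,r)$ provided by \Cref{prop:cyclic-covers} and \Cref{prop:descr-hyper}, reducing the assertions about $\Htilde_g^r$ to assertions about triples $(\cZ,\cL,i)$. For smoothness, the idea is to present $\cC(2,g,r)$ as an open substack of the total space of a vector bundle over a smooth base. First I would recall that the stack $\mathfrak{M}_0^{\mathrm{tw}}$ of twisted curves of genus $0$ (with $\mmu_2$ stabilizers and no markings) is smooth by \cite{AbOlVis}, and that the relative Picard stack $\mathrm{Pic}\to\mathfrak{M}_0^{\mathrm{tw}}$ is smooth, since for any curve $\cZ$ one has $\H^2(\cZ,\cO_\cZ)=0$ for dimension reasons, so deformations of a line bundle are unobstructed. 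Hence the stack $\mathfrak T$ of pairs $(\cZ,\cL)$, with $\cZ$ a twisted genus $0$ curve and $\cL$ a line bundle on it, is smooth.

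Next, on the locus $U\subseteq\mathfrak T$ where $\H^1(\cZ,\cL^{\otimes-2})=0$ fibrewise — open by semicontinuity — the sheaf $\pi_*\cL^{\otimes-2}$ is locally free and commutes with base change, so its total space $\mathbb V(\pi_*\cL^{\otimes-2})\to U$ is smooth. A section $i\colon\cL^{\otimes2}\to\cO_\cZ$ is precisely a point of this total space, and \Cref{lem:hyp-line-bun} together with condition (b1) of \Cref{def:hyp-A_r} (that $i^\vee$ vanishes identically on no irreducible component of $\cZ$) shows that every object of $\cC(2,g,r)$ lies over $U$. Since the remaining defining conditions of \Cref{def:hyp-A_r} — representability of $\cZ\to B\gm$, nonvanishing of $i^\vee$ at the stacky points, the bounds on the order of vanishing of $i^\vee$ at nodes and smooth points, and the combinatorial inequalities (c1)--(c2) — together with the constancy of $\chi(\cL)=-g$ are all open conditions, $\cC(2,g,r)$ is an open substack of $\mathbb V(\pi_*\cL^{\otimes-2})$, hence smooth. (Equivalently, one checks directly that for a triple $(\cZ,\cL,i)$ all infinitesimal deformations are unobstructed: those of $\cZ$ by smoothness of $\mathfrak{M}_0^{\mathrm{tw}}$, those of $\cL$ because $\H^2(\cZ,\cO_\cZ)=0$, and those of $i$ because $\H^1(\cZ,\cL^{\otimes-2})=0$.)

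For the density of $\cH_g$, given a geometric point $(\cZ_0,\cL_0,i_0)$ of $\cC(2,g,r)\cong\Htilde_g^r$, I would build a one-parameter deformation over a DVR with smooth hyperelliptic generic fibre. Smoothing all the nodes of $\cZ_0$ at once produces a twisted curve over the DVR with generic fibre $\PP^1$; the key local computation is that a stacky $\mmu_2$-node, with model $[\spec k[x,y]/(xy)\,/\,\mmu_2]$, smooths to an ordinary smooth point, since $\{xy=t\}/\mmu_2\cong\spec k[u,v]/(uv-t^2)$ is regular for $t\ne0$. Using smoothness of $\mathrm{Pic}$ and the vanishing of $R^1\pi_*\cL^{\otimes-2}$ near the special fibre, one extends $\cL_0$ and $i_0$ over the family; on the generic fibre $\cZ=\PP^1$, and $\chi(\cL)=-g$ forces $\cL\cong\cO_{\PP^1}(-g-1)$, with $i^\vee\in\H^0(\PP^1,\cO(2g+2))$. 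After a further perturbation of $i^\vee$ — sections with $2g+2$ distinct roots form an open dense subset, and for $\cZ=\PP^1$ one has $g_\Gamma=g\ge2$, so the stability conditions (c1)--(c2) and the vanishing conditions (b2)--(b3) are automatic — the generic fibre is the double cover of $\PP^1$ branched at $2g+2$ distinct points, a smooth hyperelliptic curve of genus $g$ by Riemann--Hurwitz. Thus $(\cZ_0,\cL_0,i_0)\in\overline{\cH_g}$.

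Finally, $\cH_g$ is irreducible, being dominated by an open dense subset of the affine space $\H^0(\PP^1,\cO(2g+2))$ modulo the connected group $\PGL_2\times\gm$; and $\Htilde_g^r$ is smooth, hence reduced, so the previous paragraph yields $\Htilde_g^r=\overline{\cH_g}$, which is therefore irreducible, in particular connected, and $\cH_g$ is dense. The step I expect to be the main obstacle is the density argument: one must be careful in constructing the smoothing of the twisted curve $\cZ_0$ and in simultaneously propagating the pair $(\cL,i)$ over the family, and in verifying that the generic member is genuinely smooth rather than merely less singular — everything else reduces to the cohomological vanishing of \Cref{lem:hyp-line-bun} and to the smoothness of the ambient stacks of twisted curves and of line bundles.
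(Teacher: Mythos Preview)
Your proof is correct and follows essentially the same strategy as the paper: both realize $\cC(2,g,r)$ as an open substack of the total space of $\pi_*\cL^{\otimes -2}$ over the smooth stack of pairs $(\cZ,\cL)$, using \Cref{lem:hyp-line-bun} to land in the locus where $\H^1$ vanishes, and then prove density of $\cH_g$ by smoothing $\cZ$ over a DVR, lifting $(\cL,i)$, and perturbing $i^\vee$ to have simple roots. Two small points: the paper justifies openness of $\cC(2,g,r)$ in the vector bundle by appealing directly to the deformation theory of $A_r$-prestable curves (i.e.\ to the openness of ``$C$ is $A_r$-stable'') rather than checking each clause of \Cref{def:hyp-A_r} separately---your claim that (c1)--(c2) are open is true but is most cleanly seen this way; and \Cref{lem:hyp-line-bun} is stated for ordinary nodal genus-$0$ curves, so to apply it to a twisted $\cZ$ one observes that $\cL^{\otimes -2}$ descends to the coarse space and that $\cZ\to Z$ is cohomologically affine.
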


\begin{proof}
We use the description of $\Htilde_g^r$ as $\cC(2,g,r)$. First of all, let us denote by $\cP$ the moduli stack parametrizing the pair $(\cZ,\cL)$ where $\cZ$ is a twisted curve of genus $0$ and $\cL$ is a line bundle on $\cZ$. Consider the natural morphism $\cP \rightarrow \cM_0^{\rm tw}$ defined by the association $(\cZ,\cL)\mapsto \cZ$, where $\cM_0^{\rm tw}$ is the moduli stack of twisted curve of genus $0$ (see Section 4.1 of \cite{AbGrVis}).Proposition 2.7 \cite{AbOlVis} implies the  formal smoothness of this morphism. We have the smoothness of $\cM_0^{\rm tw}$ thanks to Theorem A.6 of \cite{AbOlVis}. Therefore $\cP$ is formally smooth over the base field $\kappa$.

 Let us consider now $\cP_g^0$, the substack of $\cP_g$ whose geometric objects are pairs $(\cZ,\cL)$ such that $\chi(\cL)=-g$ and $\H^1(\cZ,\cL^{\otimes -2})=0$. To be precise, its objects consist of families of twisted curves $\cZ\rightarrow S$ and a line bundle $\cL$ on $\cZ$ such that $\H^1(\cZ_s,\cL_s^{\otimes -2})=0$ for every $s \in S$. Because the Euler characteristic $\chi$ is locally constant for families of line bundles, the semicontinuity of the $\H^1$  implies that $\cP_g^0$ is an open inside $\cP$, therefore it is formally smooth over $\kappa$.

 We have a morphism 
 $$ \Htilde_g^r\simeq \cC(2,g,r) \rightarrow \cP$$ 
 defined by the association $(\cZ,\cL,i) \mapsto (\cZ,\cL)$. This factors through $\cP_g^0$ because of \Cref{lem:hyp-line-bun}. Consider the universal object $(\pi:\cZ_{\cP}\rightarrow \cP_g^0,\cL_{\cP})$ over $\cP_g^0$. Then $\cL_{\cP}^{\otimes -2}$ satisfies base change by construction and therefore we have that $\cC(2,g,r)$ is a substack of $\VV(\pi_*\cL_{\cP}^{\otimes -2})$, the geometric vector bundle over $\cP_g^0$ associated to $\pi_*\cL_{\cP}^{\otimes -2}$. The inclusion $\cC(2,g,r)\subset \VV(\pi_*\cL_{\cP}^{\otimes -2})$ is an open immersion because of the deformation theory of $A_r$-prestable curves, which implies the smoothness of $\cC(2,g,r)$. 
 
 Given a twisted curve $\cZ$ over an algebraically closed field $k$, we can construct a family $\cZ_R\rightarrow \spec R$ of twisted curves where $R$ is a DVR such that the special fiber is $\cZ$ and the generic fiber is smooth. The smoothness of the morphism $\VV(\pi_{*}\cL^{\otimes -2})\rightarrow \Mtilde_0^{\rm tw}$ implies that given any object $(\cZ,\cL,i) \in \cC(2,g,r)'(k)$, we can lift it to $(\cZ_R,\cL_R,i_R) \in \cC(2,g,r)'(R)$ where $R$ is a DVR such that it restricts to $(\cZ,\cL,i)$ in the special fiber and such that the generic fiber is isomorphic $(\PP^1, \cO(-g-1), i)$ with $i^{\vee} \in \H^0(\PP^1,\cO(2g+2))$. Finally, the open substack of $\VV(\pi_*\cL_{\cP}^{\otimes -2})$ parametrizing sections $i^{\vee}$ without multiple roots is dense, therefore we get that we can deform every datum $(\cZ,\cL,i)$ to the datum of $(\PP^1, \cO(-g-1), i)$ where $i^{\vee}$ does not have multiple roots, which corresponds to a smooth hyperelliptic curve.
\end{proof}

\begin{remark}
	In the proof of \Cref{prop:smooth-hyp}, we have used the implication $(i) \implies (iii)$ as in  \Cref{lem:hyp-line-bun} for a twisted curve $\cZ$ of genus $0$. In fact, we can apply \Cref{lem:hyp-line-bun} to the coarse moduli space $Z$ of $\cZ$ and get the implication for $\cZ$ because the line bundle  $\cL^{\otimes -2}$ descends to $Z$ and the morphism $\cZ\rightarrow Z$ is cohomologically affine.
\end{remark}

\section{$\eta$ is a closed immersion}\label{sec:3}

We have introduced a morphism $\eta:\Htilde_g^r \arr \Mtilde_g^r$ between smooth algebraic stacks. We know that it is a closed immersion if $r\leq 1$. The rest of the paper is dedicated to proving the same result for any $r$. We are going to prove that $\eta$ is representable, formally unramified, injective on geometric points and universally closed which implies that $\eta$ is a closed immersion.

\begin{remark}
The morphism $\eta$ is faithful, as the automorphisms of $(C,\sigma)$ are by definition a subset of the ones of $C$ over any $\kappa$-scheme $S$. This implies that $\eta$ is representable.
\end{remark}

\subsection*{Injectivity on geometric points}
Firstly, we discuss why $\eta$ is injective on geometric points. We just need to prove that for every geometric point the morphism $\eta$ is full, i.e. every automorphism of a $A_r$-stable curve which is also hyperelliptic have to commute with the involution. This follows directly from the unicity of the hyperelliptic involution. Therefore our next goal is to prove that the hyperelliptic involution is unique over an algebraically closed field. We start by describing the possible quotients by an involution.

\begin{proposition}\label{prop:description-quotient}
	Let $k/\kappa$ be an algebraically closed field and $(C,\sigma)$ be a hyperelliptic $A_r$-stable curve of genus $g$. Denote by $Z$ the geometric quotient by the involution and suppose $Z$ is a reduced nodal curve of genus $0$ (with only separating nodes). Furthermore, let $c\in C$ be a closed point, $z \in Z$ be the image of $c$ in the quotient and $C_z$ be the schematic fiber of $z$, which is the spectrum of an artinian $k$-algebra. Then
	
	\begin{itemize}
		\item if $z$ is a smooth point, either 
		\begin{itemize}
			\item[(s1)] $C_z$ is disconnected and supported on two smooth points of $C$ (i.e. the quotient morphism is \'etale at $c$),
			\item[(s2)] or $C_z$ is connected and supported on a possibly singular point of $C$ (i.e. the quotient morphism is flat and ramified at $c$); 
		\end{itemize}
		\item if $z$ is a separating node, either
		\begin{itemize}
			\item[(n1)] $C_z$ is disconnected and supported on two nodes (i.e. the quotient morphism is \'etale at $c$),
			\item[(n2)] or $C_z$ is connected and supported on a tacnode (i.e. the quotient morphism is flat and ramified at $c$),
			\item[(n3)] or $C_z$ is connected and supported on a node (i.e. the quotient morphism is ramified at $c$ but not flat). 
		\end{itemize}
	\end{itemize} 

Finally, the quotient morphism is finite of generic degree $2$ and ${\rm (n3)}$ is the only case when the length of $C_z$ is not $2$ but $3$. 
\end{proposition}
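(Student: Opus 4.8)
The plan is to reduce the whole statement to the local analysis of \Cref{sec:1}, carried out at the point $c$.

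First I would record the two global facts. Since $C$ is proper and $\sigma$ has finite orbits, $\pi\colon C\to Z$ is finite; and since the fixed locus is finite, $\langle\sigma\rangle$ acts freely over the generic point of every irreducible component of $Z$, so $\pi$ is étale of degree $2$ there --- this is the ``finite of generic degree $2$'' clause. Over the (open, dense) locus where $\pi$ is flat the fibre length is then forced to be $2$, so the length-$3$ phenomenon can only occur where $\pi$ fails to be flat. It remains to run through the point $c$ case by case.

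\emph{Case 1: $c$ is not fixed.} Then $\sigma(c)\neq c$; as the fixed locus is closed, there is a $\sigma$-invariant open $U\ni c$ on which the action is free, so $U\to U/\sigma$ is a $\ZZ/2$-torsor (using that $2$ is invertible in $\kappa$), in particular étale of degree $2$. Hence $\widehat\cO_{C,c}\cong\widehat\cO_{Z,z}$ and $C_z=\{c,\sigma(c)\}$ is reduced of length $2$: if $z$ is smooth this is (s1), if $z$ is a node this is (n1).

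\emph{Case 2: $c$ is fixed.} Then $\sigma$ induces a nontrivial involution of $\widehat\cO_{C,c}$ --- a trivial action on the completed local ring would force $\sigma=\id$ on the component through $c$, contradicting finiteness of the fixed locus --- and, $c$ being the unique point of $C$ over $z$, one has $\widehat\cO_{Z,z}=\widehat\cO_{C,c}^{\,\sigma}$ (taking $\langle\sigma\rangle$-invariants commutes with localization and, since $2$ is invertible, with completion). Write $\widehat\cO_{C,c}\cong A_h$ with $0\le h\le r$. I now apply \Cref{prop:descr-inv} and the corollary following it; the hypothesis that $Z$ is nodal says $\widehat\cO_{Z,z}$ is $A_0$ or $A_1$, which prunes the list (and discards case $(c_3)$, whose fixed locus is a whole component, by finiteness) to the following:
\begin{itemize}
	\item $\widehat\cO_{Z,z}\cong A_0$: cases $(a)$ ($h$ even), $(b_1)$ and $(c_1)$; here one reads off from the corollary that $\widehat\cO_{C,c}$ is free of rank $2$ over $\widehat\cO_{Z,z}$, so $\pi$ is flat at $c$ and $C_z$ is connected of length $2$, supported at $c$, which is smooth for $h=0$ and an $A_h$-singularity otherwise --- case (s2);
	\item $\widehat\cO_{Z,z}\cong A_1$: the corollary forces either $h=3$ with $\sigma\colon x\mapsto-x,\ y\mapsto y$ (then $A_h^{\sigma}\cong A_1$, $\widehat\cO_{C,c}$ free of rank $2$, $\pi$ flat, $c$ a tacnode) --- case (n2) --- or $h=1$ with $\sigma\colon x\mapsto-x,\ y\mapsto-y$ (then $\pi$ is not flat, $c$ a node) --- case (n3); odd $h\ge 5$ is impossible, since it would make $\widehat\cO_{Z,z}$ an $A_{\ge 2}$-singularity.
\end{itemize}
In case (n3) the explicit model $A_1^{\sigma}=k[[x^2,xy]]\hookrightarrow A_1$ gives $C_z=\spec k[x,y]/(x^2,xy,y^2)$, of length $3$, connected, supported at the node $c$; this also settles the final sentence, since in every other case $C_z$ has length $2$.

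\emph{Expected main obstacle.} Nothing here is deep; the care needed is bookkeeping --- checking that taking $\langle\sigma\rangle$-invariants commutes with completion (so that \Cref{sec:1} applies verbatim), and using the nodality of $Z$ together with finiteness of the fixed locus to cut the classification down to exactly these five configurations, in particular ruling out odd $h\ge 5$ and case $(c_3)$ when $c$ is fixed. The flatness assertions and the two fibre-length computations are then immediate from the corollary.
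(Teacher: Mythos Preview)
Your proof is correct and follows exactly the same approach as the paper: split into the \'etale case ($c$ not fixed) and the fixed case, and in the latter pass to completions and invoke \Cref{prop:descr-inv} together with its corollary. The paper's own proof is a two-line sketch pointing to this local analysis; you have simply spelled out the case-by-case matching and the length-$3$ computation in (n3), which is helpful and accurate.
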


\begin{proof}
	As we are dealing with the quotient by an involution, the quotient morphism is either \'etale at closed point $c\in C$ or the point $c\in C$ is fixed by the involution. If the point is in the fixed locus, we can pass to the completion and apply  \Cref{prop:descr-inv}. 
\end{proof}
\begin{remark}
	We claim that if $C$ is an $A_r$-prestable curve of genus $g$ and $\sigma$ is any involution, then the geometric quotient $Z$ is automatically an $A_r$-prestable curve. The quotient is still reduced because $C$ is reduced. We have connectedness because of the surjectivity of the quotient morphism. The description of the quotient singularities in the first section implies the claim. Therefore if the quotient $Z$ has genus $0$, then it is a nodal curve 
	(with only separating nodes).
\end{remark}
\begin{remark}
      If $z$ is a node, we can say more about $C_z$. In fact, it is easy to prove that $C_z$ is a separating closed subset, i.e. the partial normalization of $C$ along the support of $C_z$ is not connected. This follows from the fact that every node in $Z$ is separating.
\end{remark}
We start by proving the uniqueness of the hyperelliptic involution for the case of an integral curve. For the remaining part of the section, $k$ is an algebraically closed field over $\kappa$. 

\begin{proposition}\label{prop:integral}
	Let $C$ be a $A_r$-stable integral curve of genus $g\geq 2$ over $k$ and suppose are given two hyperelliptic involution $\sigma_1,\sigma_2$ of $C$. Then $\sigma_1=\sigma_2$. 
\end{proposition}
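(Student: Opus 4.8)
The plan is to exploit the uniqueness criterion furnished by \Cref{lem:unique-inv-quotient}: it suffices to produce an isomorphism between the two geometric quotients $Z_1 = C/\sigma_1$ and $Z_2 = C/\sigma_2$ compatible with the quotient maps. Since $C$ is integral and $A_r$-stable of genus $g\geq 2$, each quotient $Z_i$ is an integral nodal curve of genus $0$, hence $Z_i \simeq \PP^1$, and the quotient map $\pi_i\colon C\to \PP^1$ is a finite morphism of generic degree $2$. Thus $\pi_{i*}\cO_C$ is a rank $2$ sheaf on $\PP^1$ splitting as $\cO_{\PP^1}\oplus \cL_i^{-1}$ with $\cL_i$ a line bundle of negative degree (indeed $\chi(\cL_i^{-1}) = \chi(\cO_C) - \chi(\cO_{\PP^1}) = -g$, so $\deg \cL_i = g+1$), and the branch divisor $B_i = \VV(i_i^\vee)\subset \PP^1$ sits inside $|\cL_i^{\otimes 2}| = |\cO_{\PP^1}(2g+2)|$. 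The key point to establish is that the linear system $|\cL_i|$ on $C$, or equivalently the map $\pi_i$ itself, is \emph{canonically determined by the curve $C$}, independently of $i$; once this is known, the identity on $C$ descends to an isomorphism $\psi\colon Z_1\to Z_2$ commuting with the $\pi_i$, and \Cref{lem:unique-inv-quotient} finishes the argument.

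To pin down $\pi_i$ intrinsically, I would run the standard dichotomy based on whether $C$ is $2$-gonal in a way visible from its dualizing sheaf. First, pull back $\omega_{\PP^1}\simeq \cO(-2)$: Noether's formula (as recalled in \Cref{rem:genus-count}, applied along the ramification/branch locus, i.e. $\pi_i^*\omega_{Z_i}(\text{ramification}) \simeq \omega_C$) expresses $\omega_C$ in terms of $\pi_i^*\cO_{\PP^1}(1) = \cL_i$: one gets $\omega_C \simeq \pi_i^*\cO_{\PP^1}(g-1)$, so that the image of the canonical map of $C$ (when $C$ is not itself rational, which is forced by $g\geq 2$) is the rational normal curve of degree $g-1$ in $\PP^{g-1}$, and the canonical map factors as $C \xrightarrow{\pi_i} \PP^1 \xrightarrow{\nu_{g-1}} \PP^{g-1}$. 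Hence $\pi_1$ and $\pi_2$ have the same fibers: the fibers of $\pi_i$ are exactly the preimages under the canonical morphism $\varphi_{\omega_C}$ of the points of the rational normal curve. This shows $\pi_1, \pi_2$ induce the same equivalence relation on geometric points, giving the desired $\psi$. For the low genus subtlety ($g = 2$, where $\omega_C$ itself has degree $2 = \deg \cL_i$ and $\varphi_{\omega_C} = \pi_i$ directly onto $\PP^1$), the argument is even more immediate since $\pi_i$ \emph{is} the canonical map up to the identification $Z_i\simeq\PP^1$; and one must separately note $C$ cannot be simultaneously, say, trigonal in a conflicting way, but for $g=2$ every curve is hyperelliptic and the canonical map has degree $2$, so there is nothing to check.

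The main obstacle I anticipate is the handling of the non-flat, non-\'etale ramification points of $\pi_i$, i.e. the points where $C$ has an odd $A_{2k+1}$-singularity of type $(b_3)$ or $(c_2)$ in the classification of \Cref{prop:descr-inv} — there the fiber $C_z$ has length $3$, not $2$, so $\pi_{i*}\cO_C$ is not locally free of rank $2$ there and the clean "degree $2$ cover" picture degrades. I would deal with this by first passing to the partial normalization $b\colon \widetilde C\to C$ at those special singular points (using $b^*\omega_C \simeq \omega_{\widetilde C}(J_b^\vee)$ from \Cref{rem:genus-count}), checking that each $\sigma_i$ lifts to $\widetilde C$ by the universal property of normalization, so that the hyperelliptic structures lift compatibly, and then arguing that these exceptional points are themselves intrinsic to $C$ (they are precisely the odd-type $A_{2k+1}$-singularities, and—crucially—for an integral curve the fixed locus of $\sigma_i$ is finite, which by \Cref{prop:descr-inv}(b), (c) constrains how $\sigma_i$ can act locally). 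Thus the set of such points is the same for $\sigma_1$ and $\sigma_2$, and on the complement $\pi_i$ is honestly finite flat of degree $2$ and the canonical-map argument applies verbatim; by density/continuity (both $\pi_i$ are determined on a dense open and are finite) the induced relations agree globally. A cleaner alternative, which I would try first, is to avoid $\pi_{i*}\cO_C$ altogether and argue purely on geometric points: for $p\in C$ a general (smooth, non-Weierstrass) point, $\sigma_i(p)$ is characterized as the unique point $q\neq p$ with $\varphi_{\omega_C}(p) = \varphi_{\omega_C}(q)$, and since $\varphi_{\omega_C}$ is intrinsic this forces $\sigma_1(p) = \sigma_2(p)$ on a dense open, whence $\sigma_1 = \sigma_2$ on the reduced integral curve $C$ by separatedness.
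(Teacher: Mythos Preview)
Your approach is essentially the paper's: both show that each quotient map $\pi_i$ is a factor of the canonical morphism via $\omega_C \simeq \pi_i^*\cO_{\PP^1}(g-1)$, whence $\sigma_1=\sigma_2$. The paper establishes this isomorphism by computing $h^0(C,\pi^*\cO(g-1))=g$ from $f_*\cO_C=\cO_{\PP^1}\oplus\cO_{\PP^1}(-g-1)$ and applying Riemann--Roch on the integral curve $C$ (a degree-zero line bundle with a section is trivial); your Riemann--Hurwitz route reaches the same conclusion, though the appeal to \Cref{rem:genus-count} is misplaced---that remark is about partial normalization, not finite covers, and what you actually need is relative duality for the flat double cover. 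One clarification: your ``main obstacle'' is empty here. Since $C$ is integral, $Z_i$ is integral of genus $0$, hence $Z_i\simeq\PP^1$ is \emph{smooth}; the local classification then forces the involution at every singular point of $C$ to be of type $(a)$, $(b_1)$, or $(c_1)$ (the only cases with $A_0$ quotient), all of which are flat, so $\pi_i$ is automatically finite flat of degree $2$ and no normalization workaround is needed.
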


\begin{proof}
First of all, notice that the quotient $Z:=C/\sigma$ is an integral curve with arithmetic genus $0$ over $k$, therefore $Z\simeq \PP^1$. Consider now the following morphism:
$$ 
\begin{tikzcd}
\phi: C \arrow[r, "f"] & \PP^1 \arrow[r, "i_{g-1}", hook] & \PP^{g-1}
\end{tikzcd}
$$ 
where $f$ is the quotient morphism and $i_{g-1}$ is the $(g-1)$-embedding. It is enough to prove that $\phi^*(\cO_{\PP^{g-1}}(1)) \simeq \omega_C$ as it implies that every hyperelliptic involution comes from the canonical morphism (therefore it is unique).

We denote by $\cL$ the line bundle $\phi^*\cO_{\PP^{g-1}}(1)$. Using Riemann-Roch for integral curves, we get that 
$$ h^0(C,\omega_C \otimes \cL^{-1})= h^0(C,\cL) + 1 -g $$ 
therefore if we prove that $h^0(C,\cL)= g$, we get that $\deg (\omega_C \otimes \cL^{-1})=0$ and that $h^0(\omega_C \otimes \cL^{-1})=1$ which implies $\cL \simeq \omega_C$ (as $C$ is integral). Because $f$ is finite, we have that $$\H^0(C,\cL)=\H^0(\PP^1,i_{g-1}^*\cO_{\PP^{g-1}}(1) \otimes f_*\cO_C) $$
thus using that $f_*\cO_C= \cO_{\PP^1}\oplus \cO_{\PP^1}(-g-1)$ as $f$ is a cyclic cover of degree $2$, we get that 
$$ \H^0(C,\cL)= \H^0(\PP^1, \cO_{\PP^1}(g-1))\oplus \H^0(\PP^1, \cO_{\PP^1}(-2))$$
which implies $h^0(C,\cL)=g$. 
\end{proof}

Now we deal with the genus $1$ case.
\begin{lemma}\label{lem:genus1}
	Let $(C,p_1,p_2)$ be a $2$-pointed $A_r$-stable curve of genus $1$. Then there exists a unique hyperelliptic involution which sends one point into the other. 
\end{lemma}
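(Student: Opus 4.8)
The plan is to classify $(C,p_1,p_2)$ and treat the integral and reducible cases separately, using the cyclic cover description of \Cref{sec:2} and the local analysis of \Cref{prop:descr-inv}. First, combining \Cref{rem:genus-count} with the requirement that $\omega_C(p_1+p_2)$ have positive degree on every irreducible component, one sees that there are only four possibilities: (i) $C$ is integral (a smooth genus $1$ curve, a nodal cubic, or a cuspidal cubic); (ii) $C=C_1\cup C_2$ with $C_1\simeq C_2\simeq\PP^1$ meeting at two nodes and $p_i\in C_i$; (iii) $C=C_1\cup C_2$ with $C_1\simeq C_2\simeq\PP^1$ meeting at a tacnode ($A_3$) and $p_i\in C_i$; (iv) $C=C_1\cup C_2$ with $C_1$ integral of genus $1$, $C_2\simeq\PP^1$ a tail meeting $C_1$ at a single node, and $p_1,p_2\in C_2$. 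In each reducible case the distribution of the two marked points is forced by stability.

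For case (i), since $C$ is integral and Gorenstein of genus $1$ we have $\omega_C\simeq\cO_C$, so $\cL:=\cO_C(p_1+p_2)$ has degree $2$; Riemann--Roch for integral Gorenstein curves gives $h^0(C,\cL)=2$ and $h^0(C,\cL(-q))=1$ for every closed point $q$, so $\cL$ is globally generated and its complete linear system defines a finite degree $2$ morphism $f\colon C\to\PP^1$, which is generically \'etale since $2$ is invertible in $\kappa$. As in \Cref{sec:2}, the $\ZZ/2$-grading of $f_*\cO_C$ produces an involution $\sigma$; since $H^0(C,\cL)=f^*H^0(\PP^1,\cO(1))$, the divisor $p_1+p_2$ is a fibre of $f$, hence $f(p_1)=f(p_2)$ and $\sigma(p_1)=p_2$, and $\sigma$ is a hyperelliptic involution with quotient $\PP^1$. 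Uniqueness is then the argument already used in \Cref{prop:integral}: a second hyperelliptic involution $\sigma'$ with $\sigma'(p_1)=p_2$ has integral genus $0$ quotient $Z'\simeq\PP^1$, the fibre of $f'\colon C\to Z'$ through $p_1$ equals $p_1+p_2$, whence $(f')^*\cO_{Z'}(1)\simeq\cL$; thus $f'$ is again the morphism attached to $|\cL|$, and \Cref{lem:unique-inv-quotient} gives $\sigma'=\sigma$.

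For the reducible cases I would first observe that an involution with $\sigma(p_1)=p_2$ is forced to interchange $C_1$ and $C_2$ in (ii) and (iii) (so it is determined by an isomorphism $C_1\xrightarrow{\ \sim\ }C_2$), and to preserve $C_1$, $C_2$ and fix the node $q=C_1\cap C_2$ in (iv). To construct $\sigma$ I use rigidity of $\PP^1$: on each $\PP^1$-component there is a unique automorphism pinned down by the images of the marked point(s) and of the attaching node(s), together with, at the tacnode in (iii), the requirement that the two branches be exchanged; on the integral genus $1$ component $C_1$ in (iv) I invoke case (i) to get the unique hyperelliptic involution of $C_1$ fixing $q$ (for smooth $C_1$ this is $x\mapsto 2q-x$, and it degenerates correctly for a nodal or cuspidal cubic). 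One checks these pieces glue to an honest involution of $C$ and then reads off its geometric quotient $Z$ via \Cref{prop:descr-inv}: at every node or tacnode where the branches are exchanged (cases $(c_3)$, respectively $(b_1)$) the singular point of $C$ maps to a smooth point of $Z$, so $Z$ is a reduced connected nodal curve, and a genus count via \Cref{rem:genus-count} shows $Z$ has genus $0$ --- it is $\PP^1$ in (ii) and (iii) and a union of two $\PP^1$'s meeting at one node in (iv). Hence $\sigma$ is a hyperelliptic involution.

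For uniqueness in the reducible cases, a second hyperelliptic involution $\sigma'$ with $\sigma'(p_1)=p_2$ has the same behaviour on components, and its restrictions must agree with those of $\sigma$ by rigidity of $\PP^1$ and, on $C_1$ in (iv), by case (i). The only remaining ambiguity is discrete --- the isomorphism $C_1\to C_2$ could interchange the two attaching nodes in (ii), or the local model at the tacnode in (iii) could be the branch-exchanging model $(b_3)$ of \Cref{prop:descr-inv} with cuspidal quotient --- and each such choice is excluded because it yields a quotient that is either a genus $1$ nodal curve or has a cusp, hence is not a hyperelliptic involution. The main obstacle is exactly this bookkeeping in the reducible cases: one must simultaneously assemble a globally well-defined involution out of component data, isolate which discrete choice at each node or tacnode yields a nodal genus $0$ quotient, and rule out the alternatives in the uniqueness step; the tacnode configuration is the most delicate point and is where the $r\ge 3$ part of \Cref{prop:descr-inv} is genuinely used.
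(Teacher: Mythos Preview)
Your proof follows the same four-case classification as the paper's own argument, and you supply the details that the paper leaves as ``easy to prove in all four situations''; the use of $|\cO_C(p_1+p_2)|$ in the integral case and the rigidity-of-$\PP^1$ bookkeeping in the reducible cases are exactly the natural ways to fill those gaps. One minor slip: the branch-exchanging involution at a node with smooth quotient is case $(c_1)$ of \Cref{prop:descr-inv}, not $(c_3)$ (which fixes both branches set-wise and has an entire component as fixed locus); this mislabel does not affect your argument, since your conclusion about the quotient being smooth at such a node is correct.
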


\begin{proof}
	The proof of this lemma consists in describing all the possible cases. First of all, the condition on the genus implies that the curve is $A_3$-stable, as the arithmetic genus would be too high with more complicated $A_r$-singularities. Clearly, 
	$$\sum_{\Gamma \subset C} g(\Gamma) \leq g(C)=1$$ 
	where $\Gamma$ varies in the set of irreducible components of $C$. Consider first the case where there exists $\Gamma$ such that $g(\Gamma)=1$, therefore all the other irreducible components have genus $0$ and all the separating points are nodes. Thanks to the stability condition, it is clear that $C$ is either integral (case (a) in \Cref{fig:Genus1}) with two smooth points or it has two irreducible components $\Gamma_1$ and $\Gamma_0$, where $g(\Gamma_1)=1$, $g(\Gamma_0)=0$, they intersect in a separating node and the two smooth points lie on $\Gamma_0$ (case (b) in \Cref{fig:Genus1}).
	Suppose then that for every irreducible component $\Gamma$ of $C$ we have $g(\Gamma)=0$. Therefore, if the curve is not $A_1$-prestable, we have that the only possibility is that there exists a separating tacnode between two genus $0$ curves. Stability condition implies that $C$ can be described as two integral curves of genus $0$ intersecting in a tacnode, and the two points lie in different components (case (d) in \Cref{fig:Genus1}). Finally, if $C$ is $1$-prestable we get that $C$ has two irreducible components of genus $0$ intersecting in two points and the smooth sections lie in different components (case (c) in \Cref{fig:Genus1}). 
	\begin{figure}[H]
		\caption{$2$-pointed genus $1$ curves}
		\centering
		\includegraphics[width=1\textwidth]{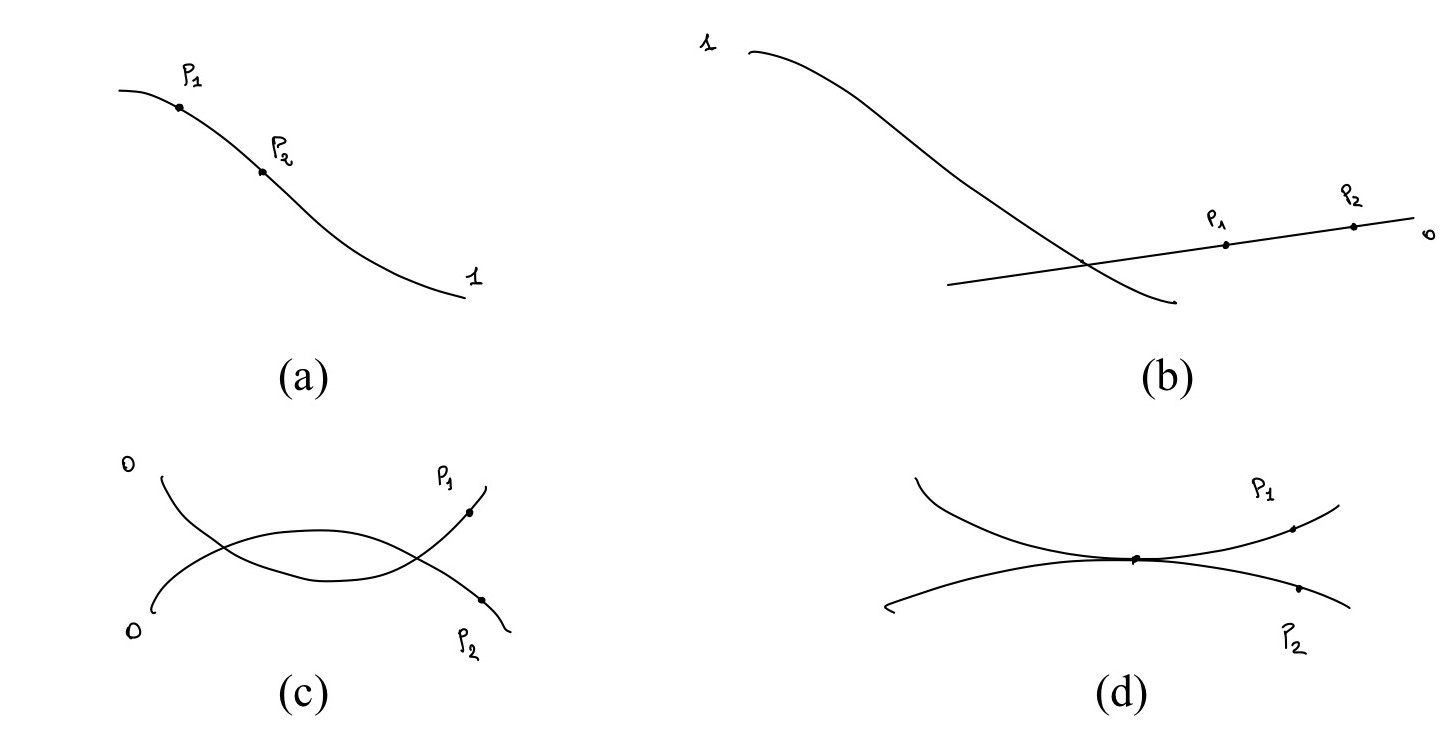}
		\label{fig:Genus1}
	\end{figure}
	In all of these four situations, it is easy to prove existence and uniqueness of the hyperelliptic involution. 
\end{proof}

\begin{remark}
In the previous lemma, we can also consider the case when  $p:=p_1=p_2$ and $(C,p)$ is an $1$-pointed $A_2$-stable curve. The same result is true.
\end{remark}
We want to treat the case of reducible curves.

\begin{definition}\label{def:subcurve}
	 Let $(C,\sigma)$ a hyperelliptic $A_r$-stable curve over an algebraically closed field. A one-equidimensional reduced closed subscheme $\Gamma \subset C$ is called a \emph{subcurve} of $C$. We denote by $\iota_{\Gamma}:\Gamma \rightarrow C$ the closed immersion. If $\Gamma$ is a subcurve of $C$, we denote by $C-\Gamma$ the complementary subcurve of $\Gamma$, i.e. the closure of $C\setminus \Gamma$ in $C$. 
\end{definition}

\begin{remark}
	A subcurve is just a union of irreducible components of $C$ with the reduced scheme structure. Furthermore, given an one-equidimensional closed subset of $C$, we can always consider the associated subcurve with the reduced scheme structure.
\end{remark}

\begin{lemma}\label{lem:subcurve}
	 Let $(C,\sigma)$ be a hyperelliptic $A_r$-stable curve of genus $g\geq 2$ over an algebraically closed field and $\Gamma \subset C$ be a subcurve such that $g(\Gamma)\geq 1$. Then $\dim(\Gamma \cap \sigma(\Gamma))=1$.  
\end{lemma}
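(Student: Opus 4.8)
The plan is to prove that $\Gamma$ and $\sigma(\Gamma)$ must have a common irreducible component; since $C$ is one-dimensional this is exactly the assertion $\dim(\Gamma\cap\sigma(\Gamma))=1$. So assume for contradiction that $\Gamma$ and $\sigma(\Gamma)$ share no component. Let $f\colon C\to Z$ be the quotient morphism, so $Z$ is a connected nodal curve of genus $0$, hence a tree of $\PP^1$'s. Put $\bar\Gamma:=f(\Gamma)$ with its reduced structure: by finiteness of $f$ it is a union of irreducible components of $Z$, so every connected component of $\bar\Gamma$ is again a tree of $\PP^1$'s. Consequently $\chi(\cO_{\bar\Gamma})$ equals the number of connected components of $\bar\Gamma$; in particular $\chi(\cO_{\bar\Gamma})\geq 1$.

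The key step is to check that $f\rest{\Gamma}\colon\Gamma\to\bar\Gamma$ is finite and birational. Finiteness is inherited from $f$. First observe that no component $\Gamma_0$ of $\Gamma$ is $\sigma$-stable: otherwise $\Gamma_0=\sigma(\Gamma_0)$ would be a common component of $\Gamma$ and $\sigma(\Gamma)$. Hence, for a component $\Delta$ of $Z$ contained in $\bar\Gamma$, say $\Delta=f(\Gamma_0)$ with $\Gamma_0\subset\Gamma$, the morphism $f$ is \'etale over the generic point $\bar\eta$ of $\Delta$ with fibre the two generic points $\eta_0=\eta(\Gamma_0)$ and $\sigma(\eta_0)=\eta(\sigma(\Gamma_0))$ (cf. \Cref{prop:descr-inv} and \Cref{prop:description-quotient}). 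Since $\sigma(\Gamma_0)$ is not a component of $\Gamma$ — again by the shared-component assumption — we get $f^{-1}(\bar\eta)\cap\Gamma=\{\eta_0\}$, so $\Gamma_0\to\Delta$ has degree one and distinct components of $\Gamma$ map to distinct components of $Z$. Therefore $f\rest{\Gamma}$ is an isomorphism over a dense open subset of $\bar\Gamma$.

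Now consider the exact sequence of coherent sheaves on $\bar\Gamma$
$$ 0 \larr \cO_{\bar\Gamma} \larr (f\rest{\Gamma})_*\cO_\Gamma \larr \cQ \larr 0, $$
where the first map is injective because $\bar\Gamma$ is reduced and $f\rest{\Gamma}$ is dominant, and $\cQ$ is a sheaf of finite length supported on the finite locus where $f\rest{\Gamma}$ is not an isomorphism. Taking Euler characteristics and using that $f\rest{\Gamma}$ is finite, we obtain $\chi(\cO_\Gamma)=\chi(\cO_{\bar\Gamma})+\operatorname{length}(\cQ)\geq\chi(\cO_{\bar\Gamma})\geq 1$. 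On the other hand $g(\Gamma)\geq 1$ forces $\chi(\cO_\Gamma)=1-g(\Gamma)\leq 0$, a contradiction (the argument is insensitive to whether $\Gamma$ is connected, reading $g(\Gamma)=1-\chi(\cO_\Gamma)$). This proves the lemma.

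I expect the delicate point to be the birationality of $f\rest{\Gamma}$: one must exclude, using only that $\Gamma$ and $\sigma(\Gamma)$ share no component, both the possibility that some component of $\Gamma$ is $\sigma$-stable and the possibility that two components of $\Gamma$ get identified in $Z$, while still allowing $\Gamma$ and $\sigma(\Gamma)$ to meet in finitely many points and allowing $\Gamma$ to be disconnected (so that $\bar\Gamma$ may have strictly fewer connected components than $\Gamma$). The remaining steps are routine Euler-characteristic bookkeeping, using only that the quotient $Z$ has genus $0$.
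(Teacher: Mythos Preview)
Your proof is correct and follows essentially the same approach as the paper: assume $\Gamma$ and $\sigma(\Gamma)$ share no component, show that $f\rest{\Gamma}\colon \Gamma\to f(\Gamma)$ is finite birational, and derive a genus contradiction from the fact that $f(\Gamma)$ sits inside a genus-$0$ curve. The paper phrases the last step as the chain of inequalities $g(Z)\geq g(f(\Gamma))\geq g(\Gamma)\geq 1$, whereas you unpack it via the exact sequence and Euler characteristics; your version is slightly more careful about the possibility that $\Gamma$ or $\bar\Gamma$ is disconnected, but the content is the same.
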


\begin{proof}
Suppose $\dim(\Gamma \cap \sigma(\Gamma))=0$ and consider the quotient morphism $f: C \arr Z:=C/\sigma$. Consider now the schematic image $f(\Gamma) \subset Z$, which is a subcurve of $Z$ as it is reduced and $f$ is finite. If we restrict $f$ to $f(\Gamma)$, we get the following commutative diagram:
$$
\begin{tikzcd}
\Gamma \arrow[r, hook] \arrow[rd] & f^{-1}f(\Gamma) \arrow[r, hook] \arrow[d, "f_{\Gamma}"] & C \arrow[d, "f"] \\
& f(\Gamma) \arrow[r, hook]                               & Z               
\end{tikzcd}
$$
where the square diagram is cartesian. If we consider $U:=f(\Gamma) \setminus f(\Gamma \cap \sigma(\Gamma))$, we get that $f^{-1}(U)$ is a disjoint union of two open subsets and $\sigma$ maps one into the other,  therefore the action on $f^{-1}(U)$ is free, giving that the degree of $f_{\Gamma}$ restricted to $f^{-1}(U)$ is $2$. The condition $\dim(\Gamma \cap \sigma(\Gamma))=0$ assures us that $f^{-1}(U)=(\Gamma \cup \sigma(\Gamma))\setminus (\Gamma \cap \sigma(\Gamma))$ is in fact dense in $f^{-1}f(\Gamma)$. Thus the morphism $f\vert_{\Gamma}:\Gamma \arr \pi(\Gamma)$ is a finite morphism which is in fact an isomorphism over $\Gamma\setminus \sigma(\Gamma)$, which is a dense open, therefore it is birational. This implies the following inequality
$$g(Z)\geq g(f(\Gamma))\geq g(\Gamma)\geq 1$$
which is absurd because $g(Z)=0$. 
\end{proof}

\begin{remark}
	Notice that this lemma implies that the only irreducible components that are not fixed by the hyperelliptic involution have (arithmetic) genus equal to $0$. Therefore, if we have a hyperelliptic involution $\sigma$ of $C$ and $\Gamma$ is an irreducible component of positive genus, we get that $\sigma(\Gamma)=\Gamma$ and $\sigma\vert_{\Gamma}$ is a hyperelliptic involution of $\Gamma$. This is true because we are quotienting by a linearly reductive group, thus the  morphism 
	$$\Gamma/\sigma \rightarrow C/\sigma$$
	induced by the closed immersion $\Gamma \subset C$ is still a closed immersion.
\end{remark}	

Let $C$ be an $A_r$-stable curve of genus $g\geq 2$. We denote by ${\rm Irr}(C)$ the set whose elements are the irreducible components of $C$. Then every automorphism $\phi$ of $C$ induces a permutation $\tau_{\phi}$ of the set ${\rm Irr}(C)$. First of all, we need to prove that the action on ${\rm Irr}(C)$ is the same for every hyperelliptic involution. Then we prove that the action on the $0$-dimensional locus described by all the intersections between the irreducible components is the same for all hyperelliptic involutions. Finally we see how these two facts imply the uniqueness of the hyperelliptic involution. We denote by $l(Q)$ the length of a $0$-dimensional subscheme $Q\subset C$.

\begin{lemma}\label{lem:exist-decomposition}
	Let $(C,\sigma)$ be a hyperelliptic $A_r$-stable curve of genus $g$ over $k$ and suppose there exists two irreducible components $\Gamma_1$ and $\Gamma_2$  of genus $0$ of $C$ such that $\sigma$ send $\Gamma_1$ in $\Gamma_2$. Let $n$ be the length of $\Gamma_1\cap \Gamma_2$. Then there exists a nonnegative integer $m$ and $m$ disjoint subcurves $D_i \subset C$ such that the following properties holds:
	\begin{itemize}
		\item[a)] $m+n\geq 3$,
		\item[b)] $\sigma(D_i)=D_i$ for every $i=1,\dots,m$,
		\item[c)] the length of the subscheme $D_i\cap \Gamma_j$ is $1$ for every $i=1,\dots,m$ and $j=1,2$ and $D_i \cap \Gamma_1 \cap \Gamma_2=\emptyset$,
		\item[d)] if we denote by $P_j^i$ the intersection $D_i \cap \Gamma_j$, we have that $(D_i,P_1^i,P_2^i)$ is a $2$-pointed $A_r$-stable curve of genus $g_i>0$ such that $\sigma\vert_{D_i}$ is a hyperelliptic involution of $D_i$ which maps $P_1^i$ to $P_2^i$,
		\item[e)] $C=\Gamma_1\cup \Gamma_2 \cup \bigcup_{i=1}^m D_i$.
	\end{itemize}
Furthermore, the following equality holds $$g=m+n-1+\sum_{i=0}^m g_i. $$
\end{lemma}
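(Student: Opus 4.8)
The plan is to read the decomposition off the geometric quotient $f\colon C\arr Z$. First I would set $W:=f(\Gamma_1)=f(\Gamma_2)$; since $\Gamma_1,\Gamma_2$ have genus $0$ they are copies of $\PP^1$, and since $\sigma$ does not fix $\Gamma_1$ the maps $f|_{\Gamma_j}$ are birational onto $W$, so (as $Z$ is nodal of genus $0$) $W\cong\PP^1$ and each $f|_{\Gamma_j}$ is an isomorphism onto $W$. Because $f$ has degree $2$, no other component of $C$ maps onto $W$, hence $\overline{f^{-1}(W)}=\Gamma_1\cup\Gamma_2$ and every remaining component of $C$ maps into $\overline{Z\setminus W}$. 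As $Z$ has genus $0$ its dual graph is a tree, so $\overline{Z\setminus W}$ is a disjoint union of connected genus-$0$ nodal curves $T_1,\dots,T_m$, each meeting $W$ at a single node $w_i$, with the $w_i$ pairwise distinct; I would then \emph{define} $D_i:=f^{-1}(T_i)$ with its reduced structure. These are $\sigma$-invariant (preimages of subschemes of $Z$) and pairwise disjoint (the $T_i$ are), and together with $\Gamma_1\cup\Gamma_2$ they exhaust $C$. This already gives (b) and (e) and fixes the value of $m$.

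The technical heart is the analysis of $f^{-1}(w_i)$, which by \Cref{prop:description-quotient} is either two nodes of $C$ exchanged by $\sigma$, or a single $\sigma$-fixed tacnode, or a single $\sigma$-fixed node. I would rule out the last two: in each case $f^{-1}(w_i)=\{p\}$ with $\widehat{\cO}_{C,p}$ an $A_1$ or $A_3$ whose quotient is the node $\widehat{\cO}_{Z,w_i}\simeq A_1$; but by the classification of involutions of $A_r$ and their quotients in \Cref{prop:descr-inv}, the only involutions of $A_1$ or $A_3$ with \emph{nodal} quotient and \emph{finite} fixed locus are those fixing each of the two local branches at $p$ (the branch-swapping involutions would give either a $1$-dimensional fixed locus, cf.\ \Cref{rem:fix-locus}, or a non-nodal quotient, contradicting that $Z$ is nodal and $\sigma$ has finite fixed locus). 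Since $\sigma$ exchanges $\Gamma_1$ and $\Gamma_2$, a $\sigma$-fixed branch of $C$ at $p$ cannot lie on $\Gamma_1$ nor on $\Gamma_2$; hence both branches at $p$ lie on components of $D_i$, so $p\notin\Gamma_1\cup\Gamma_2$, $D_i$ meets $\Gamma_1\cup\Gamma_2$ nowhere, and $D_i$ is a connected component of $C$ by itself, contradicting connectedness. Therefore $f^{-1}(w_i)=\{P_1^i,\sigma(P_1^i)\}$ is two nodes, $f$ is \'etale over $w_i$, the branch of $P_1^i$ mapping into $W$ lies on one of $\Gamma_1,\Gamma_2$ — say $\Gamma_1$ — and the other branch lies on $D_i$; so $P_1^i\in\Gamma_1\cap D_i$ is a node, $D_i\cap\Gamma_1=\{P_1^i\}$ has length $1$, and applying $\sigma$, $P_2^i:=\sigma(P_1^i)\in\Gamma_2\cap D_i$ with $D_i\cap\Gamma_2=\{P_2^i\}$ of length $1$; since $\Gamma_2$ is not a component of $D_i$, $P_1^i\notin\Gamma_2$, so $D_i\cap\Gamma_1\cap\Gamma_2=\emptyset$. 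This is (c).

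For (d): $\sigma$ restricts to an involution of $D_i$ with geometric quotient $f(D_i)=T_i$ (connected, nodal, of genus $0$) and fixed locus contained in that of $\sigma$, hence finite, so $\sigma|_{D_i}$ is a hyperelliptic involution once $D_i$ is connected — and $D_i$ must be connected, for otherwise it would split into two pieces exchanged by $\sigma$, each isomorphic to $T_i$ and attached to the rest of $C$ at the single point $P_1^i$ (resp.\ $P_2^i$), violating ampleness of $\omega_C$. The points $P_1^i,P_2^i$ are smooth points of $D_i$ (each is one branch of a node) and are swapped by $\sigma$; by adjunction $\omega_{D_i}(P_1^i+P_2^i)\simeq\omega_C|_{D_i}$, which is ample since $\omega_C$ is, so $(D_i,P_1^i,P_2^i)$ is a $2$-pointed $A_r$-stable curve. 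Its genus $g_i$ satisfies $g_i\ge 0$ because $D_i$ is connected, and $g_i=0$ is impossible because then $\omega_C|_{D_i}$ would have degree $0$; hence $g_i>0$.

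It remains to do the numerics. For (a): by the adjunction and genus formulas of \Cref{rem:genus-count} (cf.\ Lemma~1.12 of \cite{Cat}), $\deg(\omega_C|_{\Gamma_1})=-2+n+m$, the $n$ being the length of $\Gamma_1\cap\Gamma_2$ and the $m$ the contribution of the $m$ nodes $P_1^i$; ampleness of $\omega_C$ forces $m+n\ge 3$. For the genus identity I would apply the Mayer--Vietoris sequence to $C=(\Gamma_1\cup\Gamma_2)\cup\bigcup_i D_i$, whose pairwise intersection is the length-$2m$ scheme $\bigsqcup_i\{P_1^i,P_2^i\}$: this gives $\chi(\cO_C)=(2-n)+\sum_{i=1}^m(1-g_i)-2m$, hence $g=1-\chi(\cO_C)=m+n-1+\sum_{i=1}^m g_i$. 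I expect the only genuine obstacle to be the local case analysis at the $w_i$ in the second paragraph; that is the one place where the hypotheses "$Z$ nodal", "$\sigma$ exchanges $\Gamma_1$ and $\Gamma_2$", and "$C$ connected" must be combined, and everything else is bookkeeping.
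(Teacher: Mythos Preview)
Your proof is correct and follows essentially the same construction as the paper: both define the $D_i$ as the (reduced) preimages under the quotient map $f\colon C\to Z$ of the connected components of $\overline{Z\setminus f(\Gamma_1\cup\Gamma_2)}$, and both deduce (b), (e), the connectedness in (d), and the numerics in the same way.

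The one genuine difference is in how you establish that $f^{-1}(w_i)$ falls in case (n1) of \Cref{prop:description-quotient}. The paper argues more directly: if the fiber were a single point $p$, then since $f|_{\Gamma_j}\colon\Gamma_j\to W$ is an isomorphism, $p$ lies on both $\Gamma_1$ and $\Gamma_2$; on the other hand one local branch at $p$ maps into $T_i$, so $p$ also lies on a component of $D_i$, whence $p\in\Gamma_1\cap\Gamma_2\cap D_i$ and $\cO_{C,p}$ has at least three minimal primes, impossible for an $A_r$-singularity. Your route instead classifies which involutions of $A_1$ and $A_3$ have nodal quotient and finite fixed locus, observes that they all fix each local branch, and concludes that no branch at $p$ can lie on $\Gamma_1$ or $\Gamma_2$. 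This is valid (indeed the contradiction is already reached once you note $p\notin\Gamma_1$ while $f|_{\Gamma_1}$ is onto $W$), but it is a longer path to the same endpoint; the paper's ``three minimal primes'' observation avoids the local case analysis entirely and is worth knowing.
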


Before proving the lemma, let us explain it more concretely. First of all, the intersections in the lemma are scheme-theoretic, therefore we can have non-reduced ones. More precisely, the intersections have to be supported in singular points of type $A_{2h-1}$, as the local ring is not integral. It is easy to prove that the scheme-theoretic intersection in such a point is in fact of length $h$. Notice that both $m$ or $n$ can be zero.

\begin{figure}[H]
	\caption{Decomposition as in \Cref{lem:exist-decomposition}}
	\centering
	\includegraphics[width=1\textwidth]{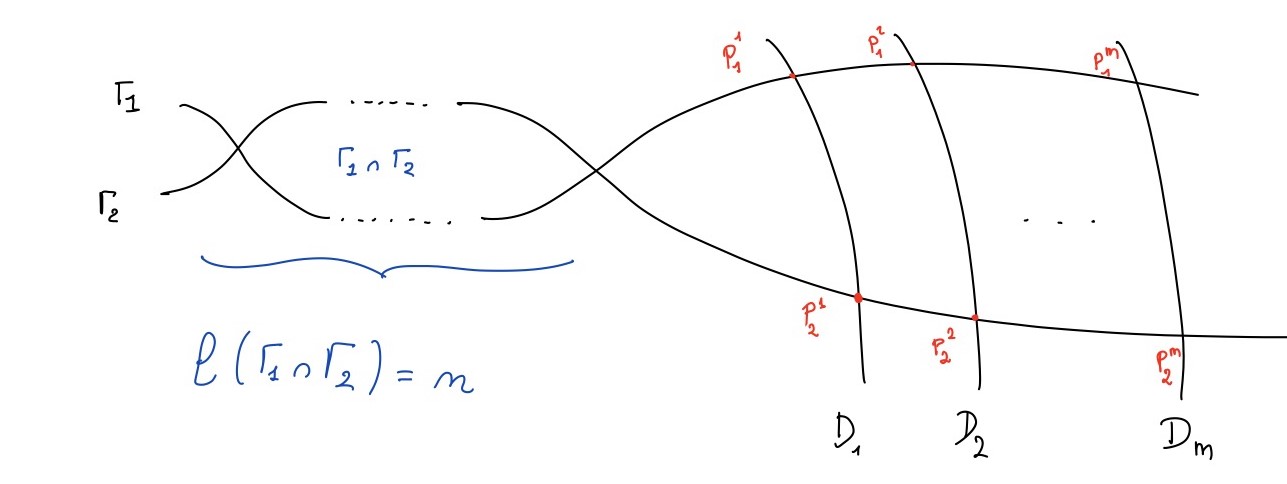}
	\label{fig:Decomp}
\end{figure}
	
\begin{proof}
	  Let $f:C\rightarrow Z$ be the quotient morphism and $\Gamma$ the image of $\Gamma_1\cup\Gamma_2$ through $\pi$. Because every node in $Z$ is a separating node, we know that $Z-\Gamma = \bigsqcup_{i=1}^m E_i$, i.e. it is a disjoint union (possibly empty) of $m$ subcurves of $Z$, which are still reduced, connected and of genus $0$.  Let $D_i$ be the subcurve of $C$ associated to the closed subset $\pi^{-1}(E_i)$ of $C$. We prove that $D_1,\dots,D_m$ verify the properties listed in the proposition.
	
	 Clearly, $D_i\cap D_j=\emptyset$ for every $i\neq j$ by construction. Properties b) and e) are verified by construction as well. Notice that $\Gamma_1\cap\Gamma_2\cap D_i=\emptyset$ for every $i=1,\dots,m$, otherwise if $p\in \Gamma_1\cap\Gamma_2\cap D_i$ is a closed point, then the local ring $\cO_{C,p}$ would have $3$ minimal primes. This cannot occur as the only singularities allowed are of type $A_r$, which have at most $2$ minimal primes. Therefore if we define $Q_i:=E_i\cap \Gamma$, we have that $\pi^{-1}(Q_i)$ is disconnected (as it does not belong to $\pi(\Gamma_1\cap\Gamma_2)$) and thus we are in the situation (n1) of \Cref{prop:description-quotient}. Property c) follows easily from this. Because $C$ is $A_r$-stable, we also get property a). 
	 
	 Regarding property d), we know that $D_i$ is reduced and that $D_i\cap (C-D_i)$ has length $2$, hence it is enough to prove $D_i$ is connected and then the statement follows. However, suppose $D_i$ is not connected, namely  $D_i=D_i^1 \bigsqcup D_i^2$ with $D_i^j$ two subcurves of $C$ for $j=1,2$ such that $\sigma(D_i^1)=D_i^2$. Using \Cref{lem:subcurve} again, we get $g(D_i^j)=0$ for $j=1,2$. This is not possible because of the stability condition on $C$. The genus formula follows from a straightforward computation using \Cref{rem:genus-count}.
\end{proof}	

Now that we have described the geometric structure of $C$, we can use it to prove that any other hyperelliptic involution has to act in the same way over the set ${\rm Irr}(C)$ of the irreducible components. 
\begin{lemma}
	Let $C/k$ be a $A_r$-stable curve of genus $g$ and suppose we have a decomposition as in \Cref{lem:exist-decomposition}. Thus every hyperelliptic involution $\sigma$ of $C$ commutes with the decomposition, i.e. we have $\sigma(\Gamma_1)=\Gamma_2$.
\end{lemma}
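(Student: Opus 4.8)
The plan is to fix an arbitrary hyperelliptic involution $\sigma$ of $C$ and show $\sigma(\Gamma_1)=\Gamma_2$, by ruling out every other possibility for the genus-$0$ component $\sigma(\Gamma_1)$.

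\textit{Step 1: reduction to the action on components.} By the remark following \Cref{lem:subcurve}, every irreducible component of $C$ of positive arithmetic genus is preserved by $\sigma$; hence $\sigma$ permutes the set of genus-$0$ irreducible components, and since $\mathrm{Fix}(\sigma)$ is finite, $\sigma$ cannot restrict to the identity on any component (a fixed component would be a one-dimensional part of $\mathrm{Fix}(\sigma)$). As $C=\Gamma_1\cup\Gamma_2\cup\bigcup_i D_i$ by property (e) of \Cref{lem:exist-decomposition}, and since $\Gamma_1\cap D_i$ has length $1$ (so $\Gamma_1,\Gamma_2$ are not components of any $D_i$), the component $\sigma(\Gamma_1)$ is: (I) $\Gamma_1$ itself; (II) $\Gamma_2$; or (III) a genus-$0$ irreducible component contained in some $D_{i_0}$. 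The goal is to exclude (I) and (III). In what follows I use freely that $\sigma$, being an automorphism of $C$, sends connected subcurves to connected subcurves of the same arithmetic genus, that each $D_i$ is connected of genus $g_i>0$ meeting $C-D_i$ in a length-$2$ subscheme supported on $\{P_1^i,P_2^i\}$, and that $m+n\ge 3$ while $g=m+n-1+\sum_i g_i$ (so each $g_{i_0}\le g-2<g$).

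\textit{Step 2: $\sigma$ cannot send $\Gamma_1$ into a $D_{i_0}$.} In case (III) we have $\Gamma_1=\sigma(\sigma(\Gamma_1))\subseteq\sigma(D_{i_0})$, so $D':=\sigma(D_{i_0})$ is a connected subcurve of genus $g_{i_0}>0$ containing $\Gamma_1$ and meeting its complementary subcurve $C-D'$ in a subscheme of length $2$ (supported on at most two points). Since $g_{i_0}<g$, $C-D'$ is nonempty. Now $\Gamma_1$ is attached to the rest of $C$ through the points of $\mathrm{supp}(\Gamma_1\cap\Gamma_2)$ and the nodes $P_1^1,\dots,P_1^m$, and at each such point at most one of the two branches can lie in $C-D'$; since $C-D'$ meets $D'$ in only two points, all but at most two of these attaching points are ``internal'' to $D'$, forcing $D'$ to contain $\Gamma_2$ (when $n\ge 1$) and components of the $D_i$ reachable from $\Gamma_1$, and then to propagate outward through those $D_i$. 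Running this propagation and using the genus additivity of \Cref{rem:genus-count} together with $m+n\ge 3$, one finds that $D'$ is forced to contain a loop or a positive-genus piece making $g(D')>g_{i_0}$, a contradiction. This excludes (III).

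\textit{Step 3: $\sigma$ cannot fix $\Gamma_1$.} Suppose (I) holds. Then $\sigma|_{\Gamma_1}$ is a non-trivial involution of $\Gamma_1\cong\PP^1$, with exactly two fixed points, permuting the finite set of distinct points where $\Gamma_1$ meets $C-\Gamma_1$, namely $\{a_1,\dots,a_{n'}\}=\mathrm{supp}(\Gamma_1\cap\Gamma_2)$ and $P_1^1,\dots,P_1^m$. Whenever $\sigma$ swaps two such points it carries the branch of $C-\Gamma_1$ at one onto the branch at the other, and tracing this through the connected pieces $D_i$ (as in Step 2) shows $\sigma$ induces a permutation of $\{D_1,\dots,D_m\}$ compatible with its action on $\{P_1^1,\dots,P_1^m\}$ and must also fix $\Gamma_2$ and permute the $a_j$. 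One then analyses the quotient $C/\sigma$: above each $\sigma$-fixed node or tacnode the local quotient is described by \Cref{prop:descr-inv} and \Cref{prop:description-quotient}, and on each $\sigma$-fixed positive-genus piece the induced involution is the (unique, by \Cref{prop:integral} and \Cref{lem:genus1}) hyperelliptic involution; combining these with the constraint $m+n\ge3$ on the branch points lying on $\Gamma_1$ forces $C/\sigma$ to acquire a non-nodal singularity (e.g.\ a cusp coming from a fixed tacnode) or to have positive genus, contradicting hyperellipticity of $\sigma$. This excludes (I), leaving $\sigma(\Gamma_1)=\Gamma_2$.

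The main obstacle is the bookkeeping in Steps 2 and 3: propagating ``$\sigma$ preserves or permutes these components'' outward from the two distinguished components $\Gamma_1,\Gamma_2$ through the $D_i$ while correctly handling the non-reduced intersections (the $A_{2h-1}$-points, in particular tacnodes) between components. The two levers that force the conclusion in each configuration are the finiteness of $\mathrm{Fix}(\sigma)$ and the genus count of \Cref{rem:genus-count} against $m+n\ge 3$.
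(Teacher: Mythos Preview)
Your case split in Step~1 is correct, and the reduction to the three possibilities for $\sigma(\Gamma_1)$ is a reasonable way to organize the argument. The problem is that neither Step~2 nor Step~3 is a proof. In Step~2 the entire content is the sentence ``running this propagation\dots one finds that $D'$ is forced to contain a loop or a positive-genus piece making $g(D')>g_{i_0}$'', which is an assertion, not an argument; you never say which components $D'$ must swallow or why, and you never actually invoke the hypothesis $g_i>0$ (via \Cref{lem:subcurve}) that is what makes any such argument go through. Step~3 is similarly a promissory note: you say the quotient ``acquires a non-nodal singularity\dots or positive genus'' but give no computation. Carrying this out requires, for each configuration of $\Gamma_1\cap\Gamma_2$ and each way $\sigma$ can permute the attaching points, a local analysis via \Cref{prop:descr-inv} together with a dual-graph genus count---exactly the work you defer. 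You acknowledge this gap yourself at the end, but as written the proposal does not establish the lemma.

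The paper's proof takes a different and tighter route: rather than splitting on where $\sigma(\Gamma_1)$ lands, it splits on the pair $(m,n)$ into four cases ($m=0,\,n\ge3$; $m\ge1,\,n\ge2$; $n=1,\,m\ge2$; $n=0,\,m\ge3$) and in each gives a two- or three-line contradiction. The recurring trick is a length comparison after applying \Cref{lem:subcurve}: for example, when $m\ge1$ and $n\ge2$, the subcurve $\Gamma_1\cup\Gamma_2$ has positive genus, so \Cref{lem:subcurve} forces (up to relabeling) $\sigma(\Gamma_2)=\Gamma_2$; then $\sigma(\Gamma_1)\subset D_1$ would give $\sigma(\Gamma_1\cap\Gamma_2)\subset D_1\cap\Gamma_2$, hence $n\le 1$, a contradiction. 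Your Steps~2 and~3, once honestly filled in, would reproduce these same short arguments case by case, so the $(m,n)$ organization is both simpler to write and easier to check.
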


\begin{proof}
	Let $\pi:C\rightarrow Z$ be the quotient morphism. We start with the case $m=0$ and $n:=l(\Gamma_1\cap \Gamma_2)\geq 3$. Suppose $\sigma(\Gamma_j)=\Gamma_j$ for $j=1,2$. Recall that the only singularities that can appear in the intersection $\Gamma_1\cap \Gamma_2$ are of the form $A_{2h-1}$. As the involution does not exchange the two irreducible components, \Cref{prop:descr-inv} implies that the only possible singularities in the intersection are nodes or tacnodes with nodes as quotients. Because $n\geq 3$ we have that the quotient has two irreducible components intersecting in at least two nodes, but this does not have genus $0$. Therefore $\sigma(\Gamma_1)=\Gamma_2$.
	
	Suppose now $m\geq 1$ and $n\geq 2$. Because $n\geq 2$ then $\Gamma_1 \cup \Gamma_2$ has positive genus therefore $\sigma(\Gamma_1\cup\Gamma_2)$ and $\Gamma_1\cup\Gamma_2$ have a common component because of \Cref{lem:subcurve}. Suppose $\sigma(\Gamma_1)\neq \Gamma_2$ and thus without loss of generality $\sigma(\Gamma_2)=\Gamma_2$. If $\sigma(\Gamma_1)=\Gamma_1$, then $\pi(\Gamma_1)\cap\pi(\Gamma_2)$ contains at least a node because $n\geq 2$ and the subcurve $\pi(\Gamma_1\cup\Gamma_2\cup D_1)$ of $Z$ does not have genus $0$ as $D_1$ is connected. Therefore we can suppose $\sigma(\Gamma_1)\subset D_1$. Then $\sigma(\Gamma_1\cap\Gamma_2)\subset D_1\cap\Gamma_2$, but this implies $l(\Gamma_1\cap\Gamma_2)\leq 1$ which is in contradiction with $n\geq 2$.
	
	Now we consider the case $n=1$ and $m\geq 2$. Again it is easy to prove that we cannot have $\sigma(\Gamma_i)=\Gamma_i$ for $i=1,2$. Without loss of generality we can suppose $\sigma(\Gamma_1)\subset D_1$. However $\sigma(D_2)\neq D_2$ as $D_2\cap \Gamma_1\neq \emptyset$ and thus $\sigma(D_2)\cap D_2$ has to share an irreducible component because of \Cref{lem:subcurve}. This implies that $\Gamma_2 \subset \sigma(D_2)$ because $D_2$ is connected. Thus $\sigma(\Gamma_1)\cap\sigma(\Gamma_2)\subset D_1\cap D_2 = \emptyset$, which is absurd. The only possibility is $\sigma(\Gamma_1)=\Gamma_2$.
	
	Finally, we consider the case $n=0$ and $m\geq 3$. As above, we cannot have $\sigma(\Gamma_j)=\Gamma_j$ for $j=1,2$, otherwise the quotient does not have genus $0$. If $\sigma(\Gamma_1)\subset D_1$, then $\sigma(\Gamma_2)$ is contained in only one of the subcurves $\{D_i\}_{i=1\dots,m}$. Thus there exists at least one between the $D_i$'s, say $D_2$, which is stable under the action of the involution (because $m\geq 3$, $D_i$ is connected for every $i=1,\dots,m$ and $\dim D_i \cap \sigma(D_i) = 1$). This is absurd because $\sigma(\Gamma_1)\cap \sigma(D_2) \subset D_1\cap D_2=\emptyset$. This implies $\sigma(\Gamma_1)=\Gamma_2$ and we are done.
\end{proof}

\begin{corollary}\label{cor:action-irrcomp}
	 If $C/k$ is an $A_r$-stable curve of genus $g\geq 2$ and $\sigma_1$ and $\sigma_2$ are two hyperelliptic involution, then $\tau_{\sigma_1}=\tau_{\sigma_2}$.
\end{corollary}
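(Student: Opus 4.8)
The plan is to reduce the statement to the structural results just established. The permutation $\tau_{\sigma}$ that an involution $\sigma$ induces on ${\rm Irr}(C)$ satisfies $\tau_{\sigma}^2=\id$, so it is completely determined by the partition of ${\rm Irr}(C)$ into the components fixed by $\sigma$ and the two-element orbits $\{\Gamma,\sigma(\Gamma)\}$. Hence it suffices to show that $\sigma_1$ and $\sigma_2$ fix exactly the same components and swap exactly the same pairs.

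First I would dispose of the positive-genus components: if $\Gamma\in{\rm Irr}(C)$ has $g(\Gamma)\geq 1$, then \Cref{lem:subcurve} applied to the subcurve $\Gamma$ gives $\dim(\Gamma\cap\sigma_i(\Gamma))=1$ for $i=1,2$, and since $\Gamma$ is irreducible this forces $\sigma_i(\Gamma)=\Gamma$. So $\tau_{\sigma_1}$ and $\tau_{\sigma_2}$ both fix every positive-genus component.

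Next I would treat the genus-zero components. Suppose $\Gamma_1$ is an irreducible component with $g(\Gamma_1)=0$ and $\sigma_1(\Gamma_1)=\Gamma_2\neq\Gamma_1$; by \Cref{lem:subcurve} we must have $g(\Gamma_2)=0$ as well, so the hypotheses of \Cref{lem:exist-decomposition} are met for $\sigma_1$ and the pair $\Gamma_1,\Gamma_2$, yielding a decomposition $C=\Gamma_1\cup\Gamma_2\cup\bigcup_{i=1}^m D_i$. The lemma immediately preceding this corollary then applies to this decomposition and to the hyperelliptic involution $\sigma_2$, giving $\sigma_2(\Gamma_1)=\Gamma_2$. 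Symmetrically, every pair swapped by $\sigma_2$ is swapped by $\sigma_1$, so $\sigma_1$ and $\sigma_2$ have the same two-element orbits; and a genus-zero component fixed by one of them cannot be swapped by the other, since otherwise it would belong to a common two-element orbit. Together with the previous paragraph this yields $\tau_{\sigma_1}=\tau_{\sigma_2}$.

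I do not expect a genuine obstacle here: the corollary is essentially a bookkeeping consequence of \Cref{lem:subcurve}, \Cref{lem:exist-decomposition}, and the lemma preceding it. The only points requiring a little care are making sure that ``same fixed set'' plus ``same set of two-element orbits'' really exhausts the data of $\tau_{\sigma}$ (this is where $\sigma^2=\id$ enters), and checking that if $\sigma_1(\Gamma_1)=\Gamma_2$ while $\sigma_2(\Gamma_1)=\Gamma_3$ then applying \Cref{lem:exist-decomposition} to $\sigma_2$ and the preceding lemma to $\sigma_1$ forces $\Gamma_2=\Gamma_3$, so there is no ambiguity in which component each involution sends $\Gamma_1$ to.
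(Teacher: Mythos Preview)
Your proposal is correct and follows exactly the line the paper intends: the corollary is stated without proof precisely because it is the bookkeeping consequence you describe, combining \Cref{lem:subcurve} (which forces every hyperelliptic involution to fix each positive-genus component) with \Cref{lem:exist-decomposition} and the lemma immediately preceding the corollary (which force every hyperelliptic involution to swap the same pairs of genus-zero components). Your observation that $\tau_\sigma^2=\id$ reduces the question to matching fixed components and two-element orbits is the right framing, and your symmetry argument ruling out $\sigma_1(\Gamma_1)=\Gamma_1$ while $\sigma_2(\Gamma_1)\neq\Gamma_1$ is exactly what is needed to close the loop.
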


Let us study now the action on the intersections between irreducible components.

\begin{lemma}\label{lem:action-intersection}
	Let $\Gamma_1$ and $\Gamma_2$ two irreducible components of $C$ and $p\in \Gamma_1\cap \Gamma_2$ be a closed point. If $\sigma_1$ and $\sigma_2$ are two hyperelliptic involution of $C$, then $\sigma_1(p)=\sigma_2(p)$. 
\end{lemma}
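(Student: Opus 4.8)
The plan is to exploit \Cref{cor:action-irrcomp}, which says that $\sigma_1$ and $\sigma_2$ induce the \emph{same} permutation $\tau$ of $\mathrm{Irr}(C)$, together with the fact that each quotient $Z_i:=C/\sigma_i$ has genus $0$, so that its dual graph is a tree: no loops, and no two vertices joined by more than one edge.

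First I would record the formal consequences. Each $\sigma_i$ restricts to an isomorphism $\Gamma_1\cup\Gamma_2\xrightarrow{\sim}\tau(\Gamma_1)\cup\tau(\Gamma_2)$, hence to a scheme isomorphism $\Gamma_1\cap\Gamma_2\xrightarrow{\sim}\tau(\Gamma_1)\cap\tau(\Gamma_2)$; in particular $\sigma_1(p),\sigma_2(p)\in\tau(\Gamma_1)\cap\tau(\Gamma_2)$ and $\#(\Gamma_1\cap\Gamma_2)=\#(\tau(\Gamma_1)\cap\tau(\Gamma_2))$. Writing $f_i:C\to Z_i$ for the quotient and $\bar\Gamma:=f_i(\Gamma)=f_i(\tau(\Gamma))$ for the image of a component $\Gamma$, at any point outside the fixed locus $f_i$ is étale, so a node of $C$ there maps to a node of $Z_i$ joining the images of the two components meeting at it; by \Cref{prop:descr-inv} and \Cref{prop:description-quotient} a fixed node maps to a node unless $\sigma_i$ interchanges its two branches, in which case it maps to a smooth point. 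In particular it suffices to show $\#(\tau(\Gamma_1)\cap\tau(\Gamma_2))\le 1$, for then $\sigma_1(p)$ and $\sigma_2(p)$ are both forced to equal the unique point of that set.

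The heart of the argument is a case analysis on how $\tau$ treats the pair $\{\Gamma_1,\Gamma_2\}$, using the tree property of $Z_i$ each time. If $\tau$ swaps $\Gamma_1$ and $\Gamma_2$, then $\bar\Gamma_1=\bar\Gamma_2$, and a point of $\Gamma_1\cap\Gamma_2$ not fixed by $\sigma_i$ would have a nodal image which is a self-node (loop) at $\bar\Gamma_1$, contradicting the tree property; hence $\sigma_i$ fixes every point of $\Gamma_1\cap\Gamma_2$ and $\sigma_1(p)=p=\sigma_2(p)$. If $\tau$ fixes both $\Gamma_1$ and $\Gamma_2$, then $\bar\Gamma_1\ne\bar\Gamma_2$ are joined in $Z_i$ by one edge for each $\sigma_i$-orbit in $\Gamma_1\cap\Gamma_2$, so the tree property forces a single orbit: $\#(\Gamma_1\cap\Gamma_2)\le 2$, and when it equals $2$ both involutions must interchange the two points, when it equals $1$ both fix it. Finally, if $\tau$ moves $\Gamma_1$ or $\Gamma_2$ to a component outside $\{\Gamma_1,\Gamma_2\}$, then since a point lies on at most two components the intersections $\Gamma_1\cap\Gamma_2$ and $\tau(\Gamma_1)\cap\tau(\Gamma_2)$ are disjoint, one checks $\bar\Gamma_1\ne\bar\Gamma_2$, and the $\sigma_i$-orbits of $(\Gamma_1\cap\Gamma_2)\sqcup(\tau(\Gamma_1)\cap\tau(\Gamma_2))$ give pairwise distinct nodes of $Z_i$ joining $\bar\Gamma_1$ and $\bar\Gamma_2$; the tree property then gives $\#(\Gamma_1\cap\Gamma_2)\le 1$ and we conclude.

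The main obstacle I anticipate is purely the bookkeeping in this last case: one has to split it into the sub-configurations $\tau(\Gamma_1)=\Gamma_1,\ \tau(\Gamma_2)\notin\{\Gamma_1,\Gamma_2\}$ and $\tau(\Gamma_1),\tau(\Gamma_2)\notin\{\Gamma_1,\Gamma_2\}$ and verify in each that the vertices $\bar\Gamma_1,\bar\Gamma_2$ are genuinely distinct and the edges being counted genuinely distinct; and, on the local side, one must confirm via \Cref{prop:descr-inv} that a node fixed by $\sigma_i$ with interchanged branches really does map to a smooth point (and that a fixed node whose branches lie on distinct components preserved by $\sigma_i$ cannot be of the type with a $1$-dimensional fixed locus), so that the loop/multi-edge counts used above are exact.
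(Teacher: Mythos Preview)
Your proof is correct and follows essentially the same approach as the paper: both use \Cref{cor:action-irrcomp} together with the tree structure of the genus-$0$ quotient to pin down the action on $\Gamma_1\cap\Gamma_2$. The paper's version is more compact in the case $\tau(\Gamma_1)\ne\Gamma_2$, handling your second and third cases at once by observing that $\pi(\Gamma_1\cap\Gamma_2)$ is a single separating node of $Z$ and then reading off the possible fibres from \Cref{prop:description-quotient}, rather than carrying out the finer edge-counting you outline.
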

\begin{proof}
 Suppose $\sigma$ is a hyperelliptic involution such that $\sigma(\Gamma_1)=\Gamma_2$. Because the quotient of $\Gamma_1\cup\Gamma_2$ by $\sigma$ is irreducible of genus $0$, we have that $\sigma(p)=p$ for every $p \in \Gamma_1\cap\Gamma_2$. Suppose now $\sigma(\Gamma_1)\neq \Gamma_2$ and denote by $\pi:C\rightarrow Z$ the quotient morphism. Clearly $\pi(\Gamma_1\cup\Gamma_2)$ is not irreducible and $\pi(\Gamma_1\cap\Gamma_2)$ is a separating node, therefore $\Gamma_1\cap\Gamma_2$ is either supported on a node, a tacnode or two disjoint nodes exchanged by the involution. In the first two cases, the intersection is supported on one point therefore there is nothing to prove. If the intersection is supported on two nodes, then every involution has to exchange them because otherwise the quotient would not have genus $0$.
\end{proof}

\begin{remark}\label{rem:part-case}
	We can say more about the case $\sigma(\Gamma_1)=\Gamma_2$. In this situation, we claim every hyperelliptic involution acts trivially on $\Gamma_1\cap \Gamma_2$ scheme-theoretically, not only set-theoretically. In fact, suppose we have a fix point $p\in \Gamma_1\cap\Gamma_2$.  We know that $\Gamma_1$ and $\Gamma_2$ are irreducible components of genus $0$ and the image of $p$ in the quotient by $\sigma$ is a smooth point. Therefore we can consider the local ring $A:=\cO_{C,p}$ and we know that the invariant subalgebra is a DVR which is denoted by $R$. By flatness, we know that $A=R[y]/(y^2-h)$ where $h$ is an element of $R$ and $\sigma$ is defined by the association $y\mapsto -y$. By hypothesis, $A$ has two minimal primes, namely $p$ and $q$, such that $p\cap q=0$. We can consider the morphism 
	$$A \rightarrow A/p\oplus A/q$$
	which is injective after completion, therefore injective. Because $R$ is a DVR, an easy computation shows that $h$ is a square, thus $A$ is of the form $R[y]/(y^2-r^2)$ with $r\in R$ and the involution is defined by the formula $\sigma(a+by)=a-by$. Clearly we have that the two minimal primes are $y-r$ and $y+r$ and the intersection $\Gamma_1\cap\Gamma_2$ is contained in the fixed locus of the action because is defined by the ideal $(y,r)$ (the fixed locus is defined by the ideal $(y)$).
 \end{remark}
Finally we can prove the theorem.
\begin{theorem}
	Let $C/k$ be a $A_r$-stable curves of genus $g\geq 2$. If $\sigma_1$ and $\sigma_2$ are two hyperelliptic involution of $C$, then $\sigma_1=\sigma_2$.
\end{theorem}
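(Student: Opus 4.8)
The plan is to prove that $\sigma_1$ and $\sigma_2$ restrict to the same morphism on every irreducible component of $C$. Since $C$ is reduced and separated and is the union of its irreducible components, the equalizer of $\sigma_1$ and $\sigma_2$ is a closed subscheme of $C$ whose support contains every component, hence all of $C$; as $C$ is reduced this forces $\sigma_1=\sigma_2$. The argument is then a case analysis on the components, organized as an induction on the genus $g$, with \Cref{prop:integral} and \Cref{lem:genus1} as base cases and the decomposition of \Cref{lem:exist-decomposition} driving the inductive step. Two facts are used throughout: \Cref{cor:action-irrcomp}, which gives $\tau_{\sigma_1}=\tau_{\sigma_2}$, so that for each component $\Gamma$ we may set $\Gamma':=\sigma_1(\Gamma)=\sigma_2(\Gamma)$; and \Cref{lem:action-intersection} together with \Cref{rem:part-case}, which say that $\sigma_1$ and $\sigma_2$ agree on every intersection scheme $\Gamma_a\cap\Gamma_b$ — set-theoretically in general, and scheme-theoretically at $A_{2h-1}$-type points.

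\textbf{Positive-genus components.} First I would treat a component $\Gamma$ with $g(\Gamma)\geq 1$. By the remark following \Cref{lem:subcurve} such a $\Gamma$ is fixed, $\Gamma'=\Gamma$, and each $\sigma_j|_\Gamma$ is a non-trivial hyperelliptic involution of $\Gamma$ (its quotient is a subcurve of $C/\sigma_j$, hence of genus $0$, and its fixed locus lies inside that of $\sigma_j$, hence is finite). If $g(\Gamma)\geq 2$, then $\Gamma$ is an integral $A_r$-stable curve and \Cref{prop:integral} gives $\sigma_1|_\Gamma=\sigma_2|_\Gamma$. If $g(\Gamma)=1$, then $\Gamma$ is an integral $A_2$-stable curve of genus $1$; since $C$ is connected it meets the rest of $C$ in a non-empty finite set of points, all smooth on $\Gamma$, which $\sigma_j$ permutes. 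Choosing one such point $p$ and putting $p':=\sigma_1(p)=\sigma_2(p)$ (possibly $p'=p$), the pair $(\Gamma,p,p')$ is a $2$-pointed $A_2$-stable curve of genus $1$ on which $\sigma_j|_\Gamma$ is the hyperelliptic involution exchanging $p$ and $p'$, so \Cref{lem:genus1} (or its subsequent remark, if $p'=p$) gives $\sigma_1|_\Gamma=\sigma_2|_\Gamma$.

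\textbf{Genus-$0$ components fixed by the involutions.} Here $\Gamma\cong\PP^1$ and $\sigma_j|_\Gamma$ is an involution of $\PP^1$, so it has exactly two fixed $k$-points and is determined by its restriction to any length-$3$ curvilinear subscheme of $\Gamma$ (equivalently, by a $2$-jet at one point, or by three distinct points with their images). Stability forces $\deg\omega_C|_\Gamma>0$, hence the total conductor-valence of $\Gamma$ in $C$ is at least $3$. Combining this with \Cref{lem:action-intersection} and, at $A_{2h-1}$-points, with the explicit scheme-theoretic action coming from \Cref{prop:descr-inv} and \Cref{rem:part-case}, one checks configuration by configuration that $\sigma_1$ and $\sigma_2$ already agree on a length-$\geq 3$ subscheme of $\Gamma$ — three distinct special points, or one special point together with a jet of length $\geq 2$ at another — hence on all of $\Gamma$. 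The one scenario needing care is when $\sigma_j$ exchanges two distinct special points of $\Gamma$: if these lie on two different other components then $\sigma_j$ must move a component (treated next), and otherwise they lie over a single component, so that a third special point — which exists by valence $\geq 3$ and is fixed — rigidifies the map.

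\textbf{Genus-$0$ components moved by the involutions, and the induction.} Finally, suppose $\sigma_j$ moves some component; it then has genus $0$ and is exchanged with another genus-$0$ component, so I would apply \Cref{lem:exist-decomposition} to this pair $\Gamma_1,\Gamma_2$, getting $C=\Gamma_1\cup\Gamma_2\cup\bigcup_{i=1}^m D_i$ with $m+n\geq 3$ (where $n=\ell(\Gamma_1\cap\Gamma_2)$), each $(D_i,P_1^i,P_2^i)$ a $2$-pointed $A_r$-stable curve of genus $g_i>0$, and $g=m+n-1+\sum_i g_i$; since $m+n\geq 3$ this forces $g_i<g$. On $\Gamma_1$ (and symmetrically $\Gamma_2$) both $\sigma_1$ and $\sigma_2$ are isomorphisms $\PP^1\to\PP^1$ agreeing on $\Gamma_1\cap\Gamma_2$ scheme-theoretically (\Cref{rem:part-case}) and sending $P_1^i\mapsto P_2^i$ for every $i$; since $n+m\geq 3$ this pins them down. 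On each $D_i$, both $\sigma_1|_{D_i}$ and $\sigma_2|_{D_i}$ are hyperelliptic involutions, so they coincide by \Cref{lem:genus1} when $g_i=1$ and by the inductive hypothesis applied to $D_i$ (of genus $g_i<g$) when $g_i\geq 2$. As $C=\Gamma_1\cup\Gamma_2\cup\bigcup_i D_i$, matching $\sigma_1$ and $\sigma_2$ on each piece completes the argument. I expect the genuine obstacle to be the genus-$0$ fixed-component case of the previous paragraph: enumerating which special points $\sigma_j$ may exchange and checking that stability always supplies enough rigid data to determine an automorphism of $\PP^1$.
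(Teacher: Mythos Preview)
Your approach is essentially the paper's: reduce to irreducible components via \Cref{cor:action-irrcomp} and \Cref{lem:action-intersection}, then treat positive-genus components by \Cref{prop:integral} and \Cref{lem:genus1}, fixed genus-$0$ components by stability, and moved genus-$0$ components via \Cref{lem:exist-decomposition}. The paper's write-up of the fixed genus-$0$ case is terser than yours but follows the same idea.

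The one point where you deviate is the inductive framing in the last paragraph, and this is where a gap appears. You want to apply the theorem (as inductive hypothesis) to each $D_i$ with $g_i\geq 2$, but the statement you are proving is for $A_r$-\emph{stable} curves, whereas \Cref{lem:exist-decomposition}(d) only gives that $(D_i,P_1^i,P_2^i)$ is a \emph{$2$-pointed} $A_r$-stable curve: $\omega_{D_i}(P_1^i+P_2^i)$ is ample, not necessarily $\omega_{D_i}$. For instance $D_i$ could carry a rational tail with both marked points on it. So the inductive hypothesis does not apply as stated. The paper sidesteps this entirely: it never recurses into the $D_i$. It simply shows, for \emph{each} irreducible component $\Gamma$ of $C$, that $\sigma_1|_\Gamma=\sigma_2|_\Gamma$, and the three cases (genus $\geq 1$; genus $0$ fixed; genus $0$ moved) are each self-contained. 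In the moved case the decomposition is used only to exhibit $m+n\geq 3$ worth of agreement data on $\Gamma_1\cup\Gamma_2$; the components sitting inside the $D_i$ are not handled by recursion but by whichever of the three cases they individually fall into. If you drop the induction and argue this way, your proof becomes the paper's.
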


\begin{proof}
	 We prove that two hyperelliptic involutions coincide restricted to a decomposition of $C$ in subcurves.
	The integral case is done in \Cref{prop:integral}. Because of \Cref{cor:action-irrcomp} and \Cref{lem:action-intersection}, we know that every hyperelliptic involution $\sigma$ acts in the same way on the set of irreducible components and on the (set-theoretic) intersection on every pair of irreducible components. 
	
	 If we consider an irreducible component of genus greater than $2$, then $\sigma$ restricts to the irreducible component, it still is a hyperelliptic involution  and we can use \Cref{prop:integral} to get the uniqueness restricted to the component. If we restrict $\sigma$ to a component $\Gamma$ of genus $1$, we still have that it is a hyperelliptic involution. If we consider a point $p \in \Gamma\cap C-\Gamma$, we get that every other hyperelliptic involution $\sigma'$ has to verify $\sigma'(p)=\sigma(p)$ thanks to \Cref{lem:action-intersection}. Then we can apply \Cref{lem:genus1} to get the uniqueness restricted to the component $\Gamma$. 
	  
	  Finally, if $\Gamma$ has genus $0$, one can have only two possibilities, namely $\sigma(\Gamma)=\Gamma$ or $\sigma(\Gamma)\neq \Gamma$. If $\sigma(\Gamma)=\Gamma$, then two different involutions have to coincide when restricted to $\Gamma$ in at least two points by stability (we have tacnodes or nodes as intersections). This easily implies they coincide on the whole component. If $\sigma(\Gamma)\neq \Gamma$, it is easy to see that every hyperelliptic involution restricted to $\sigma(\Gamma) \cup \Gamma$ are determined by an involution of $\PP^1$ (one can just choose two identification $\Gamma\simeq \PP^1$ and $\sigma(\Gamma)\simeq \PP^1$). Because $\sigma$ does not fix $\Gamma$, we can use \Cref{lem:exist-decomposition} to get a decomposition $C$ in subcurves $D_1,\dots,D_m$ with the properties listed in the lemma. The stability condition, i.e. $m+l(\Gamma\cap \sigma(\Gamma))\geq 3$, implies that two hyperelliptic involution restricted to $\Gamma\cup \sigma(\Gamma)$ are determined by two involution of $\PP^1$ which coincide restricted to a subscheme of length $m+l(\Gamma\cap\sigma(\Gamma))$, therefore they coincide.
\end{proof}

\begin{remark}
	 Notice that for the case $m=0$ and $\Gamma\cap\sigma(\Gamma)$ supported on a single point we really need \Cref{rem:part-case}. In fact for the case of two $\PP^1$ glued on a subscheme of length greater or equal than $3$ concentrated on a single point, the condition that two hyperelliptic involution have to coincide on the set-theoretic intersection is not enough to conclude they are the same. 
\end{remark}
\subsection*{Unramifiedness}

Next we focus on the unramifiedness of the map $\eta$. We are going to prove that the involution is unique´ for deformations using deformation theory of curves and of morphisms of curves.

\begin{remark}\label{rem:deformation}
Let $\cX,\cY$ be two algebraic stacks and let $f:\cY\arr \cX$ by a morphism. Suppose it is given a 2-commutative diagram
$$
\begin{tikzcd}
\spec k\arrow[r, "y"] \arrow[d] & \cY \arrow[d] \\
{\spec k[\epsilon]} \arrow[r, "x_{\epsilon}"]          & \cX                       
\end{tikzcd}
$$ 
with $k$ a field and $k[\epsilon]$ the ring of dual numbers over $k$. We define $x:=f(y)$ and $\cY_x$ the fiber product of $f$ with the morphism induced by $x$; clearly we have a lifting of $y$ from $\cY$ to $\cY_x$ which is denoted by $y$ by abuse of notation. By standard argument in deformation theory, we get the following exact sequence of vector spaces over $k$:
$$
\begin{tikzcd}
0 \arrow[r] & T_{\id}\aut_{\cY_x}(y) \arrow[r] & T_{\id}\aut_{\cY}(y) \arrow[r] & T_{\id}\aut_{\cX}(x) \arrow[lld, "\alpha_f(y)" description ] \\
& \pi_0(T_{y}\cY_x) \arrow[r]        & \pi_0(T_y\cY) \arrow[r]          & \pi_0(T_x\cX)                     
\end{tikzcd}
$$ 
where $T_x \cX$ is the groupoid of morphisms $x_{\epsilon}:\spec k[\epsilon] \arr \cX$ such that the composition with $\spec k \hookrightarrow \spec k[\epsilon]$ is exactly $x:\spec k \arr \cX$. By standard notation, we call $T_x\cX$ the tangent space of $\cX$ at $x$.
\end{remark} 

If we prove that $\pi_0(T_y\cY_x)=0$, then the morphism $f$ is fully faithful at the level of tangent spaces, and therefore unramified. We prove that this is true for the morphism $\eta$. 

First of all, we need to describe the fiber $\Htilde_C$ of the morphism $\eta: \Htilde_g^r \rightarrow \Mtilde_g^r$ in a point $C \in \Mtilde_g^r(k)$, where $k/\kappa$ is an extension of fields with $k$ algebraically closed. As the map $\eta$ is faithful, we know that $\Htilde_C$ is equivalent to a set. Given a point $(C,\sigma)\in \Htilde_C(k)$, we have that an element in $T_{(C,\sigma)}\Htilde_C$ is a pair $(C[\epsilon],\sigma_{\epsilon})$ where $C[\epsilon]$ is the trivial deformation of $C$ and $\sigma_{\epsilon}:C[\epsilon]\rightarrow C[\epsilon]$ is a deformation of $\sigma$. Therefore, we need to prove the uniqueness of the hyperelliptic involution for deformations. To do so, we study the deformations of the quotient map $\pi: C \rightarrow Z:=C/\sigma$. 

Let ${\rm Def}^{\rm fix}_{C/Z}$ be the deformation functor associated to the problem of deforming the morphism $\pi:C \rightarrow Z$ with both source and target fixed and ${\rm InfAut}(Z)$ be the deformation functor of infinitesimal automorphisms of $Z$. There is a natural morphism of deformation functors
$$ \alpha:{\rm InfAut}(Z) \longrightarrow {\rm Def}^{\rm fix}_{C/Z}$$
whose restriction to the tangent spaces $d\alpha$ induces a morphism of $k$-vector spaces. Furthermore, we have a map $$\gamma:T_{(C,\sigma)}\Htilde_C \longrightarrow {\rm Def}^{\rm fix}_{C/Z} $$ defined by the association $(C[\epsilon],\sigma_{\epsilon})\mapsto \pi_{\epsilon}:C[\epsilon]\rightarrow Z[\epsilon]\simeq C[\epsilon]/\sigma_{\epsilon}$.

\begin{remark}
It is not completely trivial that the quotient $C[\epsilon]/\sigma_{\epsilon}$ is isomorphic to the trivial deformation of $Z$. One can prove it using the fact that the morphism $\pi$ is finite reducing to the affine case.
\end{remark}

Notice that $\gamma(\sigma_{\epsilon}) \in \im{d\alpha}$ implies $\sigma_{\epsilon}=0$ because of \Cref{lem:unique-inv-quotient}. Therefore, it is enough to prove $\im{\gamma}\subset \im{d\alpha}$ to get that $T_{(C,\sigma)}\Htilde_C$ is trivial.

Let us focus on the morphism $\alpha$. The morphism $d\alpha$ can be identified with the map 
$$ \hom_{\cO_Z}(\Omega_Z,\cO_Z) \longrightarrow \hom_{\cO_Z}(\Omega_Z,\pi_*\cO_C)$$ 
induced by applying $\hom_{\cO_Z}(\Omega_Z,-)$ to the natural exact sequence
$$ 0 \rightarrow \cO_Z \rightarrow \pi_*\cO_C \rightarrow L\rightarrow 0 .$$
Clearly, if we have $\hom_{\cO_Z}(\Omega_Z, L)=0$, we get that $d\alpha$ is surjective and we have done. This is not true in general and we will see why in \Cref{ex:not-involution}. Therefore we need to treat the problem with care, studying the deformation spaces involved. 

Let us start with the case of $(C,\sigma)$, where $C$ does not have separating node. Thanks to the description in \Cref{prop:description-quotient}, we have that the quotient map $\pi:C \rightarrow Z$ is finite flat of degree $2$. The theory of cyclic covers implies that every deformation of $\pi$ still induces a hyperelliptic involution, meaning that in this case the composition 
$$ \bar{\gamma}: T_{(C,\sigma)}\Htilde_C \longrightarrow \frac{{\rm Def}^{\rm fix}_{C/Z}}{\im{d\alpha}} $$
is an isomorphism of vector spaces. 

However, if $C$ has at least one separating node, the deformations of $\pi$ do not always give a deformation of the involution of $\sigma$. 

\begin{example}\label{ex:not-involution}
	Let $C$ be an ($A_1$)-stable curve of genus $2$ over $k$ with a separating node $p \in C(k)$. We have a hyperelliptic involution $\sigma$ which fixes the two genus $1$ components such that the two points $p_1,p_2$ over the node $p$ in the normalization of $C$ are fixed by the involution as well. Therefore we have a quotient morphism $\pi:C \rightarrow Z$, where $Z$ is a genus $0$ curve with two components meeting in a separating node $q$, image of $p$ through $\pi$. The morphism $\pi$ is finite and it is flat of degree $2$ restricted to the open $C \setminus p$. On the contrary, locally around the point $p$ is finite completely ramified of degree $3$, i.e. $\pi^{-1}(q)$ is the spectrum of a length $3$ local artinian $k$-algebra. Clearly, we can deform $\pi$ without deforming source and target but making it unramified over $q$. This deformation cannot correspond to a hyperelliptic involution. 
	
	In another way, we are deforming the two involutions on the two components of $C$ such that the fixed locus moves away from $p_1$ and $p_2$. This implies that we cannot patch them together to get an involution of $C$. To sum up, we need to consider only deformations where the fixed locus of the two involutions does not move. 
	
	Now we look at the deformation spaces. Let $C_1$ and $C_2$ be the two genus $1$ component of $C$ and $Z_1$ and $Z_2$ are the two components of $Z$ (reduced schematic images of $C_1$ and $C_2$ respectively). Let $L_i$ be the quotient line bundle of $\cO_{Z_i} \hookrightarrow \pi_*\cO_{C_i}$ for $i=1,2$. An easy computation (see \Cref{lem:decomp-line-bundle}) shows that $$\hom_{\cO_Z}(\Omega_Z,L)=\hom_{\cO_{Z_1}}(\Omega_{Z_1},L_1)\oplus \hom_{\cO_{Z_2}}(\Omega_{Z_2},L_2)$$
	and in particular $\hom_{\cO_Z}(\Omega_Z,L)$ is not zero. Nevertheless, if we ask that our deformation of $\pi$ is totally ramified over the separating node, we get exactly the space $$\hom_{\cO_{Z_1}}(\Omega_{Z_1},L_1(-p_1))\oplus \hom_{\cO_{Z_2}}(\Omega_{Z_2},L_2(-p_2))$$ which is zero giving us the unicity of the hyperelliptic involution for deformations. We are going to generalize this computation to our setting.
\end{example}

Suppose $\Gamma$ is an  subcurve of $Z$, then we define the subcurve $C_{\Gamma}:=\pi^{-1}(\Gamma)_{\rm red}$ of $C$. Let $\pi_{\Gamma}:C_{\Gamma}\rightarrow \Gamma$ be the restriction of $\pi$ to $C_{\Gamma}$ and $L_{\Gamma}$ be the quotient bundle of the natural map $\cO_{\Gamma}\hookrightarrow \pi_{\Gamma,*}\cO_{C_{\Gamma}}$.
The main statement in this section is the following theorem.

\begin{theorem}
	In the situation above, the morphism $\bar{\gamma}$ factors through the following inclusion of vector spaces
	$$ \bigoplus_{\Gamma \in {\rm Irr}(Z)}\hom_{\cO_{\Gamma}}(\Omega_{\Gamma},L_{\Gamma}(-D_{\Gamma}))\subset \hom_{\cO_Z}(\Omega_{Z},L)$$
where $D_{\Gamma}$ is the Cartier divisor on $\Gamma$ defined as $\sum^{\Gamma'\neq \Gamma}_{\Gamma' \in {\rm Irr(Z)}}\Gamma \cap \Gamma'$, or equivalently $\Gamma \cap (Z - \Gamma)$.
\end{theorem}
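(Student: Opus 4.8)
\emph{Overall approach.} Recall that $\bar{\gamma}$ sends a first-order deformation $\sigma_{\epsilon}$ of $\sigma$ to the class of the induced deformation $\pi_{\epsilon}\colon C[\epsilon]\to Z[\epsilon]\simeq Z\otimes k[\epsilon]$ of the quotient morphism $\pi$, sitting inside $\hom_{\cO_Z}(\Omega_Z,L)$ via the long exact sequence obtained from $0\to\cO_Z\to\pi_*\cO_C\to L\to 0$ upon applying $\hom_{\cO_Z}(\Omega_Z,-)$; here the image of $\hom_{\cO_Z}(\Omega_Z,\cO_Z)$, i.e. $\im{d\alpha}$, is killed. Writing $\sigma_{\epsilon}=(1+\epsilon D)\circ\sigma$ for the unique vector field $D\in\H^0(C,T_C)$ (which is $\sigma$-anti-invariant, i.e. $\sigma D=-D\sigma$, because $\sigma_{\epsilon}^2=\id$), I would first record the explicit shape of $\bar{\gamma}$: computing the $\sigma_{\epsilon}$-invariants of $\pi_*\cO_{C[\epsilon]}=(\pi_*\cO_C)\otimes k[\epsilon]$ and identifying the invariant subalgebra with $\cO_Z\otimes k[\epsilon]$ through the algebra map $\bar f\mapsto f+\tfrac12\epsilon\,D(\pi^{\#}f)$ (it is an algebra map by the Leibniz rule, and $D(\pi^{\#}f)$ is anti-invariant for $f\in\cO_Z$ since $\pi^{\#}f$ is $\sigma$-invariant, hence lies in $L$), one finds $\bar{\gamma}(\sigma_{\epsilon})\in\hom_{\cO_Z}(\Omega_Z,L)$ is the homomorphism $df\mapsto\tfrac12\,D(\pi^{\#}f)$. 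The plan is then to reduce the claim to a local statement at each node of $Z$ and to check it via the classification of local involutions.

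\emph{Reduction to a local statement.} By \Cref{lem:decomp-line-bundle} there is a decomposition $\hom_{\cO_Z}(\Omega_Z,L)=\bigoplus_{\Gamma\in{\rm Irr}(Z)}\hom_{\cO_{\Gamma}}(\Omega_{\Gamma},L_{\Gamma})$, and since every $\Gamma\cong\PP^1$ and $\Omega_{\Gamma},L_{\Gamma}$ are line bundles, the subspace $\hom_{\cO_{\Gamma}}(\Omega_{\Gamma},L_{\Gamma}(-D_{\Gamma}))$ is exactly the set of homomorphisms vanishing at every point of the reduced divisor $D_{\Gamma}=\Gamma\cap(Z-\Gamma)$. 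Hence it suffices to show that for each component $\Gamma$ and each node $q\in D_{\Gamma}$ the $\Gamma$-component of $\bar{\gamma}(\sigma_{\epsilon})$ vanishes at $q$; working in the completion $\widehat{\cO}_{Z,q}$ and choosing a local coordinate $a$ of $Z$ at $q$ restricting to a uniformizer of $\Gamma$, this amounts to the inclusion $D(\pi^{\#}a)\in\mathfrak m_q\cdot L_q$.

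\emph{The local computation.} Fix $q\in D_{\Gamma}$; by \Cref{prop:description-quotient} the fibre $\pi^{-1}(q)$ is of type (n1), (n2) or (n3). In each case I would use \Cref{prop:descr-inv} — together with, in cases (n2) and (n3), the deformation statements of \Cref{lem:local-node-involution} and of its (not yet available) tacnode analogue — to put $(\widehat{\cO}_{C,\pi^{-1}(q)},\sigma)$ in normal form and read off $\widehat{\cO}_{Z,q}$, the inclusion $\cO_{Z,q}\subseteq(\pi_*\cO_C)_q$, and $L_q$: for (n1) the map $\pi$ is \'etale and $\widehat{\cO}_{C,p}\cong\widehat{\cO}_{Z,q}$ is a node; for (n2) one has $\widehat{\cO}_{C,p}=k[[x,y]]/(y^2-x^4)$ with $\sigma(x)=-x$, $\sigma(y)=y$, so $\widehat{\cO}_{Z,q}=k[[x^2,y]]/(y^2-x^4)$, $a=x^2$ and $L_q=\widehat{\cO}_{Z,q}\cdot x$; for (n3) one has $\widehat{\cO}_{C,p}=k[[u,v]]/(uv)$ with $\sigma(u)=-u$, $\sigma(v)=-v$, so $\widehat{\cO}_{Z,q}=k[[u^2,v^2]]/(u^2v^2)$, $a=u^2$ and $L_q=k[[u^2]]u\oplus k[[v^2]]v$. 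In each case the $\sigma$-anti-invariant vector fields satisfy the relevant divisibility: a vector field on a node sends a branch coordinate into the branch ideal, and $\sigma$-anti-invariance makes the pertinent coefficient even, hence in $(x^2,y)$ resp. $(u^2)$; for the tacnode the defining relation $2yB=4x^3A$ forces the coefficient $A$ of $\partial_x$ to lie in $\mathfrak m_p=(x,y)$. Substituting, $D(\pi^{\#}a)=D(u)\in(u)$ in case (n1), $D(x^2)=2xA\in x\cdot(x^2,y)$ in case (n2), and $D(u^2)=2u\,D(u)\in(u^3)$ in case (n3); in all three cases $D(\pi^{\#}a)\in\mathfrak m_q L_q$, as required.

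\emph{Conclusion and main difficulty.} Assembling the components gives $\bar{\gamma}(\sigma_{\epsilon})\in\bigoplus_{\Gamma}\hom_{\cO_{\Gamma}}(\Omega_{\Gamma},L_{\Gamma}(-D_{\Gamma}))$, which is the assertion. I expect the main obstacle to be the tacnode case (n2): since there is no ready-made analogue of \Cref{lem:local-node-involution} for $A_3$-singularities, one must establish the deformation-rigidity of the involution $x\mapsto-x$, $y\mapsto y$ of $k[\epsilon][[x,y]]/(y^2-x^4)$ directly — lifting to the normalization $k[\epsilon][[t]]^{\oplus2}$, applying the elementary conjugation lemma of \Cref{sec:1}, and checking that the resulting automorphism restricts to $A_3$ — and verify the small fact that vector fields on a tacnode have $\partial_x$-coefficient in the maximal ideal. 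A secondary point worth care is that $L$ fails to be locally free at the (n3)-nodes, so the decomposition $\hom_{\cO_Z}(\Omega_Z,L)=\bigoplus_{\Gamma}\hom_{\cO_{\Gamma}}(\Omega_{\Gamma},L_{\Gamma})$ invoked in the reduction step genuinely requires \Cref{lem:decomp-line-bundle} rather than an elementary argument.
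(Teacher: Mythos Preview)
Your vector-field description of $\bar\gamma$ is correct and your (n3) computation is essentially right, but the reduction step misreads \Cref{lem:decomp-line-bundle}. That lemma decomposes $L$ over the $A_1$-separating decomposition $\{C_i\}_{i\in I}$ of $C$, whose images $Z_i=\pi(C_i)\subset Z$ are in general \emph{reducible} (precisely when $C_i$ carries an (n1)- or (n2)-node), so it does not yield the equality $\hom_{\cO_Z}(\Omega_Z,L)=\bigoplus_{\Gamma\in\mathrm{Irr}(Z)}\hom_{\cO_{\Gamma}}(\Omega_{\Gamma},L_{\Gamma})$ that you assert. In fact that equality fails: at an (n1)- or (n2)-node $L$ is a line bundle, and the paper's final Proposition in this subsection shows that the restriction map embeds the left-hand side into the right as the \emph{proper} subspace $\bigoplus_{\Gamma}\hom_{\cO_{\Gamma}}(\Omega_{\Gamma},L_{\Gamma}(-D_\Gamma))$.

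The paper's proof exploits exactly this, in two steps matching the two kinds of node of $Z$. The intersections $Z_i\cap Z_j$ coming from the $A_1$-separating decomposition of $C$ are precisely the (n3)-nodes; there the paper uses \Cref{lem:local-node-involution} to see that any $\sigma_\epsilon$ must fix a separating node $p$ of $C$, whence $\pi_\epsilon\circ p[\epsilon]=q[\epsilon]$ and the deformation class vanishes at $q$ --- this is your (n3) computation in sheaf-theoretic language, and it is the \emph{only} place where input from $\sigma_\epsilon$ is used. For the remaining (n1)- and (n2)-nodes, internal to each $Z_i$, the twist by $-D_\Gamma$ is automatic from the structure of $\hom_{\cO_{Z_i}}(\Omega_{Z_i},L_i)$ with $L_i$ invertible, via the short exact sequence $0\to\cO_{Z_i}\to\bigoplus_\Gamma\iota_{\Gamma,*}\cO_\Gamma\to\bigoplus_n k(n)\to 0$ tensored with $L_i$; no appeal to $\bar\gamma$ is made. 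Thus the tacnode case you flag as the main obstacle --- and for which you rightly observe there is no ready analogue of \Cref{lem:local-node-involution} --- requires no separate argument, and your (n1), (n2) local computations, while not wrong, are redundant.
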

The theorem above implies the result we need to conclude the study of the unramifiedness of $\eta$.
\begin{corollary}
	For every $(C,\sigma) \in \Htilde_C$, the morphism $\bar{\gamma}\equiv 0$, which implies that $\im \gamma \subset \im{d\alpha}$. 
\end{corollary}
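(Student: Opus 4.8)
The plan is to derive the vanishing of $\bar{\gamma}$ from the theorem just proved by a degree count on each component of $Z$, using only that $\omega_C$ is ample. Since $\bar{\gamma}$ is the composition of $\gamma$ with the projection ${\rm Def}^{\rm fix}_{C/Z}\to {\rm Def}^{\rm fix}_{C/Z}/\im{d\alpha}$, proving $\bar{\gamma}\equiv 0$ is exactly what yields $\im\gamma\subset\im{d\alpha}$. By the previous theorem the image of $\bar{\gamma}$ is contained in $\bigoplus_{\Gamma\in{\rm Irr}(Z)}\hom_{\cO_\Gamma}(\Omega_\Gamma,L_\Gamma(-D_\Gamma))$ inside $\hom_{\cO_Z}(\Omega_Z,L)$, so it suffices to show that each summand vanishes.

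Fix an irreducible component $\Gamma\subset Z$. Because $Z$ is a connected reduced nodal curve of genus $0$, $\Gamma\simeq\PP^1$ and $\Omega_\Gamma\simeq\cO_{\PP^1}(-2)$, so $\hom_{\cO_\Gamma}(\Omega_\Gamma,L_\Gamma(-D_\Gamma))\simeq\H^0\bigl(\PP^1,\cO_{\PP^1}(2)\otimes L_\Gamma(-D_\Gamma)\bigr)$, which vanishes as soon as the degree $2+\deg L_\Gamma-m_\Gamma$ is negative, where $m_\Gamma:=\deg D_\Gamma$ is the number of nodes of $Z$ lying on $\Gamma$. To evaluate $\deg L_\Gamma$ I take Euler characteristics in the defining sequence $0\to\cO_\Gamma\to\pi_{\Gamma,*}\cO_{C_\Gamma}\to L_\Gamma\to 0$: since $\pi_\Gamma$ is finite, $1-p_a(C_\Gamma)=\chi(\cO_{C_\Gamma})=\chi(\cO_\Gamma)+\chi(L_\Gamma)=2+\deg L_\Gamma$, hence $\deg L_\Gamma=-1-p_a(C_\Gamma)$, and the summand vanishes precisely when $p_a(C_\Gamma)+m_\Gamma\geq 2$.

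The remaining inequality $p_a(C_\Gamma)+m_\Gamma\geq 2$ carries the only real content, and is the step I expect to require care. For the subcurve $C_\Gamma=\pi^{-1}(\Gamma)_{\rm red}$ of $C$, Noether's formula (see \Cref{rem:genus-count}) gives $\deg(\omega_C\vert_{C_\Gamma})=2p_a(C_\Gamma)-2+\delta_\Gamma$, where $\delta_\Gamma$ denotes the length of the scheme-theoretic intersection $C_\Gamma\cap\overline{C-C_\Gamma}$. By \Cref{prop:description-quotient} each of the $m_\Gamma$ nodes of $Z$ on $\Gamma$ sits under either two disjoint nodes of $C$, a tacnode, or a single (possibly non-flat) node, so contributes at most $2$ to $\delta_\Gamma$; thus $m_\Gamma\leq\delta_\Gamma\leq 2m_\Gamma$. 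Since $\omega_C$ is ample, $\deg(\omega_C\vert_{C_\Gamma})=\sum_{E\subseteq C_\Gamma}\deg(\omega_C\vert_E)>0$, so $2p_a(C_\Gamma)+\delta_\Gamma\geq 3$; combined with $\delta_\Gamma\leq 2m_\Gamma$ this forces $p_a(C_\Gamma)+m_\Gamma\geq 2$. The degenerate cases are harmless: if $Z=\Gamma$ then $C_\Gamma=C$, $m_\Gamma=0$ and $p_a(C_\Gamma)=g\geq 2$; and if $C_\Gamma$ is disconnected it is an \'etale double cover of $\PP^1$, so $p_a(C_\Gamma)=-1$ and the same estimate forces $m_\Gamma\geq 3$. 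One must be careful that $\delta_\Gamma$ is the length of the scheme-theoretic intersection of the two subcurves and not the length of a fibre of $\pi$: in case (n3) the fibre has length $3$ while the intersection contributes only $1$. Granting this, every summand of $\bigoplus_\Gamma\hom_{\cO_\Gamma}(\Omega_\Gamma,L_\Gamma(-D_\Gamma))$ is zero, hence $\bar{\gamma}\equiv 0$ and $\im\gamma\subset\im{d\alpha}$.
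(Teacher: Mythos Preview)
Your argument is correct and follows essentially the same route as the paper: reduce via the preceding theorem to showing each $\hom_{\cO_\Gamma}(\Omega_\Gamma,L_\Gamma(-D_\Gamma))$ vanishes, compute $\deg L_\Gamma=-1-p_a(C_\Gamma)$ so the relevant degree is $1-p_a(C_\Gamma)-m_\Gamma$, and use ampleness of $\omega_C$ together with the bound $\delta_\Gamma\le 2m_\Gamma$ (each node of $Z$ on $\Gamma$ contributes at most $2$ to the intersection with the complementary subcurve) to force $p_a(C_\Gamma)+m_\Gamma\ge 2$. Your treatment is in fact slightly more explicit than the paper's in two places: you spell out the adjunction identity $\deg(\omega_C\vert_{C_\Gamma})=2p_a(C_\Gamma)-2+\delta_\Gamma$ and you handle the disconnected case $p_a(C_\Gamma)=-1$ separately, whereas the paper's case analysis stops at $h_\Gamma=0$ (the $h_\Gamma=-1$ case is covered by condition (c2) of \Cref{def:hyp-A_r}, giving $m_\Gamma\ge 3$, but this is left implicit there).
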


\begin{proof}
	It is enough to prove that for every $\Gamma$ irreducible component of $Z$, we have  that $\hom_{\cO_{\Gamma}}(\Omega_{\Gamma},L_{\Gamma}(-D_{\Gamma}))=0$. Let us explain why this follows from the stability condition on $C$. Clearly $C_{\Gamma}$ is a $A_r$-prestable hyperelliptic curve, with a $2:1$-morphism over $\Gamma\simeq \PP^1$. Let $n_{\Gamma}$ be the number of nodal point on the component $\Gamma$, or equivalently the degree of $D_{\Gamma}$. Thus $\deg(\Omega_{\PP^1}^{\vee} \otimes L_{\Gamma}(-D_{\Gamma}))=+2+(-h_{\Gamma}-1-n_{\Gamma})=1-h_{\Gamma}-n_{\Gamma}$, where $h_{\Gamma}$ is the  (arithmetic) genus of $C_{\Gamma}$. The stability condition on $C$ implies that $2h_{\Gamma}-2+2n_{\Gamma}>0$ because the restriction to $C_{\Gamma}$ of the fiber of a node has at most length $2$. Therefore $\deg (\Omega_{\PP^1}^{\vee} \otimes L_{\Gamma}(-D_{\Gamma}))<0$ and we are done. Equivalently, it is clear that if $h_{\Gamma}>1$, then the degree we want to compute is negative. If $h_{\Gamma}=1$, then $C_{\Gamma}$ has at least a point of intersection with $C-C_{\Gamma}$, therefore $n_{\Gamma}>0$. If $h_{\Gamma}=0$, then $C_{\Gamma}$ intersect $C-C_{\Gamma}$ in subscheme of length at least $3$, but because the restriction to $C_{\Gamma}$ of the fiber of a node of $Z$ has at most length $2$, the support of the intersection contains at least two points, which implies $n_{\Gamma}>1$. 
\end{proof}
To prove the theorem, we reduce to the case when the map $\pi$ is flat, or equivalently there are no separating node on $C$ and then we prove the statement in that case.

\begin{definition}\label{def:sep-dec}
	Let $C$ be an $A_r$-prestable curve and let $\{C_i\}_{i \in I}$ be a set of subcurves of $C$ (see \Cref{def:subcurve} for the definition of subcurves). We  say that $\{C_i\}_{i \in I}$ is an $A_1$-separating decomposition of $C$ if the following three properties are satisfied:
\begin{enumerate}
	\item $C=\bigcup_{i \in I} C_i$,
	\item $C_i$ does not have separating nodes (for $C_i$ itself),
	\item $C_i\cap C_j$ is either empty or a separating node (of $C$) for every $i\neq j$. 
\end{enumerate} 
\end{definition}
 Such a decomposition exists and it is unique. 
 Let $(C,\sigma)$ be a hyperelliptic $A_r$-stable curve and $\{C_i\}_{i \in I}$ be its $A_1$-separating decomposition. As usual, we denote by $\pi:C \rightarrow Z:=C/\sigma$ the quotient morphism. Fix an index $i \in I$. Let $Z_i$ be the schematic image of $C_i$ through $\pi$ and $\pi_i:C_i \rightarrow Z_i$ be the restriction of $\pi$ to $C_i$. Using again the local description in \Cref{prop:description-quotient}, we get that $Z_i$ is reduced (therefore a subcurve of $Z$) and $\pi_i$ is flat. Finally, let $L_i$ the quotient of the natural map 
$$\cO_{Z_i} \hookrightarrow \pi_{i,*}\cO_{C_i}$$
which is a line bundle because $\pi_i$ is flat finite of degree $2$ (and we are in characteristic different from $2$).

The following lemma generalizes the idea in \Cref{ex:not-involution}.

\begin{lemma}\label{lem:decomp-line-bundle}
In the situation above, we have an isomorphism of coherent sheaves over $Z$
$$ L \simeq \bigoplus_{i \in I} \iota_{Z_i,*}L_i$$

where $\iota_{Z_i}:Z_i \hookrightarrow Z$ is the closed immersion of the subcurve $Z_i$ in $Z$.

\end{lemma}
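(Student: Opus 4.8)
The plan is to identify $L$ with the anti-invariant part of $\pi_*\cO_C$ under the $\mu_2$-action induced by $\sigma$, to identify $\bigoplus_{i\in I}\iota_{Z_i,*}L_i$ with the anti-invariant part of a related pushforward, and to bridge the two by means of the partial normalization of $C$ at its separating nodes. Since $2$ is invertible in $\kappa$ the group $\mu_2$ is linearly reductive, so taking anti-invariants $(-)^{-}$ is an exact functor; moreover $\cO_Z=(\pi_*\cO_C)^{+}$ is the structure sheaf of the geometric quotient, so the defining sequence $0\to\cO_Z\to\pi_*\cO_C\to L\to 0$ splits $\mu_2$-equivariantly and $L\simeq(\pi_*\cO_C)^{-}$; likewise, as each $\pi_i$ is finite flat of degree $2$, we get $\pi_{i,*}\cO_{C_i}\simeq\cO_{Z_i}\oplus L_i$ with $\mu_2$ acting by $+1$ on $\cO_{Z_i}$ and by $-1$ on $L_i$.

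First I would introduce $\nu\colon\widehat C:=\bigsqcup_{i\in I}C_i\to C$, the partial normalization of $C$ at all of its separating nodes; by the definition (and uniqueness) of the $A_1$-separating decomposition this is exactly the disjoint union of the $C_i$. As the decomposition is canonical it is $\sigma$-stable, and since $C_i\cap C_j$ is a single separating node whenever it is non-empty, each $C_i$ is in fact $\sigma$-invariant --- this is precisely the standing fact that each $\pi_i$ has degree $2$, coming from \Cref{prop:description-quotient} --- and consequently every separating node $p$ of $C$ is fixed by $\sigma$. Hence $\nu$ is $\sigma$-equivariant and there is a $\mu_2$-equivariant short exact sequence of $\cO_C$-modules
$$0\longrightarrow\cO_C\longrightarrow\nu_*\cO_{\widehat C}\longrightarrow\bigoplus_{p}k(p)\longrightarrow 0,$$
where $p$ runs over the separating nodes of $C$ (locally at $p=C_i\cap C_j$ the middle map is the inclusion $\cO_{C,p}\hookrightarrow\cO_{C_i,p}\times\cO_{C_j,p}$ with cokernel $k(p)$), and $\mu_2$ acts trivially on each $k(p)$ because $\sigma$ fixes $p$. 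Pushing forward along the finite morphism $\pi$ keeps this exact and $\mu_2$-equivariant, and since $\pi\circ\nu=q\circ\widehat\pi$ with $\widehat\pi=\bigsqcup_i\pi_i\colon\widehat C\to\widehat Z:=\bigsqcup_iZ_i$ and $q\colon\widehat Z\to Z$ restricting to $\iota_{Z_i}$ on each $Z_i$, we obtain
$$\pi_*\nu_*\cO_{\widehat C}=\bigoplus_{i\in I}\iota_{Z_i,*}\pi_{i,*}\cO_{C_i}\simeq\Big(\bigoplus_{i}\iota_{Z_i,*}\cO_{Z_i}\Big)\oplus\Big(\bigoplus_{i}\iota_{Z_i,*}L_i\Big),$$
the last splitting being the $\mu_2$-eigenspace decomposition. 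Applying $(-)^{-}$ to $0\to\pi_*\cO_C\to\pi_*\nu_*\cO_{\widehat C}\to\bigoplus_pk(\pi(p))\to 0$ and using that the rightmost term carries the trivial $\mu_2$-action, hence has vanishing anti-invariants, yields $L\simeq(\pi_*\cO_C)^{-}\simeq\bigoplus_{i\in I}\iota_{Z_i,*}L_i$; unwinding the construction shows this isomorphism is the one induced by the restrictions $\cO_C\to\iota_{C_i,*}\cO_{C_i}$, exactly as in \Cref{ex:not-involution}.

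The step that will need the most care --- rather than a genuine obstacle --- is the equivariance bookkeeping: checking that $\widehat C=\bigsqcup_iC_i$ really is the partial normalization of $C$ at its separating nodes, that $\sigma$ stabilizes each $C_i$ (equivalently that $\pi_i$ has degree $2$ and not $1$; a piece swapped with another would have genus $0$ by \Cref{lem:subcurve}, and one rules this out against the stability of $C$ and the genus $0$ of the quotient), and hence that every separating node is $\sigma$-fixed so that the skyscrapers $k(p)$ are trivial $\mu_2$-modules. All of this is forced by the combinatorics of the decomposition together with \Cref{prop:description-quotient}; the remainder --- exactness of $\pi_*$ for the finite morphism $\pi$, exactness of $(-)^{-}$, and the degree-two eigenspace splitting --- is formal. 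As a consistency check one can instead verify the isomorphism locally on $Z$, where by \Cref{prop:descr-inv} it reduces to an explicit computation of $(\pi_*\cO_C)^{-}$ at a node or a tacnode, but the global eigenspace argument avoids that case analysis.
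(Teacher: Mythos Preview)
Your argument is correct and takes a somewhat different route from the paper's. The paper sets up the commutative square
\[
\begin{tikzcd}
\cO_Z \arrow[r, hook] \arrow[d, hook] & \bigoplus_{i} \iota_{Z_i,*}\cO_{Z_i} \arrow[d, hook] \\
\pi_*\cO_{C} \arrow[r, hook] & \bigoplus_{i}\pi_*\iota_{C_i,*}\cO_{C_i}
\end{tikzcd}
\]
and applies the Snake lemma: the vertical cokernels are $L$ and $\bigoplus_i\iota_{Z_i,*}L_i$, so one is reduced to showing that the map between the \emph{horizontal} cokernels is an isomorphism, which is then checked by a direct local computation at each separating node. You instead organize everything via the $\mu_2$-action: you push forward the partial-normalization sequence $0\to\cO_C\to\nu_*\cO_{\widehat C}\to\bigoplus_p k(p)\to 0$ along $\pi$ and take anti-invariants, using that each separating node $p$ is $\sigma$-fixed (indeed $\sigma$ acts by $(c_2)$ of \Cref{prop:descr-inv} there), so the skyscrapers carry the trivial $\mu_2$-action and contribute nothing to $(-)^{-}$. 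The two proofs ultimately hinge on the same local fact at the separating nodes, but your packaging via eigenspaces replaces the paper's explicit comparison of the two horizontal cokernels by the single observation that $k(p)^{-}=0$; this is arguably cleaner and makes transparent why the involution is what drives the decomposition. The paper's Snake-lemma argument, on the other hand, never invokes the $\mu_2$-action explicitly and would adapt verbatim to situations without a group action where one still has compatible decompositions of source and target.
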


\begin{proof}
 
The commutative diagram
$$\begin{tikzcd}
\bigsqcup_{i \in I} C_i \arrow[r, "\bigsqcup \iota_{C_i}"] \arrow[d, "\bigsqcup \pi_i"'] & C \arrow[d, "\pi"] \\
\bigsqcup_{i\in I}Z_i \arrow[r, "\bigsqcup \iota_{Z_i}"]                    & Z                 
\end{tikzcd}
$$ 
 induces a commutative diagram at the level of structural sheaves
$$
\begin{tikzcd}
\cO_Z \arrow[r, hook] \arrow[d, hook] & \bigoplus_{i \in I} \iota_{Z_i,*}\cO_{Z_i} \arrow[d, hook] \\
\pi_*\cO_{C} \arrow[r, hook]          & \bigoplus_{i \in I}\pi_*\iota_{C_i,*}\cO_{C_i}.                 
\end{tikzcd}
$$ 
We want to prove that the map induced between the quotients of the two vertical maps is an isomorphism, in fact the quotient of the right-hand vertical map is trivially isomorphic to $\bigoplus_{i \in I} \iota_{Z_i,*}L_i$ by construction. By the Snake lemma, it is enough to prove that the induced morphism between the two horizontal quotients is an isomorphism. This follows from a local computation in the separating nodes.   
\end{proof}

\begin{remark}
	Because of the fundamental exact sequence for the differentials associated to the immersion $\iota_{Z_i}:Z_i\hookrightarrow Z$, we have that 
	$$ \hom_{\cO_{Z_i}}(\Omega_{Z_i},L_i)\simeq  \hom_{\cO_{Z_i}}(\iota_{Z_i}^*\Omega_{Z},L_i)$$
	and therefore the previous lemma implies that 
	$$ \hom_{\cO_Z}(\Omega_Z,L) \simeq \bigoplus_{i \in I} \hom_{\cO_{Z_i}}(\Omega_{Z_i},L_{i}).$$
	It is easy to see that the last isomorphism can be described using deformations just restricting a deformation of $\pi$ with both source and target fixed to a deformation of $\pi_i$ with both source and target fixed.
	
\end{remark}

\begin{proposition}
  In the notation above, the map $\bar{\gamma}$ factors through the inclusion of vector spaces
  
  $$ \bigoplus_{i\in I}\hom_{\cO_{Z_i}}(\Omega_{Z_i},L_i(-D_i)) \subset \hom_{\cO_Z}(\Omega_Z, L) $$
  
  where $D_i$ is the Cartier divisor on $Z_i$ defined as $\Sigma_{j \in I, j\neq i} (Z_i \cap Z_j)$.
\end{proposition}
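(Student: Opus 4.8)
The plan is to exploit the decomposition $\hom_{\cO_Z}(\Omega_Z,L)\simeq\bigoplus_{i\in I}\hom_{\cO_{Z_i}}(\Omega_{Z_i},L_i)$ of the remark after \Cref{lem:decomp-line-bundle}, which is realized on deformations by restricting a deformation of $\pi$ with fixed source and target to a deformation of $\pi_i$. Under it the $i$\dash th component of $\bar\gamma(\sigma_\epsilon)$ is the class built from $\sigma_i:=\sigma\vert_{C_i}$ by the same recipe applied to $\pi_i\colon C_i\to Z_i$, so it suffices to fix $i$ and prove that this class lies in $\hom_{\cO_{Z_i}}(\Omega_{Z_i},L_i(-D_i))$, i.e. that the associated section of $\Omega_{Z_i}^{\vee}\otimes L_i$ vanishes at each point of $D_i$. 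I first record that $\sigma$ preserves each $C_i$: if $\sigma$ exchanged $C_i$ with a different, hence disjoint, subcurve of the $A_1$-separating decomposition, then following the tree of subcurves outward one would reach a subcurve that is moved by $\sigma$ and attached to the rest of $C$ at a single separating node, i.e. either a genus $0$ tail (excluded by $A_r$-stability) or a positive genus subcurve disjoint from its $\sigma$\dash image (excluded by \Cref{lem:subcurve}). Thus $\sigma$ restricts to $\sigma_i$ on $C_i$ and $\pi_i$ is finite flat of degree $2$.

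The remaining statement is local at a point $q\in D_i$. Let $p\in C_i$ be the point above $q$. By \Cref{prop:description-quotient} (cases ${\rm (n2)}$ and ${\rm (n3)}$) $p$ is a single point, supporting a node of $C$, and $p$ is a fixed point of $\sigma_i$ and a smooth point of $C_i$; since $\pi_i$ is finite flat of degree $2$ with $p$ fixed and $\cha\kappa\neq 2$, it is ramified at $p$. After a change of coordinate (as in the lemma following \Cref{rem:norm}) we may assume $\widehat{\cO}_{C_i,p}\simeq k[[x]]$ with $\sigma_i$ acting by $x\mapsto -x$, that $\pi_i$ is $u\mapsto x^2$ onto $\widehat{\cO}_{Z_i,q}\simeq k[[u]]$, and that $L_i$ is the odd summand of $\widehat{\pi_{i,*}\cO_{C_i}}=k[[u]]\oplus k[[u]]\,x$. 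The key point is that a deformation $\sigma_{i,\epsilon}$ of $\sigma_i$ coming from a class in $T_{(C,\sigma)}\Htilde_C$ fixes $p$: it is the restriction of an automorphism $\sigma_\epsilon$ of the trivial deformation $C[\epsilon]$, and such an automorphism must fix the canonically distinguished node of $C$ over $q$. Writing $\sigma_{i,\epsilon}(x)=-x+\epsilon\psi(x)$, the relation $\sigma_{i,\epsilon}^2=\id$ forces $\psi$ even and $\sigma_{i,\epsilon}(p)=p$ forces $\psi(0)=0$, whence $\psi(x)=x^2\,\widetilde\psi(x^2)$ for some power series $\widetilde\psi$.

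Now I conjugate $\sigma_{i,\epsilon}$ by the automorphism $x\mapsto x-\tfrac{\epsilon}{2}\psi(x)$ of $k[\epsilon][[x]]$, which brings it back to $x\mapsto -x$; reading off the induced deformation of the quotient map and projecting to $L_i$ — a projection insensitive to the choice of identification of $C_i[\epsilon]/\sigma_{i,\epsilon}$ with the trivial deformation $Z_i[\epsilon]$, since that ambiguity lies in the $\cO_{Z_i}$\dash summand — shows that near $q$ the $i$\dash th component of $\bar\gamma(\sigma_\epsilon)$ is the local generator of $\Omega_{Z_i}^{\vee}\otimes L_i$ times $-u\,\widetilde\psi(u)$, which vanishes at $q$. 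The tacnode case ${\rm (n2)}$ is identical once one substitutes the corresponding local models for $\sigma$ and $\pi$ from \Cref{prop:descr-inv}. Ranging over all $q\in D_i$ gives that the $i$\dash th component of $\bar\gamma$ lands in $\hom_{\cO_{Z_i}}(\Omega_{Z_i},L_i(-D_i))$; summing over $i$ proves the proposition.

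The main obstacle — and the only step that is not bookkeeping — is the claim $\sigma_{i,\epsilon}(p)=p$: that an infinitesimal hyperelliptic involution cannot drift the separating nodes of $C$. This is precisely what produces the extra vanishing $\psi(0)=0$ and hence the twist by $-D_i$; without it one only obtains that $\bar\gamma$ lands in $\hom_{\cO_{Z_i}}(\Omega_{Z_i},L_i)$, which is the obstruction exhibited in \Cref{ex:not-involution}. It rests on nodes being canonical points of a curve, and — to make the reduction to each $C_i$ — on \Cref{lem:subcurve} and the exclusion of branch\dash exchanging involutions from \Cref{rem:fix-locus}; everything else is local analysis governed by \Cref{prop:descr-inv} and \Cref{lem:local-node-involution}.
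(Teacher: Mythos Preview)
Your proof is correct and follows essentially the same approach as the paper: both hinge on the observation that any deformation $\sigma_\epsilon$ of the hyperelliptic involution must fix the trivial thickening $p[\epsilon]$ of each separating node, and then translate this into the vanishing at $q=\pi(p)$ of the $L_i$-component of the deformation class. The paper states this translation in one line (``the condition $\pi_{i,\epsilon}\circ p[\epsilon]=q[\epsilon]$ translates into the condition $\delta_{i,\epsilon}(p)=0$''), whereas you carry out the local computation explicitly --- writing $\sigma_{i,\epsilon}(x)=-x+\epsilon\psi(x)$, extracting $\psi(0)=0$ from the fixed-point condition and evenness from $\sigma_{i,\epsilon}^2=\id$, and conjugating to read off the class $-u\,\widetilde\psi(u)$ in $L_i$. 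Your argument that $\sigma$ preserves each $C_i$ is also a step the paper leaves implicit.

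One small remark: the case ${\rm (n2)}$ you mention cannot occur here. A point $q\in D_i$ is by definition $\pi(p)$ for a separating node $p=C_i\cap C_j$ of $C$; since $p$ is a node (not a tacnode) and $\sigma(p)\in\sigma(C_i)\cap\sigma(C_j)=C_i\cap C_j=\{p\}$ rules out ${\rm (n1)}$, only ${\rm (n3)}$ applies. This does not affect your argument, which goes through verbatim in that case.
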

\begin{proof}
	Let us start with a deformation of the hyperelliptic involution $\sigma_{\epsilon}:C[\epsilon]\rightarrow C[\epsilon]$. Let $p:\spec k \hookrightarrow C$ be a separating nodal point, then a local computation around $p$ in $C[\epsilon]$ shows that the following diagram is commutative
	$$\begin{tikzcd}
	{\spec k[\epsilon]} \arrow[r, "{p[\epsilon]}", hook] \arrow[d, "{p[\epsilon]}"', hook] & {C[\epsilon]} \\
	{C[\epsilon]} \arrow[ru, "\sigma_{\epsilon}"]                                          &              
	\end{tikzcd}$$
	where $p[\epsilon]$ is the trivial deformation of $p$. This means that every deformation of the hyperelliptic involution (where the curve $C$ is not deformed) cannot deform around the separating nodes. This also implies that the same is true for $\pi_{\epsilon}$ the quotient morphism (which can be associated to the element $\bar{\gamma}(\sigma_{\epsilon})$), namely $\pi_{\epsilon}\circ p[\epsilon]= q[\epsilon]$ where $q=\pi(p)$. Furthermore, the same is true for the restriction $\pi_{i,\epsilon}$ of $\pi_{\epsilon}$ to $C_i[\epsilon]$ for every $i \in I$. Therefore, the statement follows from the following fact: given the element $\delta_{i,\epsilon}$ representing $\pi_{i,\epsilon}$ in $\hom_{\cO_{Z_i}}(\Omega_{Z_i},L_i)$, the condition $ \pi_{i,\epsilon}\circ p[\epsilon]= q[\epsilon]$ translates into the condition $\delta_{i,\epsilon}(p)=0$, which implies $\delta_{i,\epsilon} \in \hom_{\cO_{Z_i}}(\Omega_{Z_i},L_i(-p))$. 
\end{proof}

\begin{remark}
		Notice that the intersection $Z_i \cap Z_j$ is a smooth point on $Z_i$, therefore a Cartier divisor.
\end{remark}

Finally, we have reduced ourself to study the case of $C$ having no separating nodes, or equivalently the quotient morphism $\pi:C \rightarrow Z$ being flat. We have to describe the whole $\hom_{\cO_Z}(\Omega_{Z},L)$ when $(C,\sigma)$ is a hyperelliptic $A_r$-prestable curve without separating node, because every deformation of the morphism $\pi$ gives rise to a deformation of $\sigma$. 

Let $\{ Z_i \}_{i \in I}$ be the $A_1$-separating decomposition of $Z$ (or equivalently the decomposition in irreducible components as $Z$ has genus 0). We denote by $\pi_i:C_i \rightarrow Z_i$ the restriction of $\pi:C\rightarrow Z$ to $Z_i$, i.e. $C_i:=Z_i\times_Z C$. Again $L_i$ is the quotient line bundle of the natural morphism $\cO_{Z_i} \hookrightarrow \pi_{i,*}\cO_{C_i}$. 

\begin{remark}
 	Notice that in this situation we are just considering the pullback of $\pi$, while in the case of separating nodes we needed to work with the restriction to the subcurve $C_i$. The reason is that in the case of a $A_1$-separating decomposition of $C$, $\pi^{-1}(Z_i)=\pi^{-1}\pi(C_i)$ was set-theoretically equal to $C_i$, but not schematically. In fact, $\pi^{-1}Z_i$ is not reduced, and it has embedded components supported on the restrictions of the separating node. 
\end{remark}

This last proposition finally gives us the theorem.
\begin{proposition}
	In the notation above (if $C$ has no separating nodes),
	$$\hom_{\cO_Z}(\Omega_{Z},L) \simeq \bigoplus_{i \in I}\hom_{\cO_{Z_i}}(\Omega_{Z_i},L_i(-D_i))$$
	where $D_i$ is the Cartier divisor on $Z_i$ defined as $\sum_{j \in I. j \neq i}(Z_i \cap Z_j)$.
\end{proposition}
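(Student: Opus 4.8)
The plan is to first upgrade the statement to an isomorphism of coherent sheaves on $Z$,
$$\curshom_{\cO_Z}(\Omega_Z,L)\;\simeq\;\bigoplus_{i\in I}\iota_{Z_i,*}\,\curshom_{\cO_{Z_i}}\bigl(\Omega_{Z_i},L_i(-D_i)\bigr),$$
and then take global sections: since $L$ is invertible the left side computes $\hom_{\cO_Z}(\Omega_Z,L)$, and for each $i$ the sheaf $\iota_{Z_i,*}(-)$ has the same cohomology on $Z$ as $(-)$ on $Z_i$, so the right side globalizes to $\bigoplus_i\hom_{\cO_{Z_i}}(\Omega_{Z_i},L_i(-D_i))$.

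There are two preliminary reductions I would record first. Since $\pi\colon C\to Z$ is finite flat of degree $2$ and $2$ is invertible in $\kappa$, the trace splitting gives $\pi_*\cO_C=\cO_Z\oplus L$ with $L$ invertible; tensoring the exact sequence $0\to\cO_Z\to\pi_*\cO_C\to L\to 0$ with $\cO_{Z_i}$ (which stays left exact because $L$ is locally free) and using $\iota_{Z_i}^*\pi_*\cO_C\simeq\pi_{i,*}\cO_{C_i}$ for the affine morphism $\pi$, one identifies $L_i\simeq\iota_{Z_i}^*L$. Second, as $L$ is invertible, $\curshom_{\cO_Z}(\Omega_Z,L)\simeq T_Z\otimes L$, where $T_Z:=\curshom_{\cO_Z}(\Omega_Z,\cO_Z)$ is the tangent sheaf; it therefore suffices to describe $T_Z$.

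The key point is the local structure of $T_Z$ at a node. Let $\nu\colon\widetilde Z\to Z$ be the normalization; since $Z$ has genus $0$ every node is separating, so $\widetilde Z=\bigsqcup_{i\in I}Z_i$ and $\nu$ restricted to each piece is the closed immersion $\iota_{Z_i}$. Write $\widetilde D:=\nu^{-1}(Z_{\mathrm{sing}})_{\mathrm{red}}$, so that $\widetilde D\cap Z_i=D_i$. At a node $p=Z_i\cap Z_j$ with $\widehat\cO_{Z,p}\simeq k[[u,v]]/(uv)$, a $k$-derivation sending $u\mapsto a$, $v\mapsto b$ descends to the quotient if and only if $ub+va=0$; decomposing $a,b$ in $k\oplus uk[[u]]\oplus vk[[v]]$ one finds this forces $a\in uk[[u]]$ and $b\in vk[[v]]$, i.e.\ $T_{Z,p}$ is exactly the submodule of $\nu_*T_{\widetilde Z}$ of vector fields vanishing along $\widetilde D$. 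Globally this gives $T_Z\simeq\nu_*\bigl(T_{\widetilde Z}(-\widetilde D)\bigr)=\bigoplus_i\iota_{Z_i,*}\bigl(T_{Z_i}(-D_i)\bigr)$. Combining with the projection formula, the identification $\iota_{Z_i}^*L\simeq L_i$, and $T_{Z_i}(-D_i)\otimes L_i\simeq\curshom_{\cO_{Z_i}}(\Omega_{Z_i},L_i(-D_i))$ yields the displayed sheaf isomorphism, and passing to global sections gives the proposition.

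The only genuinely delicate step is the node computation, and in particular the bookkeeping that the restriction $\hom_{\cO_Z}(\Omega_Z,L)\to\bigoplus_i\hom_{\cO_{Z_i}}(\Omega_{Z_i},L_i)$ lands exactly in the subspace where each component vanishes at the nodes of $Z_i$ — this is precisely what produces the twist $-D_i$, and it is forced by the fact that a derivation of $k[[u,v]]/(uv)$ cannot have a nonzero constant term along either branch. Everything else — the trace splitting, flat base change for the affine morphism $\pi$, and the projection formula — is routine.
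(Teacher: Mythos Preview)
Your proof is correct and is essentially the same argument as the paper's, just organized at the sheaf level rather than via global $\hom$. The paper tensors the normalization sequence $0\to\cO_Z\to\bigoplus_i\iota_{Z_i,*}\cO_{Z_i}\to\bigoplus_{n\in N(Z)}k(n)\to 0$ by $L$, applies $\hom_{\cO_Z}(\Omega_Z,-)$, and reads off that an element of $\hom_{\cO_Z}(\Omega_Z,L)$ is a tuple $\{f_i\}\in\bigoplus_i\hom_{\cO_{Z_i}}(\Omega_{Z_i},L_i)$ with $f_i(n)=0$ at each node on $Z_i$; you instead compute $T_Z\simeq\nu_*\bigl(T_{\widetilde Z}(-\widetilde D)\bigr)$ directly from the local derivation condition $ub+va=0$ in $k[[u,v]]/(uv)$ and then tensor with $L$ via the projection formula. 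Both routes rest on the same nodal computation and the same identifications $\iota_{Z_i}^*L\simeq L_i$ and $\hom_{\cO_{Z_i}}(\iota_{Z_i}^*\Omega_Z,L_i)\simeq\hom_{\cO_{Z_i}}(\Omega_{Z_i},L_i)$; your packaging as a sheaf isomorphism is slightly cleaner but not substantively different.
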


\begin{proof}
  Firstly, we consider the exact sequence
  $$ 0 \rightarrow \cO_Z \rightarrow \bigoplus_{i \in I}\iota_{Z_i,*} \cO_{Z_i} \rightarrow \bigoplus_{n \in N(Z)}k(n) \rightarrow 0 $$
  where $N(Z)$ is the set of nodal point of $Z$.
  Because $L$ is a line bundle ($\pi$ is flat), if we tensor the exact sequence with $L$ and then apply the functor $\hom_{\cO_Z}(\Omega_{Z},-)$, we end up with an exact sequence
  $$ 0 \rightarrow \hom_{\cO_Z}(\Omega_{Z},L) \rightarrow \bigoplus_{i \in I}\hom_{\cO_{Z_i}}(\iota_{Z_i}^*\Omega_Z,\iota_{Z_i}^*L) \rightarrow \bigoplus_{n \in N(Z)}\hom_{\cO_Z}(\Omega_Z,k(n)). $$
  The flatness of $\pi$ implies that $\iota_{Z_i}^*L \simeq L_{i}$ and using the fundamental exact sequence of the differentials we get
  $$\hom_{\cO_{Z_i}}(\iota_{Z_i}^*\Omega_Z,L_i) \simeq \hom_{\cO_{Z_i}}(\Omega_{Z_i},L_i).$$
  Notice that $\hom_{\cO_Z}(\Omega_{Z},k(n))=k(n)dx\oplus k(n)dy$ where $dx,dy$ are the two generators of $\Omega_Z$ locally at the node $n \in Z$. Therefore, an element $f \in \hom_{\cO_Z}(\Omega_{Z},L)$ is the same as an element $\{f_i\} \in \bigoplus_{i \in I}\hom_{\cO_{Z_i}}(\Omega_{Z_i},L_i)$ such that $f_i(n)=0$ for every $n \in N(Z) \cap Z_i$.
\end{proof}

\subsection*{Universally closedness}

This section is dedicated to prove that $\eta$ is universally closed. The valuative criterion tells us that $\eta$ is universally closed if and only if for every diagram 
$$
\begin{tikzcd}
\spec Q \arrow[r] \arrow[d, hook] & \Htilde_g^r \arrow[d, "\eta"] \\
\spec R \arrow[r]                 & \Mtilde_g^r                  
\end{tikzcd}
$$
where $R$ is a complete DVR with algebraically closed residue field $k$ and field of fractions $K$, there exists a lifting 
$$
\begin{tikzcd}
\spec K \arrow[r] \arrow[d, hook]    & \Htilde_g^r \arrow[d, "\eta"] \\
\spec R \arrow[r] \arrow[ru, dashed] & \Mtilde_g^r                  
\end{tikzcd}
$$
which makes everything commutes. 

This amounts to extending the hyperelliptic involution from the general fiber of a family of $A_r$-stable curves over a DVR to the whole family. The precise statement is the following. 

\begin{theorem}\label{theo:univ-closed}
	Let $C_R\rightarrow \spec R$ be a family of $A_r$-stable curves over $R$, which is a complete DVR with fraction field $K$ and algebraically closed residue field $k$. Suppose there exists a hyperelliptic involution $\sigma_K$ of the general fiber $C_K \rightarrow \spec K$. Then there exists a unique hyperelliptic involution $\sigma_R$ of the whole family $C_R\rightarrow \spec R$ such that $\sigma_R$ restrict to $\sigma_K$ over the general fiber.
\end{theorem}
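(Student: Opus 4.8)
\medskip

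The plan is to translate everything into the language of cyclic covers of genus-$0$ twisted curves and then carry out an equivariant version of semistable reduction on the quotient side. Two preliminary remarks make the statement more flexible. First, \emph{uniqueness}: since $C_R\to\spec R$ is l.c.i., the total space $C_R$ is Cohen--Macaulay, hence has no embedded components; as $C_K$ is reduced and $R$-flat, $C_K\hookrightarrow C_R$ is schematically dense, so two involutions of $C_R/R$ restricting to $\sigma_K$ coincide, and similarly $\sigma_R^2=\id$ follows from $\sigma_K^2=\id$ by density. Second, the hyperelliptic property comes for free: once we have \emph{any} involution $\sigma_R$ of $C_R/R$ extending $\sigma_K$, \Cref{lem:conn-comp} shows that the locus of $s\in\spec R$ over which $(C_s,\sigma_s)$ is hyperelliptic is open and closed; it contains the generic point, so it is all of $\spec R$ and $\sigma_R$ is a hyperelliptic involution. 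Finally, since $\eta$ has already been shown to be unramified and injective on geometric points, $\Delta_\eta$ is an open immersion and surjective, so $\eta$ is a monomorphism; hence the extension, if it exists after a finite extension $R\subset R'$, is unique over $R'$, so it is $\operatorname{Gal}$-equivariant and descends to $C_R$. It therefore suffices to produce $\sigma_R$ after a finite base change.

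\medskip

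Via \Cref{prop:cyclic-covers} and \Cref{prop:descr-hyper}, the datum $(C_K,\sigma_K)$ corresponds to a triple $(\cZ_K,\cL_K,i_K)\in\cC(2,g,r)(K)$: a twisted genus-$0$ curve, a line bundle with $\chi(\cL_K)=-g$, and an injection $i_K\colon\cL_K^{\otimes2}\hookrightarrow\cO_{\cZ_K}$ whose cozero locus is the branch divisor $B_K$, of colength $2g+2$, with $C_K=\spec_{\cZ_K}(\cO_{\cZ_K}\oplus\cL_K)$. Now run stable reduction on the quotient side: after passing to a suitable $R'$, the pair $(\cZ_K,B_K)$ — essentially a genus-$0$ curve carrying $2g+2$ weighted (possibly non-reduced) points — extends, as in the theory of twisted admissible covers, to a family of twisted genus-$0$ curves $\cZ_{R'}\to\spec R'$ together with the limiting cozero section; the twisted structure on the central fibre is forced precisely so that $\cO_{\cZ_{R'}}\oplus\cL_{R'}$ stays a flat algebra with $A_r$-fibres, which is exactly what \Cref{prop:descr-inv} and the numerical conditions of \Cref{def:hyp-A_r} control (only separating nodes over the stacky locus, correct vanishing order of $i^\vee$ along the special fibre). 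Taking the associated double cover produces an object $(C'_{R'},\sigma'_{R'})\in\Htilde_g^r(R')$ with generic fibre $(C_K,\sigma_K)\otimes_KK'$.

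\medskip

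The remaining, and hardest, step is to identify $C'_{R'}$ with the given family $C_R\otimes_RR'$ — and this is genuinely not automatic, because one cannot appeal to separatedness of $\Mtilde_g^r$ here (it fails for $r\ge3$, where a nodal and a tacnodal degeneration of the same smooth curve coexist), so a priori the double cover just built could be a different $A_r$-stable limit than $C_R$. The way out is to compare both families with the classical \emph{stable} model of $C_R$: by properness of the classical map $\Hbar_g\to\Mbar_g$ the hyperelliptic involution extends over the stable model, and this constrains the dual graph of the central fibre and the shape of the limiting branch divisor enough to pin down the twisted curve $\cZ_k$, hence $C'_k$. One then checks, by a case analysis over the components of $\cZ_k$, that the configurations incompatible with $C_R$ cannot arise — for instance a rational bridge whose double cover would be a non-separating node over a stacky node, or a component of $C_k$ becoming the whole fixed locus as in \Cref{rem:fix-locus} — so that $C'_{R'}\cong C_R\otimes_RR'$; transporting $\sigma'_{R'}$ through this isomorphism and descending as above gives the desired $\sigma_R$. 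I expect this matching step to be the main obstacle: controlling exactly which genus-$0$ components and which singularities are allowed in the limit $\cZ_k$, and excluding the impossible ones, is the one place where the classical properness of $\Hbar_g\to\Mbar_g$ is really used — everything else is the cyclic-cover dictionary of \Cref{sec:2} together with the local analysis of involutions on $A_r$-singularities from \Cref{sec:1}.
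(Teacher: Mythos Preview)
Your preliminary reductions are fine, and the observation that uniqueness plus the monomorphism property of $\eta$ let you work after a finite extension of $R$ is correct. The gap is in the main step, and it is structural rather than technical.

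You build a hyperelliptic limit $(C'_{R'},\sigma'_{R'})$ by running a reduction procedure on the \emph{quotient} side, starting from $(\cZ_K,\cL_K,i_K)$. But this construction uses only the generic fibre $C_K$ and never sees the given family $C_R$. Since $\Mtilde_g^r$ is not separated for $r\ge 2$, there is no mechanism forcing your $C'_{R'}$ to coincide with $C_R\otimes_R R'$: the same smooth hyperelliptic $C_K$ can have, say, a nodal $A_r$-stable limit and a tacnodal one, and your quotient-side reduction has no way of knowing which one you were handed. Your proposed fix --- compare both with the common stable model --- does not close this: the stable model is the same for \emph{every} $A_r$-stable limit of $C_K$, so it cannot distinguish them. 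The ``case analysis over the components of $\cZ_k$'' you allude to would have to \emph{choose} $\cZ_{R'}$ as a function of the given $C_k$, but you never say how; and without $\sigma_R$ already in hand there is no natural quotient of $C_R$ to take.

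The paper avoids this identification problem entirely by never leaving $C_R$. After reducing to generically smooth families, it uses normality of the surface $C_R$ to extend $\sigma_K$ to a rational self-map $\sigma'$ defined off a codimension-$2$ set, and then shows via a graph-closure argument that $\sigma'$ is everywhere regular provided it contracts no component of $C_k$. The non-contraction is checked using the canonical map $\phi_{|\omega_{C_R}|}$ (which commutes with $\sigma'$ because it does so generically) together with carefully chosen twists $\omega_{C_R}(mC_i)$ to handle components on which $\omega$ is not ample. The classical properness of $\Hbar_g\to\Mbar_g$ is invoked only once, in \Cref{lem:not-poss-comp}, to rule out genus-$0$ components meeting the rest of the curve only in separating nodes. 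The point is that by working intrinsically on $C_R$ the paper never has to match two a priori different limits; your route would have to solve exactly that matching problem, and as written it does not.
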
 

\begin{remark}
	In the ($A_1$)-stable case, \Cref{theo:univ-closed} follows from the finiteness of the inertia, as $\overline{\mathcal{M}}_g$ is a Deligne-Mumford separated (in fact proper) stacks. However if $r\geq 2$, the inertia of $\Mtilde_g^r$ is not proper. As a matter of fact, $I_{\Mtilde_g^r}[2]$, i.e. the $2$-torsion of $I_{\Mtilde_g^r}$, is not finite over $\Mtilde_g^r$ either. It is clearly quasi-finite, but the properness fails as the following example shows. 
	
	Let $k=\CC$ and $\AA^1$ be the affine line over the complex number. We consider the following situation: consider the projective line $p_1:\PP^1\times \AA^1\rightarrow \AA^1$ over $\AA^1$ and the line bundle $p_2^*\cO_{\PP^1}(-4)$ (where $p_2:\PP^1\times \AA^1\rightarrow \PP^1$ is the natural projection). Every non-zero section $f\in p_{1,*}p_2^*\cO_{\PP^1}(8)\simeq \H^0(\PP^1,\cO(8)) \otimes_{\CC} \cO_{\AA^1}$ gives rise to a cyclic cover of 
	$$\begin{tikzcd}
	C_f\arrow[rd] \arrow[rr, "2:1"] &       & \PP^1\times\AA^1 \arrow[ld, "p_1"] \\
	& \AA^1 &                                   
	\end{tikzcd}$$
	 such that $C_f\rightarrow \AA^1$ is a family of (arithmetic) genus $3$ curves (see \cite{ArVis}). One can prove that they are all $A_r$-stable if $r\geq 8$. Furthermore, every automorphism of the data $(\PP^1\times \AA^1,\cO(-4),f)$ gives us an automorphism of $C$ which commutes with the cyclic cover map. We have already proven that the association is fully faithful and these are the only possible automorphisms. 
	 
	 Consider the section $f:=x_0^2x_1^2(x_0-x_1)^2(tx_0-x_1)^2 \in \H^0(\PP^1,\cO(8))\otimes \CC[t]$ where $[x_0:x_1]$ are the homogeneous coordinates of the projective line. Then it is easy to show that the family $C_f$ lives in $\Mtilde_3^3$. If $t\neq 0,1$, we have constructed a $A_1$-stable genus $3$ curve which can be obtained by gluing two projective line in four points with the same cross ratio, which is exactly $t$. Whereas if $t$ is either $0$ or $1$, we obtain a genus $3$ curve which is $A_3$-stable, as two of the four nodes in the generic case collapse into a tacnode. 
	 
	 Let $\phi_t$ be an element of $\PGL_2(\CC(t))$ defined by the matrix
	 $$
	 \begin{bmatrix}
	 0  &  1 \\
	 t  &  0
	 \end{bmatrix},
	 $$
	 hence an easy computation shows that $\phi_t$ is an involution of the data $$(\PP^1_{\CC(t)},\cO(-4),f)$$ and therefore of $C_f\otimes_{\AA^1}\spec \CC(t)$. First of all, this is not hyperelliptic: the quotient of $C_f\otimes_{\AA^1}\spec \CC(t)$ by this involution is a genus $1$ curve geometrically obtained by intersecting two projective lines in two points. Furthermore, it does not have a limit for $t=0$. This example shows us that we really need the hyperelliptic condition to lift an involution.
\end{remark}

First of all, we can reduce to considering morphisms $\spec K \rightarrow \Htilde_g^r$ which land in a dense open of $\Htilde_g^r$. Therefore \Cref{prop:smooth-hyp} implies that it is enough to prove the theorem when the family $C_R \rightarrow \spec R$ is generically smooth. In particular this implies $C_R$ is a $2$-dimensional normal scheme.

Using the normality of $C_R$ and the properness of the morphism $C_R \rightarrow \spec R$, one can prove that $\sigma_K$ can be uniquelly extended to an open $U$ of $C_R$ whose complement has $\codim 2$. Let us call $\sigma':C_R \dashrightarrow C_R$ the extension of $\sigma_K$ to the open $U$. 

\begin{lemma}\label{lem:contract}
	In the situation above, suppose that $\sigma'$ does not contract any one-dimensional subscheme of $C_R$ to a point. Then there exists an extension of $\sigma'$ to a regular morphism $\sigma_R:C_R\rightarrow C_R$ which is a hyperelliptic involution. 
\end{lemma}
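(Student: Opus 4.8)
First I would realise $\sigma'$ through the closure of its graph. Let $\Gamma\subset C_R\times_R C_R$ be the graph of $\sigma_K$, let $\overline\Gamma$ be its closure, and let $\nu\colon W\to\overline\Gamma$ be the normalization; write $p,q\colon W\to C_R$ for the two projections composed with $\nu$. Over the open set $U$ on which $\sigma'$ is already a morphism, $\overline\Gamma$ coincides with the graph of $\sigma'|_U$, which maps isomorphically to $U$ and is normal, so $p$ is proper and birational and an isomorphism over $U$, and $q$ agrees with $\sigma'$ there. Since $\sigma_K$ is an involution, $\overline\Gamma$ is symmetric under the interchange of the two factors, which by the universal property of normalization gives an involution $\iota$ of $W$ with $p\circ\iota=q$ and $q\circ\iota=p$. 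As $C_R\setminus U$ is a finite set of closed points (it has codimension $2$ in the surface $C_R$) and $p$ is an isomorphism over $U$, the only fibres of $p$ which may be one-dimensional lie over $C_R\setminus U$.

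The heart of the proof is to show that $\widetilde\sigma:=q$ is constant along every fibre of $p$; granting this, $p$ proper birational and $C_R$ normal force (via Zariski's main theorem applied to $\pr_2$ restricted to the image of $(\widetilde\sigma,p)\colon W\to C_R\times_R C_R$, or equivalently via the rigidity lemma) a unique factorization $\widetilde\sigma=\sigma_R\circ p$ with $\sigma_R\colon C_R\to C_R$ an $R$-morphism extending $\sigma'$, hence $\sigma_K$. To prove the claim, suppose some fibre $p^{-1}(x_0)$, $x_0\in C_R\setminus U$, contains an irreducible curve $E$ with $D:=\widetilde\sigma(E)=q(E)$ one-dimensional. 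Then $\iota(E)$ satisfies $p(\iota(E))=q(E)=D$ and $q(\iota(E))=p(E)=\{x_0\}$, so $\iota(E)$ is not contracted by $p$, it dominates the curve $D$, and it is contracted by $\widetilde\sigma$ to $x_0$. Now $D\cap U$ is dense in $D$ (as $C_R\setminus U$ is finite), $p$ restricts to an isomorphism over $U$, and $\iota(E)\cap p^{-1}(U)$ dominates a dense open $D_0\subset D\cap U$; for $d\in D_0$ the unique preimage $p^{-1}(d)$ lies on $\iota(E)$, whence $\sigma'(d)=\widetilde\sigma(p^{-1}(d))=x_0$. Thus $\sigma'$ is constant on a dense subset of the integral curve $D\cap U$, so it contracts this one-dimensional subscheme of $C_R$ to a point, contradicting the hypothesis. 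Hence $\widetilde\sigma$ is constant along every fibre of $p$.

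It remains to check that the morphism $\sigma_R$ so obtained is a hyperelliptic involution of $C_R/R$. It is an involution: $\sigma_R\circ\sigma_R$ and $\id$ agree on the scheme-theoretically dense generic fibre (where both equal $\sigma_K\circ\sigma_K=\id$), and $C_R$ is reduced and separated. It is a morphism over $R$ by construction. Finally, $(C_R\to\spec R,\sigma_R)$ satisfies the hypotheses of \Cref{lem:conn-comp}, so there is an open and closed $S'\subseteq\spec R$ whose points are exactly those over which the geometric fibre is a hyperelliptic $A_r$-stable curve of genus $g$; $S'$ contains the generic point and $\spec R$ is connected, so $S'=\spec R$, and therefore $(C_R,\sigma_R)$ is a hyperelliptic $A_r$-stable curve over $R$ restricting to $\sigma_K$ on the generic fibre. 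This is the asserted extension (and it is unique, again by separatedness together with the uniqueness of the hyperelliptic involution on the generic fibre).

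I expect the delicate point to be the contraction argument in the second paragraph: because $C_R$ is only normal and $\sigma'$ is a priori merely a rational self-map, one must be careful to extract from ``a fibre of $p$ is not contracted by $\widetilde\sigma$'' a genuine one-dimensional subscheme of $C_R$ on which the partially defined map $\sigma'$ is constant, so that the no-contraction hypothesis genuinely applies. The other ingredients---normality and Zariski's main theorem for surfaces, and \Cref{lem:conn-comp}---are already in hand.
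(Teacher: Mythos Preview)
Your argument is correct, and the overall strategy---realize $\sigma'$ via the closure of its graph and exploit the symmetry coming from $\sigma_K^2=\id$---matches the paper's. The paper, however, executes this more directly and avoids several of your auxiliary constructions. Rather than normalizing $\overline\Gamma$, introducing the involution $\iota$ on $W$, and then invoking rigidity or Stein factorization to descend $q$ along $p$, the paper simply shows that the \emph{second} projection $p_2\colon\Theta\to C_R$ from the graph closure $\Theta$ is an isomorphism. Indeed, if a curve $\Gamma\subset\Theta$ were contracted by $p_2$, then $p_1(\Gamma)$ would be one-dimensional (since $\Theta\subset C_R\times_R C_R$ and both projections cannot simultaneously contract a curve), and $\sigma'$ would contract $p_1(\Gamma)$, contradicting the hypothesis. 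Hence $p_2$ is quasi-finite, and being proper it is finite; being finite birational onto the normal scheme $C_R$, it is an isomorphism. Then $p_1\circ p_2^{-1}$ is a regular extension of $\sigma'^{-1}$, hence of $\sigma'$ (as they agree on the generic fibre).

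What your approach buys is a slightly more explicit bookkeeping of where the contraction happens, at the cost of the normalization step and the descent argument. The paper's route is shorter because it sidesteps the question of whether $q$ factors through $p$ entirely: once $p_2$ is an isomorphism there is nothing to descend. Your concern in the last paragraph about extracting a genuine one-dimensional subscheme on which $\sigma'$ is constant is handled in the paper by the same observation you make---any curve in $\Theta$ with one-dimensional $p_1$-image meets $p_1^{-1}(U)$ densely, since $C_R\setminus U$ is finite---so this point is not more delicate in either version. Your invocation of \Cref{lem:conn-comp} for the hyperelliptic property is equivalent to the paper's use of \Cref{prop:open-closed-imm}.
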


\begin{proof}
	Let $U\subset C_R$ be the open subscheme where $\sigma'$ is defined and let $\Theta\subset C_R\times_{\spec R} C_R$ be the closure of the graph $U\hookrightarrow C_R\times_{\spec R} C_R$ associated to $\sigma'$. We denote by $p_i$ the restriction of the $i$-th projection $C_R\times_{\spec R} C_R \rightarrow C_R$ to $\Theta$ for $i=1,2$. Suppose there exists a one-dimensional irreducible scheme $\Gamma\subset \Theta$ such that $p_2(\Gamma)$ is just a point, then $p_1(\Gamma)$ has to be one-dimensional but this would imply that $\sigma'$ contracts it. Therefore $p_2$ is quasi-finite and proper, thus finite. We have then a finite birational morphism to a normal variety, therefore it is an isomorphism. Hence we have that $\sigma'^{-1}$ can be defined over $C_R$, i.e. it is a regular morphism. We denote it by $\widetilde{\sigma}$. Because both $\widetilde{\sigma}$ and $\sigma'$ restricts to $\sigma_K$ at the generic fiber, we have that $\widetilde{\sigma}$ is in fact an extension of $\sigma'$ to a regular morphism and it is an involution. The hyperelliptic property follows from \Cref{prop:open-closed-imm}.
\end{proof}

The previous lemma implies that it is enough to prove that $\sigma'$ does not contract any one-dimensional subscheme of $C_R$. To do this, we use a classical but fundamental fact for smooth hyperelliptic curves. 

\begin{lemma}\label{lem:can-com}
	Let $C$ be a smooth hyperelliptic curve of genus $g$ over an algebraically closed field. Then the canonical morphism $$\phi_{|\omega_C|}:C\longrightarrow \PP^{g-1}$$
	factors through the hyperelliptic quotient.
\end{lemma}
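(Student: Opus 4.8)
The plan is to identify the canonical bundle $\omega_C$ with the pullback along the hyperelliptic double cover $f\colon C\to Z$ of a line bundle of degree $g-1$ on $Z\simeq\PP^1$, and to check that every canonical section is pulled back from $Z$; the factorization then follows because the complete linear system of $\cO_{\PP^1}(g-1)$ on $\PP^1$ is the $(g-1)$-uple Veronese embedding $i_{g-1}\colon\PP^1\into\PP^{g-1}$, so that the canonical morphism is $i_{g-1}\circ f$. I will assume $g\geq 2$, so that the target $\PP^{g-1}$ and the degree $g-1$ embedding make sense (for $g=2$ the canonical morphism is literally $f$ and there is nothing to prove).

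First I would record the structure of the cover. Since $2$ is invertible and $f$ is finite flat of degree $2$, the sheaf $f_*\cO_C$ splits as $\cO_{\PP^1}\oplus\cL^{-1}$ for a line bundle $\cL$ on $\PP^1$. Comparing Euler characteristics, $1-g=\chi(\cO_C)=\chi(f_*\cO_C)=1+\bigl(1-\deg\cL\bigr)$, which forces $\deg\cL=g+1$; thus $f_*\cO_C\simeq\cO_{\PP^1}\oplus\cO_{\PP^1}(-g-1)$, exactly as in the proof of \Cref{prop:integral}.

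Next I would identify $\omega_C$. Put $\cM:=f^*\cO_{\PP^1}(g-1)$, a line bundle of degree $2(g-1)$ on $C$. By the projection formula and the splitting above, $\H^0(C,\cM)\simeq\H^0(\PP^1,\cO_{\PP^1}(g-1))\oplus\H^0(\PP^1,\cO_{\PP^1}(-2))$, so $h^0(C,\cM)=g$. Riemann--Roch on the smooth integral curve $C$ then gives $h^0(C,\omega_C\otimes\cM^{-1})=h^0(C,\cM)-\deg\cM-1+g=1$ and $\deg(\omega_C\otimes\cM^{-1})=(2g-2)-2(g-1)=0$; a degree-zero line bundle admitting a nonzero global section on an integral curve is trivial, hence $\omega_C\simeq\cM=f^*\cO_{\PP^1}(g-1)$.

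It remains to assemble the factorization. Under this isomorphism the projection formula identifies $f^*\colon\H^0(\PP^1,\cO_{\PP^1}(g-1))\to\H^0(C,\omega_C)$ with an isomorphism, the complementary summand $\H^0(\PP^1,\cO_{\PP^1}(-2))$ being zero. Therefore every section of $\omega_C$ is pulled back from $Z\simeq\PP^1$, and the canonical morphism $\phi_{|\omega_C|}$ coincides with the composite of $f$ with the morphism $\PP^1\to\PP^{g-1}$ attached to the complete linear system $|\cO_{\PP^1}(g-1)|$, that is, the $(g-1)$-uple Veronese embedding $i_{g-1}$. Hence $\phi_{|\omega_C|}=i_{g-1}\circ f$ factors through the hyperelliptic quotient $f\colon C\to Z$, as claimed. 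There is no real obstacle here: the argument is a short computation with the double cover, the only substantive inputs being Riemann--Roch and the triviality of a degree-zero effective class, both valid on the smooth curve $C$; the only point requiring a little care is the bookkeeping of $f_*\cO_C$, which is handled once and for all in the second step.
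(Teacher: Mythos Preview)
Your proof is correct. The paper does not actually supply a proof of this lemma: it is introduced as ``a classical but fundamental fact for smooth hyperelliptic curves'' and stated without argument. Your computation is precisely the one the paper carries out in the proof of \Cref{prop:integral} (there for possibly singular integral $C$, here specialized to smooth $C$): identify $f_*\cO_C\simeq\cO_{\PP^1}\oplus\cO_{\PP^1}(-g-1)$, use the projection formula to compute $h^0(C,f^*\cO_{\PP^1}(g-1))=g$, and conclude via Riemann--Roch that $\omega_C\simeq f^*\cO_{\PP^1}(g-1)$, whence the canonical system is pulled back from $\PP^1$.
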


Before continuing with the proof of \Cref{theo:univ-closed}, we recall why the genus $2$ case is special.

\begin{remark}	\label{rem:genus-2}
	If $g=2$, we need to prove that $\eta$ is an isomorphism.  In this situation, given a family $\pi:C\rightarrow S$ of $A_r$-stable curves, there is always a hyperelliptic involution. Indeed, one can consider the morphism $\phi_{|\omega_C^{\otimes 2}|}:C \rightarrow \PP(\pi_*\omega_C^{\otimes 2}) $. A straightforward computation proves that $\omega_C^{\otimes 2}$ is globally generated and the image $Z$ of the morphism associated to $\omega_C^{\otimes 2}$ is a family of curves of genus $0$. In fact we have a factorization of the morphism $\phi_{|\omega_C^{\otimes 2}|}$ through its image
	$$
	\begin{tikzcd}
	C \arrow[rd,"\pi"]  \arrow[r, "p"] & Z \arrow[d] \arrow[r, hook] & \PP(\pi_*\omega_C^{\otimes 2}) \arrow[ld]   \\
		& S              &                               
		\end{tikzcd}$$
	and one can construct an involution $\sigma$ of $C$ such that the quotient morphism is exactly $p$. 
	
	The same strategy cannot work for higher genus, as we know that $\eta$ is not surjective.  
\end{remark}

The idea is to use the canonical map to prove that the involution $\sigma'$ does not contract any one-dimensional subscheme of $C_R$, or equivalently any irreducible component of the special fiber $C_k:=C\otimes_R k$, where $k$ is the residue field of $R$. Indeed, $\sigma'$ commutes with the canonical map on a dense open, namely the generic fiber, because of \Cref{lem:can-com}. Reducedness and separatedness of $C_R$ imply that they commute wherever they are both defined. Let $\sigma'_k$ the restriction of $\sigma'$ to the special fiber and let $\Gamma$ be an irreducible component of $C_k$. Suppose we have the two following properties:
\begin{itemize}
	\item[1)] the open of definition of the canonical morphism of $C_k$ intersects $\Gamma$,
	\item[2)] the canonical morphism of $C_k$ does not contract $\Gamma$ to a point;
\end{itemize}
thus $\sigma'_k$ does not contract $\Gamma$ to a point, and neither does $\sigma'$. 

In the rest of the section, we describe the base point locus of $|\omega_C|$ and we prove that the canonical morphism contracts only a specific type of irreducible components. Then, we prove that $\sigma'$ does not contract these particular components using a variation of the canonical morphism. Thus we can apply \Cref{lem:contract} to get \Cref{theo:univ-closed}.

\begin{proposition}\label{prop:base-point-can}		
	Let $C/k$ be an $A_r$-stable genus $g$ curve over an algebraically closed field. The canonical map is defined on the complement of the set ${\rm SN}(C)$, which contains two type of closed points, namely:
	\begin{itemize}
		\item[$(1)$]   $p$ is a separating nodal point;
		\item[$(2)$]   $p$ belongs to an irreducible component of arithmetic genus $0$ which intersect the rest of curve in separating nodes.
	\end{itemize} 
\end{proposition}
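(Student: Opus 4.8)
The plan is to reduce the proposition to the statement that $\omega_C$ is globally generated away from ${\rm SN}(C)$, recast global generation at a point via Serre duality as a triviality statement for a single auxiliary sheaf, and then evaluate that sheaf point by point using the local classification of Section~\ref{sec:1}. \emph{Step 1 (reduction).} The morphism $\phi_{|\omega_C|}$ is defined at $p$ exactly when $\omega_C$ is globally generated at $p$. Let $\cI_p$ be the ideal sheaf of the reduced point $p$; tensoring $0\to\cI_p\to\cO_C\to k(p)\to 0$ by the line bundle $\omega_C$ shows global generation at $p$ is equivalent to $\H^1(\omega_C\otimes\cI_p)\to\H^1(\omega_C)$ being injective. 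Since $C$ is Gorenstein and $\omega_C\otimes\cI_p$ is torsion-free (hence maximal Cohen--Macaulay), Serre duality identifies this with: the natural inclusion $\H^0(\cO_C)\hookrightarrow\H^0(\curshom(\cI_p,\cO_C))$ is surjective. Applying $\curshom(-,\cO_C)$ to the same sequence and using that $\cO_{C,p}$ is Gorenstein of dimension $1$, so $\mathcal{E}\mathit{xt}^1(k(p),\cO_C)\cong k(p)$, gives $0\to\cO_C\to\curshom(\cI_p,\cO_C)\to k(p)\to 0$. Hence $h^0(\curshom(\cI_p,\cO_C))\in\{1,2\}$, and $\phi_{|\omega_C|}$ is defined at $p$ iff $h^0(\curshom(\cI_p,\cO_C))=1$.

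\emph{Step 2 (singular points).} If $p$ is singular, $\mathfrak m_p$ is non-invertible, so $\curshom(\cI_p,\cO_C)_p=\mathfrak m_p^{-1}=(\mathfrak m_p:\mathfrak m_p)$ is a ring, and $\curshom(\cI_p,\cO_C)=\nu'_*\cO_{C'}$ for a finite birational $\nu':C'\to C$ which is an isomorphism over $C\setminus\{p\}$ and has $\operatorname{length}_p(\nu'_*\cO_{C'}/\cO_C)=1$. By the local pictures of Section~\ref{sec:1}, this $\nu'$ turns an $A_h$-point into an $A_{h-2}$-point (the two branches of a node being pulled apart only at the last step $A_1\to A_{-1}$); so $C'$ is connected unless $p$ is a node whose removal disconnects $C$, i.e. a separating node, in which case $C'$ has two components. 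Therefore $h^0(\curshom(\cI_p,\cO_C))=\#\pi_0(C')=1$ unless $p$ is a separating node, i.e. of type $(1)$.

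\emph{Step 3 (smooth points).} If $p$ is smooth then $\curshom(\cI_p,\cO_C)=\cO_C(p)$; assume $h^0(\cO_C(p))\geq 2$ and take a non-constant $\varphi\in\H^0(\cO_C(p))$. On any component not meeting $p$, $\varphi$ has no pole, hence is constant, so $\varphi$ is non-constant only on the component $\Gamma\ni p$, where it is a degree-$1$ morphism $\Gamma\to\PP^1$, forcing $\Gamma\cong\PP^1$. Being injective, $\varphi|_\Gamma$ takes pairwise distinct finite values at the points $q$ where $\Gamma$ meets $\overline{C-\Gamma}$, while $\varphi$ is constant on each connected component of $\overline{C-\Gamma}$; comparing values forces each such connected component to meet $\Gamma$ in a single point $q$. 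Each $q$ lies on two components, so is an $A_{2\ell-1}$-point; writing $\varphi$ in the normalization as a pair of series agreeing to order $<\ell$, with the second entry constant, and using that $\varphi|_\Gamma-\varphi(q)$ vanishes to order exactly $1$ at $q$, we get $\ell=1$, i.e. $q$ is a node, and its removal disconnects $C$, so $q$ is separating. Stability gives $\deg(\omega_C|_\Gamma)>0$; since $\omega_C|_\Gamma\cong\cO_{\PP^1}(k-2)$ with $k$ the number of such nodes, $k\geq 3$. Thus $\Gamma$ is a genus-$0$ component meeting the rest of $C$ only in (at least three) separating nodes, so $p\in{\rm SN}(C)$ of type $(2)$.

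Combining the steps: if $p\notin{\rm SN}(C)$ then $h^0(\curshom(\cI_p,\cO_C))=1$, so $\omega_C$ is globally generated at $p$ and $\phi_{|\omega_C|}$ is defined there, proving the proposition. (Running the same computations in reverse shows each point of ${\rm SN}(C)$ is genuinely a base point, so ${\rm SN}(C)$ is exactly the base locus; only the inclusion just proved is needed.) The only delicate part is Step~3 --- the combinatorics ruling out attachments along non-separating nodes or $A_{\geq 3}$-singularities, together with the three-node bound from stability; Steps~1 and~2 are formal once one recalls $\mathcal{E}\mathit{xt}^1(k(p),\cO_C)\cong k(p)$ and the classification of Section~\ref{sec:1}.
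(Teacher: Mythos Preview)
Your argument is correct. The paper does not actually prove this proposition: it simply cites Theorem~D of Catanese \cite{Cat} and moves on. You instead give a self-contained proof, so the approaches are genuinely different.

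A few remarks comparing the two. Your Step~1 reduction via Serre duality to the single invariant $h^0(\curshom(\cI_p,\cO_C))$ is clean and lets you treat smooth and singular points uniformly. Step~2, recognising $\mathfrak m_p^{-1}=(\mathfrak m_p:\mathfrak m_p)$ as the structure sheaf of a partial desingularisation $C'$ with $\delta$-invariant~$1$ (hence $A_h\rightsquigarrow A_{h-2}$), is exactly the kind of local computation the paper's Section~\ref{sec:1} sets up but never carries out for this purpose; the conclusion $h^0=\#\pi_0(C')$ then isolates separating nodes immediately. Your Step~3 is essentially a more detailed version of the paper's own Lemma~\ref{lem:char-type2}, which is stated and proved \emph{after} Proposition~\ref{prop:base-point-can}; in particular you make explicit the reason the attachment points $q$ must be honest nodes rather than $A_{2\ell-1}$ with $\ell\geq 2$ (the order-of-vanishing argument forcing $\ell=1$), a point the paper's proof of Lemma~\ref{lem:char-type2} glosses over. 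So your argument both replaces the external citation and subsumes part of the subsequent lemma. The trade-off is that Catanese's result is more general (arbitrary Gorenstein curves) and gives finer information about the canonical map, whereas your proof is tailored to the $A_r$ setting and to the precise statement needed here.
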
 
\begin{proof}
	Follows from Theorem D of \cite{Cat}.
\end{proof}

\Cref{prop:base-point-can} implies that there may be irreducible components of $C_k$ where the canonical map is not defined. We prove in \Cref{lem:not-poss-comp} that in fact they cannot occur in our situation.

Now we prove two lemmas which partially imply the previous one but we need them in this particular form. The first one is a characterization of the points of type (2) as in \Cref{prop:base-point-can}.

\begin{lemma}\label{lem:char-type2}
	Let $C$ be an $A_r$-prestable curve of genus $g\geq 1$ over $k$ and $p\in C$ be a smooth point. Then $\H^0(C,\cO(p))=2$ if and only if $p$ is of type $(2)$ as in \Cref{prop:base-point-can}.
\end{lemma}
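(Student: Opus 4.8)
The plan is to analyze the exact sequence relating $\cO_C$ and $\cO_C(p)$ and compute $\H^0$ via $\H^1$. Since $p$ is a smooth point, twisting by the effective Cartier divisor $p$ gives a short exact sequence $0\to\cO_C\to\cO_C(p)\to k(p)\to 0$, hence $h^0(C,\cO(p))\leq h^0(C,\cO_C)+1$. As $C$ is connected (an $A_r$-prestable curve is connected by our conventions), $h^0(C,\cO_C)=1$, so $h^0(C,\cO(p))$ is either $1$ or $2$, and it equals $2$ precisely when the connecting map $k(p)\to\H^1(C,\cO_C)$ vanishes, equivalently when the evaluation map $\H^0(C,\cO(p))\to k(p)$ is surjective, equivalently when there is a nonconstant global section of $\cO_C(p)$. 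So the statement to prove is: $\cO_C(p)$ admits a nonconstant global section if and only if $p$ lies on a genus $0$ component $\Gamma$ meeting $C-\Gamma$ only in separating nodes.

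For the ``if'' direction, suppose $p\in\Gamma$ with $\Gamma\simeq\PP^1$ a component meeting the rest of $C$ in separating nodes $q_1,\dots,q_s$. I would construct the section explicitly: on $\Gamma$, take a section of $\cO_{\PP^1}(1)$ vanishing at some point other than $p$ (so it is nonconstant on $\Gamma$ and regular, with at worst a pole allowed at $p$); on every other component, since each node $q_j$ is separating, one can extend by a constant on the connected subcurve attached there (matching the value of the $\Gamma$-section at $q_j$). Because the $q_j$ are separating nodes, the dual graph is a tree away from $\Gamma$, so one can propagate a locally constant choice on $C-\Gamma$ compatible with the prescribed values at the $q_j$; this is exactly the kind of gluing argument used in \Cref{lem:hyp-line-bun}. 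The result is a nonconstant section of $\cO_C(p)$, giving $h^0=2$.

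For the ``only if'' direction, suppose $s\in\H^0(C,\cO(p))$ is nonconstant. Then $s$ defines a morphism $f:C\to\PP^1$ of degree $1$ (as $\cO(p)$ has total degree $1$ and $s$ has a single simple pole), so $f$ is birational; since $\PP^1$ is normal and $f$ is finite of degree $1$ onto its image... — more carefully, $s$ gives a map whose associated linear system has degree $1$, so on the component $\Gamma_p$ containing $p$ the restriction $s|_{\Gamma_p}$ must be nonconstant (it has the only pole), forcing $\deg\cO(p)|_{\Gamma_p}=1$ and $\Gamma_p\simeq\PP^1$, while on every other component the section has no pole, hence (the component being proper) is constant. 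For the section to glue across a node $q$ on the boundary of $\Gamma_p$, I examine the local picture: at a non-separating node the two branches belong to the same connected component of $C-\{q\}$, and one shows a globally defined section cannot take independent values on the two branches while being nonconstant only near $p$; more precisely, if $\Gamma_p$ met a second positive-genus-or-multiply-connected part in a non-separating way, the section being constant there and nonconstant on $\Gamma_p$ would be incompatible at the node. So all nodes on $\Gamma_p$ are separating, $\Gamma_p$ has genus $0$, and $p$ is of type $(2)$. The main obstacle is making the ``only if'' gluing/combinatorial step fully rigorous — precisely, ruling out that a nonconstant section exists when $\Gamma_p$ is attached non-separatingly or has positive genus — which I would handle by the same Snake-lemma / tree-of-components bookkeeping as in \Cref{lem:hyp-line-bun}, combined with \Cref{rem:genus-count} to track how genus distributes across the (partial) normalization at the nodes of $\Gamma_p$.
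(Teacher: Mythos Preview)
Your approach is essentially the paper's, and your ``if'' direction is fine (indeed more explicit than the paper's, which just calls it an easy computation; note though that you do not need ``the dual graph is a tree away from $\Gamma$'' --- all you use is that $C-\Gamma$ breaks into connected components $D_j$, on each of which you set the section to the constant matching the value at the unique attaching node $q_j$).

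The gap is in your ``only if'' direction, precisely at the step you flag as the main obstacle. The paper dispatches it cleanly and you should too: once you know $\Gamma_p\simeq\PP^1$ and that every global section of $\cO_C(p)$ restricts to a constant on each connected subcurve $C'$ of $C-\Gamma_p$, the section on $\Gamma_p$ (a section of $\cO_{\PP^1}(1)$) must agree with that constant along the \emph{scheme-theoretic} intersection $C'\cap\Gamma_p$. If that intersection has length $\geq 2$, a section of $\cO_{\PP^1}(1)$ agreeing with a constant on a length-$2$ subscheme is itself constant, so the global section is constant --- contradicting $h^0=2$. Hence $\ell(C'\cap\Gamma_p)\leq 1$ for every connected component $C'$ of $C-\Gamma_p$, which is exactly the condition that $\Gamma_p$ meets the rest of $C$ only in separating \emph{nodes}. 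This length formulation is what your node-by-node discussion misses: it automatically excludes higher $A_r$-attachments (a tacnode gives intersection length $2$, an $A_{2h-1}$ gives length $h$), which you never address. Finally, drop the opening detour through ``$f:C\to\PP^1$ of degree $1$, birational, finite'': for $g\geq 1$ this is simply false, as you half-recognize; go straight to the componentwise analysis.
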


\begin{proof}
	The \emph{if} part follows from an easy computation. Let us prove the \emph{only if} implication. Let $\Gamma$ be the irreducible component that contains $p$. Then $h^0(\Gamma,\cO(p))\geq h^0(C,\cO(p))$, which implies $h^0(\Gamma,\cO(p))=2$ or equivalently $\Gamma$ is a projective line. Because $\cO(p)\vert_{\tilde{\Gamma}}\simeq \cO_{\tilde{\Gamma}}$ for every irreducible component $\tilde{\Gamma}$ different from $\Gamma$, we have that $h^0(C,\cO(p))=2$ implies that for every connected subcurve $C'\subset C$ not cointaining $\Gamma$, we have that $C'\cap \Gamma$ is a $0$-dimensional subscheme of length at most $1$. This is equivalent to the fact that $\Gamma$ intersects $C-\Gamma$ only in separating nodes. 
\end{proof}

The second lemma helps us describe the canonical map of $C$ when restricted to its $A_1$-separating decomposition.

\begin{lemma}\label{lem:sep-decom-hodge}
Let $C/k$ be an $A_r$-stable curve of genus $g$ and let $\{C_i\}_{i \in I}$ its $A_1$-separating decomposition (see \Cref{def:sep-dec}). Then we have that 
$$\H^0(C,\omega_C)=\bigoplus_{i \in I}\H^0(C_i,\omega_{C_i}).$$	
\end{lemma}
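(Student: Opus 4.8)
The plan is to decompose the canonical sheaf along the $A_1$-separating decomposition by an inductive Mayer–Vietoris argument, peeling off one separating node at a time. First I would recall that the whole point of an $A_1$-separating decomposition $\{C_i\}_{i\in I}$ is that two distinct pieces meet (if at all) in a single separating node $n$ of $C$, and that by \Cref{rem:genus-count} the partial normalization $b:\widetilde{C}\to C$ at such a node satisfies $b^*\omega_C\simeq \omega_{\widetilde{C}}(J_b^\vee)$; moreover the total genus is additive across a separating node, $g(C)=g(C')+g(C'')$ when $C=C'\cup_n C''$. The key local fact is that near a separating node $\omega_C$ restricts on each branch to $\omega$ of the branch twisted up by the preimage of $n$, i.e.\ a section of $\omega_C$ is the same datum as a pair of sections of $\omega_{C_i}(n_i)$, one on each side, with matching residues at $n$.

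Concretely, I would induct on $|I|$. If $|I|=1$ there is nothing to prove. Otherwise pick a separating node $n$ realizing $C=C'\cup_n C''$ with $C'$ one of the $C_i$ and $C''$ the union of the rest (or any splitting); write $\iota':C'\hookrightarrow C$, $\iota'':C''\hookrightarrow C$ the closed immersions. The normalization sequence at $n$ reads
\[
0\longrightarrow \cO_C\longrightarrow \iota'_*\cO_{C'}\oplus \iota''_*\cO_{C''}\longrightarrow k(n)\longrightarrow 0.
\]
Tensoring with the line bundle $\omega_C$ (it is a line bundle since $C$ is l.c.i.) and using $\iota'^*\omega_C\simeq \omega_{C'}(n')$, $\iota''^*\omega_C\simeq \omega_{C''}(n'')$ — here $n',n''$ are the two points of $\widetilde C$ over $n$ — we get
\[
0\longrightarrow \omega_C\longrightarrow \iota'_*\omega_{C'}(n')\oplus \iota''_*\omega_{C''}(n'')\longrightarrow k(n)\longrightarrow 0.
\]
Taking global sections, it suffices to show the boundary map $\H^0\bigl(\omega_{C'}(n')\bigr)\oplus\H^0\bigl(\omega_{C''}(n'')\bigr)\to k(n)$ is surjective with kernel $\H^0(\omega_{C'})\oplus\H^0(\omega_{C''})$; then $\H^0(C,\omega_C)=\H^0(C',\omega_{C'})\oplus\H^0(C'',\omega_{C''})$ and the inductive hypothesis applied to $C''$ finishes the claim. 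Surjectivity is the assertion that on at least one side, say $C'$, there is a section of $\omega_{C'}(n')$ with a nonzero residue at $n'$: this is residue theory on $C'$, and it holds as long as $C'$ is not a curve on which $\omega_{C'}$ already has a section realizing the residue — but in fact on any Gorenstein curve $\H^0(\omega_{C'}(n'))/\H^0(\omega_{C'})$ injects into $k(n')$ and the composite $\H^0(\omega_{C'}(n'))\to k(n')\to k(n)$ hits the residue line because $h^0(\omega_{C'}(n'))=h^0(\omega_{C'})+1$ by Riemann–Roch (the point $n'$ is not a base point of $|\omega_{C'}(n')|$ since $\deg(\omega_{C'}(n'))>\deg\omega_{C'}$ on the component through $n'$ and one checks $h^1(\omega_{C'}(n'))=h^1(\cO_{C'})-$ nothing drops). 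So the residue at $n$ is nonzero from the $C'$ side, giving surjectivity; the kernel is exactly the pairs with zero residue on both sides, which are precisely $\H^0(\omega_{C'})\oplus\H^0(\omega_{C''})$.

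The main obstacle I anticipate is the bookkeeping in the surjectivity/exactness step: one must be careful that a section of $\omega_{C'}(n')$ with a pole at $n'$ genuinely exists (equivalently $n'$ is not a base point of the adjoint series), which is where the hypothesis that we are splitting at a \emph{separating} node and that each $C_i$ has no further separating nodes gets used — it guarantees $C'$ and $C''$ are "honest" pieces so that the residue computation is the classical one and no degenerate vanishing occurs. An alternative, possibly cleaner, route is to argue directly via Serre duality: $\H^0(C,\omega_C)=\H^1(C,\cO_C)^\vee$, and the normalization sequence gives $\H^1(C,\cO_C)=\bigoplus_i\H^1(C_i,\cO_{C_i})$ precisely because the connecting map $\bigoplus_i\H^0(\cO_{C_i})\to\bigoplus_{n}k(n)$ is surjective (the dual graph modulo the $C_i$'s is a tree, so $H^1$ of that complex vanishes), whence dualizing yields the stated splitting of $\H^0(\omega_C)$; I would likely present this dual version as the proof since it sidesteps the residue bookkeeping entirely.
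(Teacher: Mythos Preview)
Your inductive setup and the short exact sequence
\[
0\longrightarrow \omega_C\longrightarrow \iota'_*\omega_{C'}(n')\oplus \iota''_*\omega_{C''}(n'')\longrightarrow k(n)\longrightarrow 0
\]
are exactly what the paper uses. The gap is in your analysis of the long exact sequence on global sections. You assert that the difference map $\H^0(\omega_{C'}(n'))\oplus\H^0(\omega_{C''}(n''))\to k(n)$ is surjective because $h^0(\omega_{C'}(n'))=h^0(\omega_{C'})+1$. This Riemann--Roch computation is wrong: on a connected reduced Gorenstein curve $C'$ of genus $g'$ one has $h^1(\omega_{C'}(n'))=h^0(\cO_{C'}(-n'))=0$, so $h^0(\omega_{C'}(n'))=(2g'-1)+1-g'=g'=h^0(\omega_{C'})$. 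There is \emph{no} section of $\omega_{C'}(n')$ with a genuine pole at $n'$; the residue map is identically zero, not surjective.

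The paper's argument runs the same sequence but draws the opposite (correct) conclusion: since $\H^1(\omega_{C_i}(n))=0$ on each side and $h^1(\omega_C)=1$, the connecting map $k(n)\to\H^1(\omega_C)$ is an isomorphism, hence the difference map to $k(n)$ is zero and $\H^0(\omega_C)=\H^0(\omega_{C_1}(n))\oplus\H^0(\omega_{C_2}(n))$; then one checks separately that $\H^0(\omega_{C_i}(n))=\H^0(\omega_{C_i})$, which is exactly the vanishing $\H^0(\cO_{C_i}(-n))=0$. So your identification of the kernel with $\H^0(\omega_{C'})\oplus\H^0(\omega_{C''})$ is right, but because the two spaces coincide, not because one is cut out by a nontrivial residue condition. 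Your alternative route via Serre duality and the tree structure of the dual graph is correct and would have avoided this slip.
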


\begin{proof}
	Let $n \in C$ be a separating nodal point and let $C_1$ and $C_2$ the two subcurves of $C$ such that $C=C_1\cup C_2$ and $C_1\cap C_2=\spec k(n)$. Then we have the exact sequence of coherent sheaves
	$$ 0 \rightarrow \omega_C \rightarrow \iota_{1,*}\omega_{C_1}(n)\oplus \iota_{2,*}\omega_{C_2}(n) \rightarrow k(n) \rightarrow 0$$
	where $\iota_i:C_i \hookrightarrow C$ is the natural immersion for $i=1,2$. Taking the global sections, $h^1(\omega_C)=1$ implies that $\H^0(C,\omega_C)=\H^0(C_1,\omega_{C_1}(n))\oplus\H^0(C_2,\omega_{C_2}(n))$. Finally, we claim that $\H^0(C,\omega_C(p))=\H^0(C,\omega_C)$ for every smooth point $p$ on a connected reduced Gorenstein curve $C$. The global sections of the following exact sequence 
	$$0 \rightarrow \cO_C(-p) \rightarrow \cO_C \rightarrow k(p) \rightarrow 0$$
	implies that the claim is equivalent to the vanishing of $\H^0(C,\cO_C(-p))$. This is an easy exercise.
\end{proof}
Let $\{C_i\}_{i \in I}$ be the $A_1$-separating decomposition of $C$ and $g_i$ be the genus of $C_i$. We will see in \Cref{lem:not-poss-comp} that there are no points of type $(2)$ as in \Cref{prop:base-point-can}. Therefore from now on, we suppose that there are no such points and in particular, $g_i>0$ for every $i \in I$.
The previous result implies that the composite 
$$
\begin{tikzcd}
C_i \arrow[r, "\iota_i", hook] & C \arrow[r, "\phi_{|\omega_C|}", dashed] & \PP^{g-1}
\end{tikzcd}
$$
factors through a map $f:C_i\dashrightarrow \PP^{g_i-1}$ induced by the vector space $\H^0(C_i,\omega_{C_i})$ inside the complete linear system of $\omega_{C_i}(\Sigma_i)$, where $\Sigma_i$ is the Cartier divisor on $C_i$ defined by the intersection of $C_i$ with the curve $C-C_i$, or equivalently the restriction of the separating nodal points on $C_i$. This follows because the restriction
$$ \H^0(C,\omega_C)  \longrightarrow \H^0(C_i,\omega_{C_i})$$
is surjective as long as there are no points of type $(2)$ as in \Cref{prop:base-point-can}  on $C_i$. Notice that the rational map $f$ is defined on the open $C_i\setminus \Sigma_i$, and we have that $f\vert_{C_i\setminus \Sigma_i}\equiv \phi_{|\omega_{C_i}|}\vert_{C_i\setminus \Sigma_i}$.

We have two cases to analyze: $C_i$ is either an $A_r$-stable genus $g_i$ curve or only $A_r$-prestable. In the first case, $C_i$ is an $A_r$-stable curve without separating nodes, and therefore the canonical morphism $\phi_{|\omega_{C_i}|}$ is globally defined and finite, because $\omega_{C_i}$ is ample. Therefore the canonical morphism of $C$ does not contract any irreducible component of the subcurve $C_i$. 

Suppose now that $C_i$ is not $A_r$-stable but only prestable and let $\Gamma$ be an irreducible component where $\omega_{C_i}\vert_{\Gamma}$ is not ample, in particular $g_{\Gamma}\leq 1$. In this case, the canonical morphism is not helpful.

\begin{remark}
	Let $C$ be a ($A_1$)-stable genus $2$ curve such that it is constructed as intersecting two smooth genus $1$ curves in a separating node. One can prove easily that the canonical morphism is trivial. Similarly, if we have a genus $g$ stable curve constructed by intersecting smooth genus $1$ curves in separating nodes, the same is still true.
\end{remark}
 
The idea is to construct a variation of the canonical morphism which does not contract the component $\Gamma$ and it still commutes with the involution. The following lemma gives us a possible candidate.

\begin{lemma}\label{lem:var-can}
	Let $C_R\rightarrow \spec R$ be a generically smooth family of $A_r$-stable genus $g$ curves over $R$, a discrete valuation ring, $p \in C_k$ be a separating node of the special fiber of the family and $C_1$ and $C_2$ be the two subcurves of $C_k$ such that $C_k=C_1\cup C_2$ and $C_1\cap C_2=\{p\}$.  Then there exists an integer $m$ such that the followings are true:
	\begin{itemize}
		\item[(i)] $mC_1$ and $mC_2$ are Cartier divisors of $C_R$,
		\item[(ii)] (if we denote by $\cO(mC_1)$ and $\cO(mC_2)$ the induced line bundles) we have $\cO(mC_2)\vert_{C_1}=\cO_{C_1}(p)$  and $\cO(mC_2) \vert_{C_2}=\cO_{C_2}(-p)$, \item[(iii)] the line bundle $\omega_{C_R}(mC_i)$ verifies base change on $\spec R$.
	\end{itemize}
\end{lemma}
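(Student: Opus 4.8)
\emph{Proof proposal.} The plan is to take $m$ to be the ``thickening order'' of the separating node $p$ inside the total space $C_R$. Since the family is generically smooth and $A_r$-prestable curves are l.c.i., $C_R$ is a normal surface: it is Cohen--Macaulay (being l.c.i.\ over $R$) and regular in codimension one (the generic fibre is smooth and the special fibre is reduced), so Serre's criterion applies. At $p$ the fibre is a node, hence $C_R$ is a hypersurface there; completing and using that $R$ is a complete discrete valuation ring with uniformizer $t$ gives $\widehat\cO_{C_R,p}\cong R[[x,y]]/(xy-t^a)$ for a unique integer $a$ with $1\le a<\infty$ (finiteness because $C_K$ is smooth at $p$), under which the branches $C_1,C_2$ are cut out by $x$ and $y$. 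I would set $m:=a$. For (i), a valuation computation in this local ring gives $\operatorname{div}(x)=m\,[C_1]$ and $\operatorname{div}(y)=m\,[C_2]$ locally at $p$, so $mC_1$ and $mC_2$ are Cartier at $p$; away from $p$ the subcurves $C_1,C_2$ are disjoint and each agrees on a neighbourhood with the reduced special fibre $C_k=\operatorname{div}(t)$, hence is Cartier there too. As Cartierness is local on the normal scheme $C_R$ this proves (i), and moreover $mC_1+mC_2=m\operatorname{div}(t)=\operatorname{div}(t^m)$ is principal, so $\cO(mC_2)\cong\cO(-mC_1)$.

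For (ii) I would argue locally. Away from $p$, $mC_2$ and $C_1$ are disjoint, while near $p$ one has $\cO(mC_2)=y^{-1}\cO_{C_R}$; restricting to $C_1=\{x=t=0\}$, on which $y$ is a uniformizer, yields $\cO(mC_2)|_{C_1}\cong y^{-1}\cO_{C_1}=\cO_{C_1}(p)$. Since $C_2$ lies in the support of $mC_2$, the restriction to $C_2$ cannot be computed naively; instead I would use $\cO(mC_2)\cong\cO(-mC_1)$ together with the local identity $\cO(mC_1)=x^{-1}\cO_{C_R}$ near $p$, restrict to $C_2=\{y=t=0\}$ (on which $x$ is a uniformizer) to obtain $\cO(mC_1)|_{C_2}\cong\cO_{C_2}(p)$, and conclude $\cO(mC_2)|_{C_2}\cong\cO_{C_2}(-p)$.

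For (iii), since $C_2$ sits in the special fibre we have $\cO(mC_2)|_{C_K}\cong\cO_{C_K}$, so $\omega_{C_R}(mC_2)|_{C_K}\cong\omega_{C_K}$ has $h^0=g$. On the special fibre, adjunction at the separating node gives $\omega_{C_R}|_{C_1}\cong\omega_{C_1}(p)$ and $\omega_{C_R}|_{C_2}\cong\omega_{C_2}(p)$, so by (ii) the invertible sheaf $\cF:=\omega_{C_R}(mC_2)|_{C_k}$ restricts to $\omega_{C_1}(2p)$ on $C_1$ and to $\omega_{C_2}$ on $C_2$. I would feed this into the exact sequence $0\to\cF\to(\cF|_{C_1})\oplus(\cF|_{C_2})\to k(p)\to 0$ attached to the splitting $C_k=C_1\cup_pC_2$: the map to $k(p)$ is surjective because $\omega_{C_1}(2p)$ has a global section with a genuine double pole at $p$ (indeed $h^0(\omega_{C_1}(2p))=g_1+1>g_1=h^0(\omega_{C_1}(p))$), hence $h^0(C_k,\cF)=h^0(\omega_{C_1}(2p))+h^0(\omega_{C_2})-1=(g_1+1)+g_2-1=g$, using $g=g_1+g_2$ for a separating node (\Cref{rem:genus-count}). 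Thus the fibrewise $h^0$ of $\omega_{C_R}(mC_2)$ is the constant $g$ over the reduced scheme $\spec R$, so by Grauert's theorem $\pi_*\omega_{C_R}(mC_2)$ is locally free and its formation commutes with base change, i.e.\ $\omega_{C_R}(mC_2)$ verifies base change; the case of $\omega_{C_R}(mC_1)$ follows by interchanging $C_1$ and $C_2$.

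The hard part will be (iii): one must keep track of the fact that $C_2$ lies in the support of $mC_2$, so all the restrictions have to pass through the local normal form, and one must rule out a jump of $h^0$ on the special fibre --- which is exactly what the surjectivity onto $k(p)$ delivers, and which is the structural reason why the correct twist is by $2p$ rather than by $p$. Setting up the local model $R[[x,y]]/(xy-t^a)$ cleanly (in particular observing that $m=a$ is forced by (i)--(ii)) is the other point requiring care, but is routine given the l.c.i.\ property and normality of $C_R$.
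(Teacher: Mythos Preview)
Your proposal is correct and follows essentially the same route as the paper: the local model $xy-t^m$ (equivalently $y^2+x^2+t^m$) at the node gives (i), the relation $\cO(mC_1)\otimes\cO(mC_2)\cong\cO(-\mathrm{div}(t^m))$ gives (ii), and (iii) is obtained via Grauert by showing $h^0$ is constantly $g$ using the short exact sequence for $C_k=C_1\cup_p C_2$ and the surjectivity of evaluation at $p$ for $\omega(2p)$. One cosmetic slip: the lemma as stated does not assume $R$ complete, so your line ``using that $R$ is a complete discrete valuation ring'' should be replaced by simply passing to $\widehat{\cO}_{C_R,p}$ (which lives over $\widehat R$, still a DVR with the same uniformizer); the rest of your argument is unaffected.
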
 

\begin{proof}
	The existence of an integer $m$ that verifies (i) follows from the theory of Du Val singularities, as a separating node in a normal surface \'etale locally is of the form $y^2+x^2+t^m$ where $t$ is a uniformizer of the DVR. An \'etale-local computation shows that $\cO(mC_1)\vert_{C_2}=\cO_{C_2}(p)$ and because as line bundles $\cO(-mC_1-mC_2)$ is the pullback from the DVR of the ideal $(t^m)$ we get $\cO(mC_2)\vert_{C_2}=\cO_{C_2}(-p)$. 
	
	Because $R$ is reduced, to prove $(iii)$ it is enough to prove that $$\dim_k\H^0(C_k,\omega_{C/R}(mC_i)\vert_{C_k})=g$$
	and then use Grauert's theorem. 
	Without loss of generality, we can suppose $i=1$.
	Let us denote by $\cL$ the line bundle $\omega_{C/R}(mC_1)$ restricted to $C_k$. Then we have the usual exact sequence
	$$0\rightarrow \H^0(C_k,\cL)\rightarrow \H^0(C_1,\cL\vert_{C_1})\oplus \H^0(C_2,\cL\vert_{C_2}) \rightarrow \cL(p)$$ 
	where we have that $\cL\vert_{C_1}\simeq \omega_{C_1}$ while $\cL\vert_{C_2}\simeq \omega_{C_2}(2p)$. Because $\omega_{C_2}(2p)$ is globally generated in $p$, we get that $$h^0(\cL)=h^0(\omega_{C_1})+h^0(\omega_{C_2}(2p))-1=h^0(\omega_{C_1})+h^0(\omega_{C_2})=g_1+g_2$$ where $g_i$ is the arithmetic genus of $C_i$ for $i=1,2$. Because $p$ is a separating node, thus $g_1+g_2=g$ and we are done.
	
\end{proof}

\begin{remark}
In the situation of \Cref{lem:var-can}, the involution $\sigma'$ commutes with the rational map associated with the complete linear system of $\omega_{C_R}(mC_i)$ for $i=1,2$. This follows from the fact that $\omega_{C_R}(mC_i)$ restrict to the canonical line bundle in the generic fiber. 
\end{remark}

  If $g_{\Gamma}=1$, then $C_i=\Gamma$ (because it is connected). If $g_{\Gamma}=0$, we have that the intersection $\Gamma \cap (C_i-\Gamma)$ has length $2$ with $C_i-\Gamma$ connected subcurve of $C_i$ (because $g_{C_i}>0$). Let $p$ be a separating node in $\Gamma \cap (C-\Gamma)$. We apply \Cref{lem:var-can} and let us denote by $C_1$ the subcurve containing $\Gamma$. For the sake of notation, we define $\cL_{\Gamma}:=\omega_{C_R}(mC_2)$. 

\begin{proposition}
	In the situation above, the morphism $\phi_{\cL_{\Gamma}}$ induced by the complete linear system of $\cL_{\Gamma}$ does not contract $\Gamma$. 
\end{proposition}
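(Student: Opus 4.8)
The plan is to descend to the special fibre $C_k$. Since $\cL_\Gamma=\omega_{C_R}(mC_2)$ satisfies base change by \Cref{lem:var-can}(iii), we have $\H^0(C_R,\cL_\Gamma)\otimes_R k\simeq\H^0(C_k,\cL_\Gamma\vert_{C_k})$, so $\phi_{\cL_\Gamma}$ contracts $\Gamma$ exactly when the complete linear system $|\cL_\Gamma\vert_{C_k}|$ does. Combining \Cref{lem:var-can}(ii) with $\omega_{C_R}\vert_{C_k}\simeq\omega_{C_k}$ one finds $\cL_\Gamma\vert_{C_1}\simeq\omega_{C_1}(2p)$ and $\cL_\Gamma\vert_{C_2}\simeq\omega_{C_2}$, where $\omega_{C_j}$ denotes the intrinsic dualizing sheaf of the subcurve $C_j$ and $p$ is a smooth point of $C_1$ lying on $\Gamma$; in particular $\cL_\Gamma\vert_\Gamma\simeq\omega_{C_1}(2p)\vert_\Gamma$.

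First I would reduce the problem from $C_k$ to $C_1$. From the Mayer--Vietoris sequence $0\to\cL_\Gamma\vert_{C_k}\to\cL_\Gamma\vert_{C_1}\oplus\cL_\Gamma\vert_{C_2}\to k(p)\to0$ one sees that $\H^0(C_k,\cL_\Gamma\vert_{C_k})\to\H^0(C_1,\omega_{C_1}(2p))$ is surjective as soon as $\cL_\Gamma\vert_{C_2}=\omega_{C_2}$ is globally generated at $p$, i.e. $h^0(\cO_{C_2}(p))=1$. But $h^0(\cO_{C_2}(p))=2$ would force the component of $C_2$ through $p$ to be a genus-$0$ curve meeting the rest of $C_k$ only in separating nodes --- that is, $p$ would be a point of type $(2)$ as in \Cref{prop:base-point-can} --- and this is excluded by \Cref{lem:not-poss-comp}. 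Hence it suffices to prove that $|\omega_{C_1}(2p)|$ does not contract $\Gamma$, equivalently that the restriction $r\colon\H^0(C_1,\omega_{C_1}(2p))\to\H^0(\Gamma,\cL_\Gamma\vert_\Gamma)$ has image of dimension at least $2$.

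For this I would run an explicit count. Because $\Gamma$ lies in the piece $C_i$ of the $A_1$-separating decomposition of $C_k$ --- which has no separating node and satisfies $g_{C_i}>0$ --- the hypotheses $g_\Gamma\le1$ and $\omega_{C_i}\vert_\Gamma$ not ample force $\deg(\omega_{C_i}\vert_\Gamma)=0$: indeed if $g_\Gamma=1$ then $\Gamma=C_i$, while if $g_\Gamma=0$ then $\Gamma\cap(C_i-\Gamma)$ has length exactly $2$, length $1$ being impossible as it would produce a separating node of $C_i$. Put $D:=C_1-\Gamma$, $E:=\Gamma\cap D$ and let $n_D$ be the number of connected components of $D$. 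Then $\deg(\cL_\Gamma\vert_\Gamma)=\deg(\omega_{C_1}\vert_\Gamma)+2=2g_\Gamma+\deg E$, so $h^0(\Gamma,\cL_\Gamma\vert_\Gamma)=g_\Gamma+\deg E+1$; also $h^1(C_1,\omega_{C_1}(2p))=0$, the kernel sheaf of $\omega_{C_1}(2p)\twoheadrightarrow\iota_{\Gamma,*}(\cL_\Gamma\vert_\Gamma)$ is $\iota_{D,*}\omega_D$ (using $p\notin D$ and $\omega_{C_1}\vert_D\simeq\omega_D(E)$), and a Mayer--Vietoris computation for $C_1=\Gamma\cup D$ gives $\dim\im{r}=h^0(C_1,\omega_{C_1}(2p))-h^0(D,\omega_D)=g_\Gamma+\deg E-n_D+1$. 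Finally $\deg E\ge n_D$, since each connected component of $D$ meets $\Gamma$, and equality together with $g_\Gamma=0$ is impossible: it would force $C_i-\Gamma$ to be either two disjoint subcurves each meeting $\Gamma$ in a single node (a separating node of $C_i$) or a single subcurve meeting $\Gamma$ in a length-$2$ divisor (so that that component of $D$ meets $\Gamma$ in length $>1$, contradicting $\deg E=n_D$). Hence $\dim\im{r}\ge2$, and since $\deg(\cL_\Gamma\vert_\Gamma)>0$ the restricted linear system defines a nonconstant (indeed finite) map on $\Gamma$, so $\phi_{\cL_\Gamma}$ does not contract $\Gamma$.

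The main obstacle is this last count: one has to check that the extra $2p$ twist in $\cL_\Gamma\vert_{C_1}$ buys exactly enough positivity along $\Gamma$. This is where the two structural inputs enter essentially --- that $C_i$ has no separating node (which controls $C_i-\Gamma$ and rules out the bad equality case) and that $C_k$ has no point of type $(2)$ (\Cref{lem:not-poss-comp}, used both for the surjectivity onto $\H^0(C_1,\omega_{C_1}(2p))$ and implicitly in the genus-$0$ analysis) --- and one must handle tacnodal intersections, where the relevant intersection divisors have length $2$ concentrated at a single point.
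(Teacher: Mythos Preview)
Your proof is correct and reaches the same conclusion as the paper, but the route is genuinely different and in some ways cleaner.

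The paper splits into the two cases $g_\Gamma=1$ and $g_\Gamma=0$ and in each case \emph{identifies} the image of the restriction $\H^0(C_k,\cL_\Gamma)\to\H^0(\Gamma,\cL_\Gamma\vert_\Gamma)$ with an explicit linear system on $\Gamma$: for $g_\Gamma=1$ it is $\H^0(\Gamma,\omega_\Gamma(2p))$, and for $g_\Gamma=0$ it is the kernel of a residue map ${\rm res}_D(2p)$ coming from the length-$2$ intersection $\Gamma\cap(C_i-\Gamma)$. One then checks directly that these concrete linear systems define non-constant maps.

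You instead run a uniform dimension count. After the same reduction to $C_1$ (via Mayer--Vietoris and \Cref{lem:char-type2}/\Cref{lem:not-poss-comp}), you identify the kernel of $\omega_{C_1}(2p)\twoheadrightarrow\cL_\Gamma\vert_\Gamma$ with $\iota_{D,*}\omega_D$ --- this is the key computation, and it is correct also at tacnodal points of $\Gamma\cap D$, where $\cI_\Gamma\simeq\iota_{D,*}\cI_{E/D}$ and adjunction gives $\omega_{C_1}\vert_D=\omega_D(E)$ with the right multiplicity. The resulting formula $\dim\im r=g_\Gamma+\deg E-n_D+1$ reduces everything to the combinatorial inequality $g_\Gamma+\deg E>n_D$, which you dispatch using that $C_i$ has no separating node and $C_i-\Gamma$ is connected of intersection length $2$. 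Since $\Gamma$ is integral, $\dim\im r\ge2$ already forces non-constancy (two independent sections give a non-constant rational function), so the extra hypothesis $\deg(\cL_\Gamma\vert_\Gamma)>0$ is not really needed.

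Your argument buys uniformity and avoids the residue bookkeeping; the paper's argument buys an explicit description of $\phi_{\cL_\Gamma}\vert_\Gamma$, which is more information than is strictly required. One small wording issue: in your final paragraph the ``two disjoint subcurves'' alternative for $C_i-\Gamma$ cannot occur, since $C_i-\Gamma$ is connected by hypothesis; the contradiction comes directly from $(C_i-\Gamma)\cap\Gamma\subset D_0\cap\Gamma$ having length $2$ inside something of length $1$.
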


\begin{proof} 
	We need to prove that the morphism $\phi_{\cL_{\Gamma}}$ restricted to the special fiber does not contract $\Gamma$. Using the description of $\H^0(C_k,\cL_{\Gamma}\vert_{C_k})$ of \Cref{lem:var-can}, we can prove an explicit description of $\phi_{\cL_{\Gamma}}$ in both cases. 
	
	If $g_{\Gamma}=1$, then the restriction of $\phi_{\cL_{\Gamma}}$ to $\Gamma$ is the map induced by the vector space $\H^0(\Gamma,\omega_{\Gamma}(2p))$ inside the complete linear system of $\omega_{\Gamma}(2p + (\Sigma_{\Gamma}-p))$ where $\Sigma_{\Gamma}$ is the Cartier divisor on $\Gamma$ associated with the intersection $\Gamma \cap (C-\Gamma)$. We are using that there are no points of type $(2)$ as in \Cref{prop:base-point-can}, which implies that the morphism 
	$$ \H^0(C,\phi_{\cL}) \rightarrow \H^0(\Gamma,\phi_{\cL_{\Gamma}})$$
	is surjective. Similarly as we have done before, we have that $\phi_{\cL_{\Gamma}}\vert_{\Gamma}$ is defined everywhere except in support of the Cartier divisor $\Sigma_{\Gamma}-p$ and it coincides with $\omega_{\Gamma}(2p)$ where it is defined . This implies $\Gamma$ is not contracted to a point. 
	
	Finally, if $g_{\Gamma}=0$, let $D$ be the Cartier divisor on $\Gamma$ associated with the intersection $\Gamma\cap(C_i-\Gamma)$ of length $2$ and by ${\rm res}_D$ one of the following morphisms: 
	\begin{itemize}
		\item ${\rm res}_{q_1}+{\rm res}_{q_2}$ when $D=q_1+q_2$, 
		\item ${\rm res}_q$ when $D=2q$.
	\end{itemize}
	We have that $\phi_{\cL_{\Gamma}}$ restricted to $\Gamma$ is the morphism induced by kernel $V$ of 
	$$ {\rm res}_{D}(2p+(\Sigma-p)):\H^0(\Gamma,\omega_{\Gamma}(2p+(\Sigma_{\Gamma}-p)+D)) \longrightarrow k$$
    where the morphism of vector spaces ${\rm res}_{D}(2p+(\Sigma-p))$ is the tensor of the morphism ${\rm res}_D$ by the line bundle $\cO(2p+(\Sigma-p))$. This again is a conseguence of \Cref{lem:not-poss-comp}. As before, we can prove that $\phi_V$ is defined everywhere except in the support of $\Sigma-p$ and it coincides with the morphism associated with the kernel $W$ of the morphism 
    $$ {\rm res}_{D}(2p):\H^0(\Gamma,\omega_{\Gamma}(2p+D)) \longrightarrow k$$
	where the morphism of vector spaces ${\rm res}_{D}(2p)$ is the tensor of the morphism ${\rm res}_D$ by the line bundle $\cO(2p)$. A straightforward computation shows that $\phi_W$ does not contract $\Gamma$.
\end{proof}

\begin{corollary}
 In the situation above, $\sigma'$ does not contract $\Gamma$.
\end{corollary}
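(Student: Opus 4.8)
The plan is to run, verbatim, the argument that was used for the components on which the canonical morphism is already defined and non‑contracting, with $\phi_{\cL_{\Gamma}}$ playing the role of the canonical map. First I would recall from the remark following \Cref{lem:var-can} that $\sigma'$ commutes with the rational map $\phi_{\cL_{\Gamma}}$ on the generic fibre $C_K$: indeed $\cL_{\Gamma}=\omega_{C_R}(mC_2)$ restricts to $\omega_{C_K}$ on the generic fibre, and by \Cref{lem:can-com} the canonical morphism of the smooth hyperelliptic curve $C_K$ factors through the hyperelliptic quotient, hence is $\sigma_K$-invariant. Since $C_R$ is reduced and separated and $\sigma'$ is defined on an open $U$ with $\codim(C_R\setminus U)\geq 2$, the two rational maps $\phi_{\cL_{\Gamma}}\circ\sigma'$ and (the composite of $\phi_{\cL_{\Gamma}}$ with the induced automorphism of the target) agree on a dense open of $C_R$, hence agree wherever both are defined — exactly the same reduction that was made for the canonical map.

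Next I would restrict everything to the special fibre. As $\Gamma$ is a divisor in the normal surface $C_R$, its generic point lies in $U$, so $\sigma'$ is defined at the generic point of $\Gamma$; by the previous proposition $\phi_{\cL_{\Gamma}}$ is defined on a dense open of $\Gamma$ (its indeterminacy locus on $\Gamma$ being the finite support of $\Sigma_{\Gamma}-p$) and does not contract $\Gamma$. Hence on a common dense open of $\Gamma$ we have $\phi_{\cL_{\Gamma}}\circ\sigma'=\widetilde{\psi}\circ\phi_{\cL_{\Gamma}}$, where $\widetilde{\psi}$ is the automorphism of the image induced by $\sigma'$. If $\sigma'$ contracted $\Gamma$ to a point, the left‑hand side would be constant on $\Gamma$, while the right‑hand side would be $\widetilde{\psi}$ applied to the non‑constant map $\phi_{\cL_{\Gamma}}\vert_{\Gamma}$, a contradiction. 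Therefore $\sigma'$ does not contract $\Gamma$; together with the earlier analysis of the $\omega$-ample components of the $A_1$-separating decomposition, this shows $\sigma'$ contracts no one‑dimensional subscheme of $C_R$, and \Cref{lem:contract} then produces the regular hyperelliptic involution $\sigma_R$, finishing \Cref{theo:univ-closed}.

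The step I expect to be the main obstacle is the bookkeeping hidden in the word "commute'': one must check that $\phi_{\cL_{\Gamma}}$ genuinely descends through the rational quotient $C_R\dashrightarrow C_R/\sigma'$ on the relevant open, so that $\widetilde{\psi}$ really is an automorphism of the image rather than an uncontrolled map, and that the indeterminacy loci of $\sigma'$ and of $\phi_{\cL_{\Gamma}}$ together do not cover all of $\Gamma$. Both points follow from the codimension‑two bound on the indeterminacy of $\sigma'$ and from the explicit (finite) base‑locus description obtained in the preceding proposition, but they should be spelled out carefully since this is the only place where the normality of $C_R$ and the precise shape of the linear system $\cL_{\Gamma}$ are simultaneously used.
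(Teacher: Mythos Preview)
Your proposal is correct and follows essentially the same route as the paper: the paper's proof is the one-line observation that $\phi_{\cL_{\Gamma}}\circ\sigma'=\phi_{\cL_{\Gamma}}$ and $\phi_{\cL_{\Gamma}}$ does not contract $\Gamma$. The only difference is that you introduce an auxiliary automorphism $\widetilde{\psi}$ of the image and then worry about its well-definedness, whereas the paper notes (in the remark after \Cref{lem:var-can}) that $\cL_{\Gamma}$ restricts to $\omega_{C_K}$ on the generic fibre, so $\phi_{\cL_{\Gamma}}$ is literally $\sigma'$-invariant there and hence everywhere it is defined; your $\widetilde{\psi}$ is simply the identity, and the bookkeeping concern in your last paragraph dissolves.
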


\begin{proof}
 This follows because $\phi_{\cL_{\Gamma}} \circ \sigma' = \phi_{\cL_{\Gamma}}$ and $\phi_{\cL_{\Gamma}}$ does not contract $\Gamma$.
\end{proof}

	We have assumed that there are no points of type $(2)$ as in \Cref{prop:base-point-can}. The last lemma explain why this assumption can be made. We decided to leave it as the last lemma because it is the only statement where we use a result for the $A_1$-stable case which we cannot prove indipendently for the $A_r$-stable case. ´
	
	\begin{lemma}\label{lem:not-poss-comp}
		Let $C\rightarrow \spec R$ be an $A_r$-stable curve of genus $g$ and suppose the generic fiber is smooth and hyperelliptic. Then if $C_k$ is the special fiber, there are no point $p\in C_k$ of type $(2)$ as in \Cref{prop:base-point-can}. 
	\end{lemma}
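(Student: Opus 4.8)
The plan is to reduce to the Deligne--Mumford stable case, where the properness of $\Hbar_g\to\Mbar_g$ is available, and then carry out a combinatorial analysis on the stable hyperelliptic special fibre. First I would do a harmless base change: a finite (necessarily totally ramified) extension $R\to R'$ replaces $C_R$ by $C_R\times_R R'$ without changing the special fibre $C_k$ (the residue field stays $k$), so it does not affect the statement. After such a base change we may assume that the generic fibre $C_K$ (still smooth and hyperelliptic) has a DM-stable model $\overline C_R\to\spec R$, and that there is a proper birational $R$-morphism $c\colon \overline C_R\to C_R$ which is an isomorphism over the generic fibre and over an open neighbourhood of the smooth and nodal locus of $C_k$, while on special fibres $c_k\colon\overline C_k\to C_k$ contracts the trees of rational curves lying over the non-nodal $A_r$-singularities of $C_k$ (this is standard; see \cite{PerTesi}). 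By the properness of $\Hbar_g\to\Mbar_g$ applied to the smooth hyperelliptic $C_K$, the model $\overline C_R$ is a family of stable hyperelliptic curves, so $\overline C_k$ carries a hyperelliptic involution $\tau$ with \emph{finite} fixed locus and with nodal genus-$0$ quotient $Z$.

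Now suppose for contradiction that $C_k$ has a point $p$ of type $(2)$ as in \Cref{prop:base-point-can}, lying on an irreducible component $\Gamma$ of arithmetic genus $0$ meeting $C_k-\Gamma$ only in separating nodes. An integral projective genus-$0$ curve over $k$ is $\PP^1$, so $\Gamma\cong\PP^1$ is smooth; since $\Gamma$ has no internal singularities and meets the rest of $C_k$ only in nodes, $c$ is an isomorphism over a neighbourhood of $\Gamma$, and contracting connected rational trees away from the $\tilde q_i$ preserves separatedness of nodes. Hence $\overline C_k$ contains a component $\tilde\Gamma\cong\PP^1$ meeting $\overline C_k-\tilde\Gamma$ only in separating nodes $\tilde q_1,\dots,\tilde q_m$ with $m\geq 3$ (by stability of $\overline C_k$, since these are all the special points of $\tilde\Gamma$). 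Write $\overline C_k=\tilde\Gamma\cup\bigcup_{i=1}^m\Delta_i$ with $\Delta_i$ pairwise disjoint connected subcurves and $\Delta_i\cap\tilde\Gamma=\{\tilde q_i\}$; it remains to rule this out inside the stable hyperelliptic curve $\overline C_k$.

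By \Cref{prop:descr-inv} together with \Cref{rem:fix-locus}, a hyperelliptic involution acts on the completed local ring of each node preserving its two branches (the branch-exchanging case $(c_3)$ has infinite fixed locus). Consequently $\tau$ never interchanges two distinct components meeting at a node, and whenever $\tau$ fixes a separating node it fixes the component on each of its two sides. Two cases arise. If $\tau(\tilde\Gamma)=\tilde\Gamma$, then each $\tilde q_i$ is $\tau$-fixed, so $\tau|_{\tilde\Gamma}\in\aut(\PP^1)$ fixes $m\geq 3$ points and is therefore the identity; but then $\tilde\Gamma$ lies in the fixed locus of $\tau$, contradicting finiteness. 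If $\tau(\tilde\Gamma)=\tilde\Gamma'\neq\tilde\Gamma$, then $\tilde\Gamma\cap\tilde\Gamma'=\emptyset$ by the no-branch-exchange property, so $\tilde\Gamma'$ lies in a unique $\Delta_{i_0}$; since $\tau(\Delta_i)$ is connected, disjoint from $\tilde\Gamma$, and meets $\Delta_{i_0}$ (it contains $\tau(\tilde q_i)\in\tilde\Gamma'$), one gets $\tau(\Delta_i)\subseteq\Delta_{i_0}$ for all $i\neq i_0$ as well, whence $\overline C_k=\Delta_{i_0}\cup\tau(\Delta_{i_0})$. Then the restriction $\nu\colon\Delta_{i_0}\to Z$ of $\overline C_k\to Z$ is finite and surjective with $Z$ reduced, so $\cO_Z\hookrightarrow\nu_*\cO_{\Delta_{i_0}}$ gives $\chi(\cO_{\Delta_{i_0}})\geq\chi(\cO_Z)=1$, forcing $\Delta_{i_0}$ to have arithmetic genus $0$ and $\nu$ to be an isomorphism; this is absurd, because $\tilde q_{i_0}$ and $\tau(\tilde q_{i_0})$ are two distinct points of $\Delta_{i_0}$ (they are swapped by $\tau$ and cannot coincide, as a common value would lie in $\tilde\Gamma\cap\tilde\Gamma'=\emptyset$) with the same image in $Z$. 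This contradiction proves the lemma.

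The step I expect to be the main obstacle is the construction of the comparison morphism $c\colon\overline C_R\to C_R$ together with the verification that it is an isomorphism over a neighbourhood of $\Gamma$ — this is the one place where one genuinely leaves the hyperelliptic setting and invokes stable reduction for $\Mbar_g$ and $\Hbar_g$. Once the configuration is transported to $\overline C_k$, the remaining argument is a finite case check resting only on \Cref{prop:descr-inv}, \Cref{rem:fix-locus}, the rigidity of $\aut(\PP^1)$, and an Euler-characteristic comparison along the quotient map.
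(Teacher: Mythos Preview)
Your overall strategy matches the paper's: pass to the Deligne--Mumford stable model of the family (after a base change), use the properness of $\Hbar_g\to\Mbar_g$ to know that the stable special fibre $\overline C_k$ is hyperelliptic, and then argue that a stable hyperelliptic curve cannot contain a $\PP^1$ meeting the rest only in separating nodes. The paper simply asserts this last fact (it is part of the classical description of stable hyperelliptic curves), whereas you try to prove it by a case analysis on the involution. That extra step is where your argument breaks.

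The error is the sentence ``a hyperelliptic involution acts on the completed local ring of each node preserving its two branches (the branch-exchanging case $(c_3)$ has infinite fixed locus).'' You have the cases backwards. In $k[[x,y]]/(y^2-x^2)$ the two branches are $(y-x)$ and $(y+x)$. Under $(c_3)$ one has $y-x\mapsto x-y$, so each branch ideal is sent to itself: $(c_3)$ \emph{preserves} the branches (and fixes one of them pointwise, which is why it is excluded). Under $(c_1)$ one has $y-x\mapsto -y-x$, so $(y-x)\leftrightarrow(y+x)$: it is $(c_1)$ that \emph{exchanges} the branches, and by the corollary following \Cref{prop:descr-inv} its fixed locus has length $2$, so it is perfectly compatible with a hyperelliptic involution. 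Branch-swapping at a node is in fact ubiquitous in stable hyperelliptic curves: whenever two components are interchanged by $\sigma$ and meet, the local picture at each intersection point is $(c_1)$ (cf.\ \Cref{lem:exist-decomposition} and \Cref{rem:part-case}). Consequently your deductions ``$\tau$ never interchanges two distinct components meeting at a node'' and ``$\tilde\Gamma\cap\tilde\Gamma'=\emptyset$'' are unjustified, and both your Case~1 and Case~2 collapse.

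The combinatorial step can be repaired, but with different inputs. Stability of $\overline C_k$ forces $g(\Delta_i)\geq 1$ for every $i$ (a genus-$0$ subcurve attached at a single node would have an unstable leaf). In the case $\tau(\tilde\Gamma)=\tilde\Gamma$ with $\tau|_{\tilde\Gamma}\neq\id$, if $\tau$ swaps $\tilde q_i$ and $\tilde q_j$ then it swaps $\Delta_i$ and $\Delta_j$, and their common image in $Z$ is isomorphic to $\Delta_i$, contradicting $g(Z)=0$; hence all $\tilde q_i$ are fixed and $m\leq 2$. In the case $\tau(\tilde\Gamma)=\tilde\Gamma'\neq\tilde\Gamma$ with $\tilde\Gamma'\subset\Delta_{i_0}$, pick any $j\neq i_0$: as you show, $\tau(\Delta_j)\subset\Delta_{i_0}$, but $g(\Delta_j)\geq 1$ forces $\dim(\Delta_j\cap\tau(\Delta_j))=1$ by \Cref{lem:subcurve}, contradicting $\Delta_j\cap\Delta_{i_0}=\emptyset$. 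This is the argument you should give; the Euler-characteristic computation at the end of your Case~2 is not needed (and, as written, does not control $\Delta_{i_0}\cap\tau(\Delta_{i_0})$ well enough to conclude that $\nu$ is birational).
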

\begin{proof} 
	The proof relies on the fact that the stable reduction for the $A_n$-singularities is well-know, see \cite{Hass} and \cite{CasMarLaz}. Let $C_K$ be the generic fiber, then we know that there exists, a family $\widetilde{C}$ such that the special fiber $\widetilde{C}_k$ is stable. The explicit description of the stable reduction implies that if there is a component $\Gamma$ in $C_k$ of genus $0$ intersecting the rest of curve only in separating nodes, then the same component should appear in the stable reduction. But this is absurd, as the stable limit of a smooth hyperelliptic curve is a stable hyperelliptic curve, which does not have such irreducible component.
\end{proof}

\bibliographystyle{plain}
\bibliography{Bibliografia}

@article{ArVis,
	Author={Arsie, A. and Vistoli, A.},
	Title={{Stacks of cyclic covers of projective spaces}},
	Year={2004},
	Journal={Compos. Math. 140, no.3, 647-666}	,
}

@article{EdGra,
	Author={Edidin, D. and Graham, W.},
	Title={{ Equivariant intersection theory}},
	Year={1998},
	Journal={Inv. math. 131, 595-634},	
}

@article{Lar,
	Author={Larson, E.},
	Title={{The integral Chow ring of $\overline{\mathscr{M}}_2$}},
	Year={2019},
	Journal={arXiv:1904.08081},
}

@article{Mum,
	Author={Mumford, D.},
	Title={{ Towards an Enumerative Geometry of the Moduli Space of Curves }},
	Year={1983},
	Journal={ Artin M., Tate J. (eds) Arithmetic and Geometry. Progress in Mathematics, vol 36. Birkh\"auser, Boston, MA},
}

@article{Cat,
author={Catanese, F.},
title={{Pluricanonical Gorenstein Curves}},
journal={Enumerative Geometry and Classical Algebraic Geometry},
year={1982},
publisher={Birkh{\"a}user Boston},
address={Boston, MA},
pages={51--95},
isbn={978-1-4684-6726-0},
doi={10.1007/978-1-4684-6726-0_4},
url={https://doi.org/10.1007/978-1-4684-6726-0_4}
}

@article{Rydh,
author = {Rydh, D.},
year = {2007},
pages = {},
title = {{Existence and properties of geometric quotients}},
volume = {22},
journal = {Journal of Algebraic Geometry},
doi = {10.1090/S1056-3911-2013-00615-3},
}

@misc{DiLorPerVis,
  doi = {10.48550/ARXIV.2108.03680},
  url = {https://arxiv.org/abs/2108.03680},
  author = {Di Lorenzo, A. and Pernice, M. and Vistoli, A.},
  keywords = {Algebraic Geometry (math.AG), FOS: Mathematics, FOS: Mathematics, 14D23, 14C15, 14H10},
  title = {{Stable cuspidal curves and the integral Chow ring of $\overline{\mathscr{M}}_{2,1}$}},
  publisher = {arXiv},
  year = {2021},
}

@article{Hass,
author = {Hasset, B.},
doi = {doi:10.1515/crll.2000.020},
url = {https://doi.org/10.1515/crll.2000.020},
title = {{Local stable reduction of plane curve singularities}},
number = {520},
volume = {2000},
year = {2000},
pages = {169--194},
}

@article{CasMarLaz,
author = {Casalaina-Martin, S. and Laza, R.},
year = {2010},
title = {{Simultaneous semi-stable reduction for curves with ADE singularities}},
volume = {365},
journal = {Transactions of the American Mathematical Society},
doi = {10.1090/S0002-9947-2012-05579-6},
}

@article{AbOlVis,
author = {Abramovich, D. and Olsson, M. and Vistoli, A.},
year = {2011},
title = {{Twisted stable maps to tame Artin stacks}},
volume = {20},
pages = {399-477},
journal = {Journal of Algebraic Geometry},
doi = {https://doi.org/10.1090/S1056-3911-2010-00569-3},
}

@article{AbGrVis,
 ISSN = {00029327, 10806377},
 URL = {http://www.jstor.org/stable/40068158},
 author = {Abramovich, D. and Graber, T. and Vistoli,A.},
 journal = {American Journal of Mathematics},
 number = {5},
 pages = {1337--1398},
 publisher = {Johns Hopkins University Press},
 title = {{Gromov-Witten Theory of Deligne-Mumford Stacks}},
 volume = {130},
 year = {2008},
}

@article{DiLorVis,
  doi = {10.48550/ARXIV.2103.13204},
   url = {https://arxiv.org/abs/2103.13204},
   author = {Di Lorenzo, A. and Vistoli, A.},
  title = {{Polarized twisted conics and moduli of stable curves of genus two}},
  year = {2021},
   }

@article{Fab,
author = {Faber, C.},
journal = {Annals of Mathematics},
pages = {331-419},
title = {{Chow Rings of Moduli Spaces of Curves I: The Chow Ring of $\overline{\mathcal M}_3$}},
url = {http://www.jstor.org/stable/1971525},
volume = {132},
year = {1990},
}

@article{Iza,
author={Izadi, E.},
title={{The Chow Ring of the Moduli Space of Curves of Genus 5}},
booktitle={The Moduli Space of Curves},
year={1995},
publisher={Birkh{\"a}user Boston},
address={Boston, MA},
pages={267--303},
}

@Article{PenVak,
  author  = {Penev, N. and Vakil, R.},
  title   = {{The Chow ring of the moduli space of curves of genus 6}},
  journal = {Algebraic Geometry},
  year    = {2013},
  volume  = {2},
  doi     = {10.14231/AG-2015-006},
}

@article{CanLar,
  doi = {10.48550/ARXIV.2104.05820},
  url = {https://arxiv.org/abs/2104.05820},
  author = {Canning, S. and Larson, H.},
  title = {{The Chow rings of the moduli spaces of curves of genus 7, 8, and 9}},
  publisher = {arXiv},
  year = {2021},
}

@Article{PerTesi,
  author  = {M. Pernice},
  title   = {{$A_r$-stable curves and the Chow ring of $\overline{\mathcal{M} }_3$}},
  journal = {arXiv},
  year    = {2022},
  url     = {https://arxiv.org/abs/2211.09793},
}

\end{document}